\newcommand{\bR}{{\mathbb R}}
\newcommand{\cK}{{\mathcal K}}
\newcommand{\cR}{{\mathcal R}}
\newcommand{\cH}{{\mathcal H}}
\newcommand{\cL}{{\mathcal L}}
\newcommand{\cO}{{\mathcal O}}
\newcommand{\uu}{\mathbf{ u}}
\newcommand{\vv}{\mathbf{ v}}
\newcommand{\ww}{\mathbf{ w}}
\newcommand{\ee}{\mathbf{ e}}
\newcommand{\Aa}{\mathbf{ a}}
\newcommand{\g}{\mathbf{ g}}
\newcommand{\e}{\varepsilon}
\newcommand{\kp}{\kappa}
\newcommand{\dist}{{\rm dist}}
\newcommand{\rdiv}{{\rm div\,}}
\newcommand{\ra}{\rightarrow}
\newcommand{\weak}{\rightharpoonup}
\newcommand\norm[1]{\Arrowvert {#1}\Arrowvert}
\newtheorem{theorem}{Theorem}[section]
\newtheorem{corollary}[theorem]{Corollary}
\newtheorem{definition}[theorem]{Definition}
\newtheorem{lemma}[theorem]{Lemma}
\newtheorem{proposition}[theorem]{Proposition}
\newtheorem{remark}[theorem]{Remark}
\def\Xint#1{\mathchoice
{\XXint\displaystyle\textstyle{#1}}%
{\XXint\textstyle\scriptstyle{#1}}%
{\XXint\scriptstyle\scriptscriptstyle{#1}}%
{\XXint\scriptscriptstyle\scriptscriptstyle{#1}}%
\!\int}
\def\XXint#1#2#3{{\setbox0=\hbox{$#1{#2#3}{\int}$ }
\vcenter{\hbox{$#2#3$ }}\kern-.6\wd0}}
\def\dashint{\Xint-}
\title []{A Minimization Problem with Free Boundary for $p$-Laplacian weakly coupled System}
\author{Morteza Fotouhi  and Henrik Shahgholian }
\address{Department of Mathematical Sciences, Sharif University of Technology, Tehran, Iran}
\email{fotouhi@sharif.edu}
\address{Department of Mathematics, Royal Institute of Technology, 100~44  Stockholm, Sweden}
\email{henriksh@kth.se}
\date{\today}
\begin{document}
    
\begin{abstract} 
In this paper we consider a weakly coupled $p$-Laplacian system of a Bernoulli type free boundary problem, through minimization of a corresponding functional.
We prove various properties of any local  minimizer and the corresponding   free boundary.
\end{abstract}

\subjclass{35R35}
\keywords{p-Laplacian, minimizers,  free boundary regularity, system}

\thanks{This project was carried out during the program Geometric aspects of nonlinear PDE at Institute Mittag Leffler, Stockholm, Sweden. H. Shahgholian was supported by Swedish Research Council. }

%
%
%

\maketitle


\section{Introduction}\label{introduction}

\subsection{Problem setting}
For  $\Omega\subset\bR^n$ ($n \geq 2$), we 
consider the problem of minimizing the functional
\begin{equation}\label{elliptic-system}
J(\uu)=\int_\Omega\sum_{i=1}^m|\nabla u^i|^p + Q^p\chi_{\{|\uu|>0\}}dx, \qquad 1< p<\infty,
\end{equation}
in the class of vectorial functions 
\[
\cK:=\{\uu=(u^1,\dots,u^m)\in W^{1,p}(\Omega;\bR^m): \uu=\g \text{ on }\partial\Omega\text{ and }u^i\geq0 \text{ for }i=1,\cdots,m\},
\]
with a given  boundary data  $\g\in W^{1,p}(\Omega;\bR^m)$. Here,   $Q$ is a  H\"older  function satisfying 
$$0<Q_{\min}\le Q \leq Q_{\max} < \infty , $$ 
for some constants $Q_{\min}$ and $Q_{\max}$.

We are interested in regularity properties of minimizers $\uu$, as well as the free boundary $\Gamma=\partial\{|\uu|>0\}\cap\Omega$.
Any local  minimizer   satisfies the $p$-Laplace equation
\[
\rdiv(|\nabla u^i|^{p-2}\nabla u^i)=0, \qquad \hbox{ in }\ \{u^i>0\},
\]
also denoted by $\Delta_p u^i = 0$.
In fact, $\Delta_p u^i$ is a nonnegative Radon measure with support on free boundary, $\Gamma$. The problem is to find a reasonable representation of this  measure and put it into some pde-context for further analysis.

This  problem is referred to  as {\it Bernoulli-type} free boundary problem, and is well studied in the literature, for 
the scalar case and for $p=2$, starting with  seminal work  of H.W. Alt and L.A. Caffarelli  \cite{alt1981existence}, and  also for any $1<p<\infty$ in \cite{danielli2005minimum}. 
There are  very few results for Bernoulli-type problems that involve systems (see \cite{caffarelli2018minimization, de2020improvement, mazzoleni2020regularity}). 
In \cite{caffarelli2018minimization}, the authors study the minimum problem \eqref{elliptic-system} for $p=2$ and show the smoothness of  the regular part of free boundary  as  well as some partial result for Hausdorff dimension of singular part. Indeed, they 
apply a reduction method to reduce the problem to its scalar counterpart and the same result for the scalar case can be extended to the vectorial problem. 
Also, a vectorial Bernoulli problem with no sign assumption on the components is studied in \cite{mazzoleni2020regularity}. 
In \cite{de2020improvement}, the same result has been obtained by the viscosity approach and improvement of flatness.

In this paper, we deal with a (weakly coupled) cooperative system for $p$-laplacian version of Bernoulli-type problem, following similar procedure as that in \cite{caffarelli2018minimization}.

\begin{remark}
    It should be remarked that our approaches  in this paper, with some extra efforts, can  be adapted  to variable exponent case, as well as variable coefficient one. Similar types of results are then expected.
\end{remark}

\subsection{Notation}

For clarity of exposition we shall introduce some notations and definitions, which  are used frequently in this text.

Throughout this paper,  $\bR^n$ will be equipped with the Euclidean inner product $x\cdot y$ and the induced norm $|x|$, $B_r(x_0)$ will denote the open $n$-dimensional ball with  center $x_0$, radius $r$ and its boundary with $\partial B_r(x_0)$. 
In addition, $B_r=B_r(0)$ and $\partial B_r=\partial B_r(0)$.
For the target space, $\bR^m$, we use several norms 
\[\begin{split}
&|\uu|_p=((u^1)^p+\dots+(u^m)^p)^{1/p},\\
&|\uu(x)|_\infty=\max_{1\le j\le m} |u^j(x)|.
\end{split}
\]
For convenience, we denote the Euclidean norm without the index, $|\uu|=|\uu|_2$.
We also use the Euclidean norm in the definition of $L^\infty$-norm, that is
\[
\norm{\uu}_{L^\infty}:=\sup_{x}|\uu(x)|.
\]

\subsection{Plan of the paper}
The paper is organized as follows: 
In Section \ref{sec:existence}, we study the existence of minimizer (Theorem \ref{Existence-minimizer}) 
and show that  minimizers are $p$-subharmonic (Lemma \ref{rem:coopertive-property}).
Section \ref{sec:regular-minimizer} is devoted to the regularity property of solutions, including the  H\"older regularity   (Lemma \ref{lem:Holder-regularity}) and  the Lipschitz regularity (Theorem \ref{thm:Liptschitz-regularity}).
Section \ref{sec:nondeg} consists of the proof of nondegeneracy property (Lemma \ref{lem:nondegeneracy}). 
Also, in Theorem \ref{thm:porosity} an estimate for the density of the  free boundary is obtained which is enough to prove that the free boundary has zero Lebesgue measure.
The vector-valued measure $\Delta_p\uu$ (Theorem \ref{thm:measure-laplacian}) and $(n-1)$-Hausdorff dimension of free boundary (Theorem \ref{thm:Hausdorf-measure-fb}) are discussed in Section \ref{sec:measure}.
The main result in Section \ref{sec:analysis} is the flatness of regular part of the free boundary (Theorem \ref{thm:identification-q}). 
We prove a partial result for the regularity of free boundary in Section \ref{sec:FB} (Theorem \ref{thm:reg-free-boundary}), along with   $C^{1,\alpha}$-regularity of the free boundary when $p$ is sufficiently close to 2 (Theorem \ref{thm:reg-free-boundary-2}). 
In Appendix \ref{sec:NTA} we deal with NTA properties of the free boundary.
Also, in Appendix B we present an auxiliary lemma to study the asymptotic behaviour of $p$-harmonic functions.

\section{Existence of a  minimizer}\label{sec:existence}
\begin{theorem}\label{Existence-minimizer}
If $J(\g)<\infty$, then there exists an absolute minimizer of $J$ over class $\cK$. 
\end{theorem}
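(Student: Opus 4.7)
The plan is the classical direct method of the calculus of variations. I would take a minimizing sequence $\uu_k\in\cK$, i.e.\ $J(\uu_k)\to \inf_{\cK}J$, which is finite since $J(\g)<\infty$. The bound on $J(\uu_k)$ gives an $L^p$-bound on $|\nabla u_k^i|$ for each component; combined with the fixed boundary data $\g$ and the Poincaré inequality applied to $\uu_k-\g\in W^{1,p}_0(\Omega;\bR^m)$, this yields uniform boundedness in $W^{1,p}(\Omega;\bR^m)$. Using reflexivity of $W^{1,p}$ for $1<p<\infty$, I pass to a subsequence (not relabeled) such that $\uu_k\weak\uu_\infty$ weakly in $W^{1,p}$, and by Rellich--Kondrachov $\uu_k\to\uu_\infty$ strongly in $L^p(\Omega;\bR^m)$ and pointwise a.e.

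Next I verify admissibility: the boundary trace is preserved under weak $W^{1,p}$-convergence (continuity of the trace operator), and nonnegativity of each $u_\infty^i$ follows from the a.e.\ limit of $u_k^i\ge 0$; thus $\uu_\infty\in\cK$.

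For lower semicontinuity of $J$, the Dirichlet part $\int_\Omega \sum_i|\nabla u^i|^p$ is weakly lower semicontinuous on $W^{1,p}$ by the standard convexity argument (each $\xi\mapsto|\xi|^p$ is convex, so Ioffe's theorem applies componentwise). For the potentially troublesome measure term, I exploit pointwise a.e.\ convergence: if $|\uu_\infty(x)|>0$ then $|\uu_k(x)|>0$ for all sufficiently large $k$, so
\[
\chi_{\{|\uu_\infty|>0\}}(x)\le\liminf_{k\to\infty}\chi_{\{|\uu_k|>0\}}(x)\quad\text{a.e.}
\]
Multiplying by the nonnegative $Q^p$ and applying Fatou's lemma delivers
\[
\int_\Omega Q^p\chi_{\{|\uu_\infty|>0\}}\,dx\le\liminf_{k\to\infty}\int_\Omega Q^p\chi_{\{|\uu_k|>0\}}\,dx.
\]
Adding the two inequalities gives $J(\uu_\infty)\le\liminf_k J(\uu_k)=\inf_\cK J$, hence $\uu_\infty$ is a minimizer.

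The only non-routine step is the semicontinuity of the free-volume term, since $\uu\mapsto\chi_{\{|\uu|>0\}}$ is not continuous in any of the usual norms; but the one-sided pointwise estimate above, which only needs a.e.\ convergence of the sequence itself (not its gradients), sidesteps this neatly via Fatou. Everything else reduces to standard functional-analytic compactness.
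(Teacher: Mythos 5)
Your proposal is correct and follows essentially the same route as the paper: direct method, weak $W^{1,p}$-compactness plus a.e.\ convergence of a minimizing sequence, weak lower semicontinuity of the gradient term, and a one-sided treatment of the volume term. In fact your Fatou/liminf handling of $\int_\Omega Q^p\chi_{\{|\uu_k|>0\}}$ is the more careful version of the paper's step (the paper asserts convergence of these integrals from a.e.\ convergence, whereas only the inequality $\chi_{\{|\uu_\infty|>0\}}\le\liminf_k\chi_{\{|\uu_k|>0\}}$ a.e.\ is guaranteed — and that is all that is needed).
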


\begin{proof}
Obviously  the functional is nonnegative, and hence it takes  an infimum value.
Let $\uu_k$ be a minimizing sequence
\[
\inf_{\vv\in\cK}J(\vv)=\lim_{k\ra\infty}J(\uu_k).
\]
Then $\uu_k-\g$ is bounded in $W^{1,p}_0(\Omega;\bR^m)$ and up to a subsequence we can assume that 
\[\begin{split}
 \uu_k\weak \uu, &\quad\text{ weakly in }W^{1,p}(\Omega;\bR^m),\\
 \uu_k\ra\uu, &\quad\text{ a.e. in }\Omega,
\end{split}\]
for some $\uu\in\cK$.
The latter convergence implies 
\[
\int_\Omega\chi_{\{|\uu_k|>0\}} dx \ra \int_\Omega\chi_{\{|\uu|>0\}} dx,
\]
and the weakly lower semicontinuity of the norm implies  that
\[
\int_\Omega\sum_{i=1}^m|\nabla u^i|^p dx \leq \liminf_{k\ra \infty}\int_\Omega\sum_{i=1}^m|\nabla u_k^i|^p dx.
\]
These together show that $J(\uu)\le \lim_{k\ra\infty} J(\uu_k)$ and hence  $\uu\in\cK$ is an absolute minimizer.
\end{proof}

\begin{remark}
We say $\uu\in\cK$ is a {\it local} minimizer of $J$, if $J(\uu)\le J(\vv)$ for any $\vv\in \cK$ with
\[
\norm{\nabla \uu-\nabla\vv}_{L^p(\Omega)}+\norm{\chi_{\{|\uu|>0\}}-\chi_{\{|\vv|>0\}}}_{L^1(\Omega)}\le \e,
\]
for some $\e>0$. 
Although, all results in this paper are proved for local minimizers, for the  sake of convenience we argue with absolute minimizers.
\end{remark}

\begin{lemma}\label{rem:coopertive-property}
If $\uu$ is a (local) minimizer, then $u^i$ is p-subharmonic for all $i=1,\dots,m$, i.e., 
\[
\int_\Omega |\nabla u^i|^{p-2}\nabla u^i\cdot \nabla\varphi\,dx \le 0,\quad \text{ for all }\varphi\in C_0^\infty(\Omega),\,\varphi\ge0.
\]
Moreover, in each component of $\{|\uu|>0\}$ for $i=1,\dots,m$, either $u^i$ is identically vanishing or it is positive. Hence,
\begin{equation}\label{p-laplacian-components}
\Delta_p u^i=0, \quad\text{ in }\{|\uu|>0\}, \, \text{ for all }i=1,\dots,m,
\end{equation}
and consequently (by the maximum principle)
 $$\norm{\uu}_{L^\infty(\Omega)}\leq \norm{\g}_{L^\infty(\Omega)}.$$
\end{lemma}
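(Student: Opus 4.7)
The argument exploits competitors that are carefully designed so that the free-boundary term $\int Q^p\chi_{\{|\uu|>0\}}\,dx$ does not hurt the energy comparison, leaving only the Dirichlet part to yield PDE information on $u^i$.

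First, to establish $p$-subharmonicity, I would fix $\varphi\in C_0^\infty(\Omega)$ with $\varphi\ge 0$ and small $\e>0$, and test with
\[
\vv_\e=(u^1,\dots,u^{i-1},(u^i-\e\varphi)^+,u^{i+1},\dots,u^m)\in\cK.
\]
The key observation is $\{|\vv_\e|>0\}\subseteq\{|\uu|>0\}$: if $v_\e^j(x)>0$ either for $j\neq i$ (so $u^j(x)>0$) or $j=i$ (so $u^i(x)>0$), in both cases $|\uu(x)|>0$. Consequently the $\chi$-term in $J(\vv_\e)-J(\uu)$ is nonpositive, and minimality reduces to $\int|\nabla u^i|^p\le\int|\nabla(u^i-\e\varphi)^+|^p$. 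Splitting the right-hand side according to the sets $\{u^i>\e\varphi\}$ and $\{u^i\le\e\varphi\}$, discarding the nonnegative piece on the latter, using the expansion $|a-\e b|^p=|a|^p-p\e|a|^{p-2}a\cdot b+o(\e)$, dividing by $\e$, and letting $\e\to 0^+$ would yield $\int|\nabla u^i|^{p-2}\nabla u^i\cdot\nabla\varphi\,dx\le 0$ (using $\nabla u^i=0$ a.e.\ on $\{u^i=0\}$ to extend the integral to $\Omega$).

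For the dichotomy on components, let $\cO$ be a connected component of $\{|\uu|>0\}$ and $B\Subset\cO$ any ball. Let $\tilde u^i$ be the unique $p$-harmonic function in $B$ with boundary trace $u^i|_{\partial B}$. The $p$-Laplacian comparison principle, together with the $p$-subharmonicity from the previous step, gives $\tilde u^i\ge u^i\ge 0$ in $B$; hence the competitor $\tilde\vv$ (equal to $\uu$ outside $B$, with $i$-th component $\tilde u^i$ inside) lies in $\cK$, and since $B\subset\cO\subseteq\{|\uu|>0\}$ and $\tilde\vv\ge\uu$ componentwise, one has $\{|\tilde\vv|>0\}=\{|\uu|>0\}$. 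Minimality together with the fact that $\tilde u^i$ uniquely minimizes the Dirichlet $p$-energy in $B$ among functions with that trace then forces $u^i\equiv\tilde u^i$ on $B$. Thus $u^i$ is $p$-harmonic throughout $\cO$, and the strong minimum principle for nonnegative $p$-harmonic functions on the connected open set $\cO$ gives the dichotomy.

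From the dichotomy, \eqref{p-laplacian-components} is immediate (trivially on components where $u^i\equiv 0$, by the $p$-harmonicity just obtained elsewhere), and the $L^\infty$ bound is a standard application of the weak maximum principle to each $p$-subharmonic $u^i$. The main obstacle is the dichotomy step itself: the one-sided information $-\Delta_p u^i\ge 0$ does not, by itself, propagate the zero set of $u^i$ across $\cO$. The crucial upgrade to full $p$-harmonicity on all of $\cO$ succeeds precisely because $\{|\uu|>0\}$ acts as a ``cushion'' that allows upward perturbation of $u^i$ without enlarging the support of $|\uu|$, so the indicator term cancels and only the convex Dirichlet part remains to pin down $u^i$.
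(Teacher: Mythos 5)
Your proof is correct and follows essentially the same route as the paper: the one-sided competitor $(u^i-\e\varphi)_+$, whose positivity set does not grow, yields $p$-subharmonicity, and the $p$-harmonic replacement in balls inside $\{|\uu|>0\}$, where the indicator term is unchanged, combined with the strong minimum principle yields the dichotomy and the remaining claims. The only cosmetic difference is that you obtain $p$-harmonicity on every ball compactly contained in a component and then invoke the minimum principle on the whole component, whereas the paper localizes at a zero of $u^i$ inside a ball where another component $u^j$ stays positive; both hinge on the same cancellation of the $\chi_{\{|\uu|>0\}}$-term.
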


\begin{proof}
Let $$\varphi\in C_0^\infty(\Omega),\,\varphi\ge 0,$$
$t >0 $,  and 
define $v^j=u^j$ for $j\ne i$ and $v^i=(u^i-t\varphi)_+$. 
Then $\vv\in\cK$ and 
 we can choose $\vv$ as a competitor, so 
\[\begin{split}
0\le \frac1t\left(J(\vv)-J(\uu)\right) = &\frac1t \int_\Omega |\nabla v^i|^p-|\nabla u^i|^p + Q^p\left(\chi_{\{|\vv|>0\}} - \chi_{\{|\uu|>0\}} \right)dx \\
\le &\frac1t \int_\Omega |\nabla (u^i-t\varphi)|^p-|\nabla u^i|^p dx \\
\ra &  \int_\Omega -p |\nabla u^i|^{p-2}\nabla u^i\cdot \nabla\varphi\,dx.
\end{split}\]

The second statement in the  lemma relies on the strong minimum principle for $p$-harmonic functions.
In fact, let $x\in \{|\uu|>0\}$ but $u^i(x)=0$ for some $i$. Choose an index $j$ such that $u^j>0$ inside $B_r(x)$ where $r$ is small enough.
Now denote by $\tilde u^i$ the $p$-harmonic extension of $u^i$ inside $B_r(x)$ and consider the competitor $\tilde \uu $ by defining $\tilde u^k=u^k$ for $k\ne i$.
Since $\{|\uu|>0\}=\{|\tilde\uu|>0\}$, we get  $J(\tilde\uu )< J(\uu)$ unless  
$\tilde u^i=u^i$ and $u^i$ is $p$-harmonic in $B_r(x)$. But this violates the strong minimum principle if $u^i (x) =0$.
\end{proof}


\section{Regularity of local minimizers}\label{sec:regular-minimizer}

\begin{lemma}\label{harmonic-extension}
Let $\uu$ be a (local) minimizer of $J$, and  $v^i$ the harmonic replacement (majorant) for $u^i$ in  $B\subset\Omega$ (for $B$ a small ball).
Then there is a universal constant $C=C(n,p)$ such that 
\begin{align*}
&\int_B|\nabla (u^i- v^i)|^p\,dx \le CQ_{\max}^p|\{|\uu|=0\}\cap B|,\qquad\text{ when }2 \le p,\\
&\int_B|\nabla (u^i- v^i)|^p\,dx \le C(Q_{\max})^{p^2/2}|\{|\uu|=0\}\cap B|^{p/2}\left(\int_B|\nabla u^i|^p\,dx\right)^{1-p/2}, \text{ when }1<p\le 2.
\end{align*}
\end{lemma}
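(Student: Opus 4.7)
The plan is to exploit minimality by using the competitor $\vv=(u^1,\dots,v^i,\dots,u^m)$ obtained from $\uu$ by replacing the $i$-th component by its $p$-harmonic replacement $v^i$ on $B$ (and leaving the other components alone). Since $u^i$ is $p$-subharmonic by Lemma \ref{rem:coopertive-property}, the comparison principle gives $v^i\ge u^i\ge 0$ in $B$, so $\vv\in\cK$; in particular $\{|\vv|>0\}\supseteq\{|\uu|>0\}$ with set-difference contained in $\{|\uu|=0\}\cap B$. The inequality $J(\uu)\le J(\vv)$ therefore reduces, after cancelling terms outside $B$, to the basic energy bound
\[
\int_B |\nabla u^i|^p - |\nabla v^i|^p\,dx \;\le\; Q_{\max}^p\,|\{|\uu|=0\}\cap B|.
\]

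Next I would invoke the standard convexity inequalities for $\xi\mapsto|\xi|^p$: for $p\ge 2$,
\[
|b|^p - |a|^p \ge p|a|^{p-2}a\cdot(b-a) + c_p|b-a|^p,
\]
and for $1<p\le 2$,
\[
|b|^p - |a|^p \ge p|a|^{p-2}a\cdot(b-a) + c_p\,\frac{|b-a|^2}{(|a|+|b|)^{2-p}}.
\]
Setting $a=\nabla v^i$, $b=\nabla u^i$ and integrating, the first-order term vanishes because $v^i$ is $p$-harmonic in $B$ and $u^i-v^i\in W^{1,p}_0(B)$. For $p\ge 2$ this yields $c_p\int_B|\nabla(u^i-v^i)|^p\,dx \le \int_B|\nabla u^i|^p-|\nabla v^i|^p\,dx$, and combining with the basic bound gives the first stated inequality at once.

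For $1<p\le 2$, the corresponding lower bound reads
\[
c_p\int_B \frac{|\nabla(u^i-v^i)|^2}{(|\nabla u^i|+|\nabla v^i|)^{2-p}}\,dx \;\le\; Q_{\max}^p\,|\{|\uu|=0\}\cap B|.
\]
To pass from this weighted $L^2$ quantity to $\int_B|\nabla(u^i-v^i)|^p$, I would apply Hölder's inequality with conjugate exponents $2/p$ and $2/(2-p)$ to the factorisation $|\nabla(u^i-v^i)|^p=\bigl[|\nabla(u^i-v^i)|^2/(|\nabla u^i|+|\nabla v^i|)^{2-p}\bigr]^{p/2}(|\nabla u^i|+|\nabla v^i|)^{p(2-p)/2}$, obtaining
\[
\int_B|\nabla(u^i-v^i)|^p \;\le\; \Bigl(\int_B\frac{|\nabla(u^i-v^i)|^2}{(|\nabla u^i|+|\nabla v^i|)^{2-p}}\Bigr)^{p/2}\Bigl(\int_B(|\nabla u^i|+|\nabla v^i|)^p\Bigr)^{(2-p)/2}.
\]
Since $v^i$ is $p$-energy-minimising for its own boundary values, $\int_B|\nabla v^i|^p\le\int_B|\nabla u^i|^p$, so the second factor is controlled by $C\int_B|\nabla u^i|^p$; substituting the displayed weighted bound into the first factor produces the second stated inequality, using $(2-p)/2 = 1-p/2$.

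The main technical point is the subquadratic convexity inequality (the $1<p\le 2$ case), which is somewhat delicate when the gradients vanish but is standard in the $p$-Laplacian literature; beyond that, everything is careful exponent bookkeeping in the Hölder step.
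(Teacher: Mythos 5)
Your proposal is correct and follows essentially the same route as the paper: compare with the competitor in which $u^i$ is replaced by its $p$-harmonic replacement to obtain $\int_B|\nabla u^i|^p-|\nabla v^i|^p\,dx\le Q_{\max}^p|\{|\uu|=0\}\cap B|$, then convert this energy gap into a bound on $\int_B|\nabla(u^i-v^i)|^p\,dx$ by quantitative convexity, finishing the case $1<p\le 2$ with the identical H\"older step and the minimality estimate $\int_B|\nabla v^i|^p\le\int_B|\nabla u^i|^p$. The only cosmetic difference is that you invoke the pointwise strong-convexity inequality for $\xi\mapsto|\xi|^p$ directly, whereas the paper derives the equivalent lower bound by integrating the monotonicity of $\xi\mapsto|\xi|^{p-2}\xi$ along the segment $w_s=s u^i+(1-s)v^i$.
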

\begin{proof}
Let $v^j=u^j$ for all $j\ne i$, and extend $v^i$ by $u^i$ in $\Omega\setminus B$. 
If $B$ is small enough (when $\uu$ is absolute minimizer, we do not need this assumption), then we have $J(\uu)\le J(\vv)$ and consequently 
\[
\int_B|\nabla u^i|^p -|\nabla v^i|^p  \,dx\le Q_{\max}^p\left|\{|\uu|=0\}\cap B\right|.
\]
Set now $w_s(x)=s u^i(x)+(1-s) v^i(x)$ for $0\le s\le 1$. Then
\[\begin{split}
\int_B|\nabla u^i|^p -|\nabla v^i|^p  \,dx =& \int_B|\nabla w_1|^p -|\nabla w_0|^p  \,dx \\
=& p \int_0^1ds \int_B|\nabla w_s|^{p-2}\nabla w_s\cdot\nabla(u^i-v^i)\, dx \\
=& p \int_0^1ds \int_B\left(|\nabla w_s|^{p-2}\nabla w_s-|\nabla v^i|^{p-2}\nabla v^i\right)\cdot\nabla(u^i-v^i)\, dx \\
=& p \int_0^1\frac{ds}s \int_B\left(|\nabla w_s|^{p-2}\nabla w_s-|\nabla v^i|^{p-2}\nabla v^i\right)\cdot\nabla(w_s-v^i)\, dx,
\end{split}\]
where for  the third equality  we have used  $\Delta_p v^i=0$.
Next  using 
\begin{equation*}
\left(|b|^{p-2}b-|a|^{p-2}a\right)\cdot(b-a)\ge \gamma\left\{
\begin{array}{ll}
|b-a|^2(|b|+|a|)^{p-2},&1<p\le 2,\\[8pt]
|b-a|^p,& 2\le p,
\end{array}\right.
\end{equation*}
we obtain for $p\ge2$
\[
\int_B|\nabla u^i|^p -|\nabla v^i|^p  \,dx \geq \gamma p\int_0^1\frac{ds}s \int_B|\nabla(w_s-v^i)|^p\, dx 
=\gamma p\int_0^1 s^{p-1}ds\int_B|\nabla(u^i-v^i)|^p\, dx,
\]
which shows  the desired estimate for $p\geq 2$.

In case $1< p\le 2$, we have 
\[\begin{split}
\int_B|\nabla u^i|^p -|\nabla v^i|^p  \,dx &\geq \gamma p\int_0^1\frac{ds}s \int_B|\nabla(w_s-v^i)|^2\left(|\nabla w_s|+|\nabla v^i| \right)^{p-2}\, dx \\
&\ge\gamma p\int_0^1 sds\int_B|\nabla(u^i-v^i)|^2\left(s|\nabla u^i|+(2-s)|\nabla v^i| \right)^{p-2}\, dx,\\
&\ge C\int_B|\nabla(u^i-v^i)|^2\left(|\nabla u^i|+|\nabla v^i| \right)^{p-2}\, dx.
\end{split}\]
On the other hand, using the H\"older inequality, we have
\[\begin{split}
\int_B|\nabla (u^i- v^i)|^p\,dx \le \left(\int_B|\nabla(u^i-v^i)|^2\left(|\nabla u^i|+|\nabla v^i| \right)^{p-2}\, dx\right)^{p/2}
\left(\int_B\left(|\nabla u^i|+|\nabla v^i| \right)^{p}\right)^{1-p/2}.
\end{split}\]
We conclude the proof by  applying  $\int_B|\nabla v^i|^p \le \int_B|\nabla u^i|^p $ (since $v^i$ is $p$-harmonic).
\end{proof}

\begin{lemma} (H\"older regularity)\label{lem:Holder-regularity}
Let $\uu$ be a (local) minimizer of $J$ in $B_1$.
Then  for some $\alpha=\alpha(n,p)$
\[
\norm{\uu}_{C^\alpha(B_{3/4})}\le C(n,p,\norm{\uu}_{L^\infty(B_1)}).
\] 
\end{lemma}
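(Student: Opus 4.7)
The plan is a Campanato decay estimate for $\phi_i(r):=\int_{B_r(x_0)}|\nabla u^i|^p\,dx$ at an arbitrary $x_0\in B_{3/4}$, combined with Morrey's embedding. Since each $u^i$ is bounded by Lemma \ref{rem:coopertive-property}, it suffices to establish, for each component, $\phi_i(r)\le C r^{n-p+p\alpha}$ on sufficiently small balls, for some $\alpha=\alpha(n,p)\in(0,1)$.

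Fix $B_R(x_0)\subset B_{7/8}$ and let $v^i$ be the $p$-harmonic majorant of $u^i$ provided by Lemma \ref{harmonic-extension}. By the classical interior Lipschitz estimate for $p$-harmonic functions (Uhlenbeck, Tolksdorf, Lewis, DiBenedetto) combined with the energy-minimality $\int_{B_R}|\nabla v^i|^p\le\int_{B_R}|\nabla u^i|^p$, one has
\[
\int_{B_r(x_0)}|\nabla v^i|^p\,dx\le C\left(\frac{r}{R}\right)^n\phi_i(R), \qquad 0<r\le R/2.
\]
Splitting $\nabla u^i=\nabla v^i+\nabla(u^i-v^i)$, applying Lemma \ref{harmonic-extension}, and using $|\{|\uu|=0\}\cap B_R|\le\omega_n R^n$, we obtain in the case $p\ge 2$
\[
\phi_i(r)\le C\left(\frac{r}{R}\right)^n\phi_i(R)+CQ_{\max}^p R^n,
\]
and in the case $1<p<2$
\[
\phi_i(r)\le C\left(\frac{r}{R}\right)^n\phi_i(R)+CR^{np/2}\phi_i(R)^{1-p/2}.
\]

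For $p\ge 2$ a standard iteration lemma of Giaquinta (rewriting $R^n=R_0^{n-\gamma}R^\gamma$ for any $\gamma<n$) directly yields $\phi_i(r)\le Cr^\gamma$. For $1<p<2$ we first apply the a priori bound $\phi_i(R)\le \int_{B_1}|\nabla u^i|^p\,dx$ to obtain $\phi_i(r)\le Cr^{np/2}$, then bootstrap: each improved bound $\phi_i(R)\le CR^{\gamma_k}$ raises the non-homogeneous exponent to $\gamma_{k+1}=np/2+\gamma_k(1-p/2)$, whose fixed point is $n$, so $\gamma_k\to n$. Choosing $\gamma\in(n-p,n)$ gives $\phi_i(r)\le Cr^{n-p+p\alpha}$ for some $\alpha\in(0,1)$ depending only on $n,p$, and the Morrey--Campanato embedding produces $u^i\in C^{0,\alpha}(B_{3/4})$ with norm controlled by $\|\uu\|_{L^\infty(B_1)}$.

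The main obstacle is the case $1<p<2$, in which the error term in Lemma \ref{harmonic-extension} carries the nonlinear factor $\phi_i(R)^{1-p/2}$ that obstructs a single application of the iteration lemma. The bootstrap above circumvents this without pinning down the sharp Hölder exponent, which is acceptable here because the stronger Lipschitz bound is proved separately in Theorem \ref{thm:Liptschitz-regularity}.
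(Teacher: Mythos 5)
Your proposal is correct in substance and rests on the same two pillars as the paper's proof -- comparison with the $p$-harmonic replacement via Lemma \ref{harmonic-extension} and Morrey's embedding -- but it runs the decay step differently. You derive the Campanato-type inequality $\phi_i(r)\le C(r/R)^n\phi_i(R)+\mathrm{error}$ from the interior gradient bound for $v^i$ plus energy minimality, and then invoke Giaquinta's iteration lemma, with a finite bootstrap of exponents $\gamma_{k+1}=np/2+\gamma_k(1-p/2)\uparrow n$ to absorb the nonlinear factor $\phi_i(R)^{1-p/2}$ when $1<p<2$. The paper instead tames that same factor in one shot: it first records the Caccioppoli bound $\int_{B_r}|\nabla u^i|^p\le CM^pr^{n-p}$ (valid because each $u^i$ is a $p$-subsolution by Lemma \ref{rem:coopertive-property}), substitutes it into the error of Lemma \ref{harmonic-extension}, and compares the two radii anisotropically via $r=\rho^{1-\alpha}$, which yields the Morrey estimate $\norm{\nabla u^i}_{L^p(B_\rho)}\le C\rho^{n/p-1+\alpha}$ directly without any iteration lemma. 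Your route is slightly longer but avoids choosing the exponent by hand; the paper's route is shorter and makes the dependence of the constants transparent. One point you should make explicit: your starting bound $\phi_i(R)\le\int_{B_1}|\nabla u^i|^p\,dx$ controls the constant by the total Dirichlet energy in $B_1$, which is not among the parameters in the statement; to obtain a constant depending only on $n,p,Q_{\max}$ and $\norm{\uu}_{L^\infty(B_1)}$ you need the Caccioppoli inequality for $p$-subsolutions on an intermediate ball (as the paper uses), after which your argument delivers exactly the stated conclusion.
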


\begin{proof}
Let $M=\norm{\uu}_{L^\infty(B_1)}$ and $B_r=B_r(y)$ for $y\in B_{3/4}$ and $r<1/8$. 
Since $u^i$ is a $p$-subsolution, a Caccioppoli type inequality (see \cite{heinonen2018nonlinear}, Lemma 3.27) implies that
\[
\int_{B_r}|\nabla u^i|^p\,dx\le \frac C{r^p}\int_{B_{2r}}(u^i)^p\,dx\le CM^pr^{n-p}.
\] 
On the other hand, if $v^i$ is the $p$-harmonic replacement of $u^i$ inside $B_r$, we have the gradient estimate (see \cite{lewis1983regularity})
\[
\sup_{B_{r/2}}|\nabla v^i| \le \left(\frac C{r^n}\int_{B_r}|\nabla v^i|^p\,dx\right)^{1/p}\le \frac{CM}r.
\]
Now, let us take some $\rho<r/2$ which will be specified below and apply Lemma \ref{harmonic-extension} in $B_r(y)$
\begin{align*}
\norm{\nabla u^i}_{L^p(B_\rho)}\le & \norm{\nabla (u^i-v^i)}_{L^p(B_\rho)} + \norm{\nabla v^i}_{L^p(B_\rho)} \\
\le & \norm{\nabla (u^i-v^i)}_{L^p(B_r)} + C\rho^{n/p} \norm{\nabla v^i}_{L^\infty(B_{r/2})} \\
\le & C\left\{
\begin{array}{ll}
r^{n/p}+M\rho^{n/p}r^{-1} &\text{ for }2 \le p,\\[7pt]
M^{1-p/2}r^{n/p-1+p/2}  +M\rho^{n/p}r^{-1} &\text{ for }1<p\le 2.
\end{array}\right.
\end{align*}
Thus for $r=\rho^{1-\alpha}$, if we take $\alpha=\alpha(n,p)$ sufficiently small, we obtain
\[
\norm{\nabla u^i}_{L^p(B_\rho)}\le C(M,n,p,Q_{\max})\rho^{n/p-1+\alpha}.
\] 
By virtue of Morrey's theorem (see \cite{maly1997fine}) we conclude the proof of the lemma.
\end{proof}

The next lemma is essential to prove the Lipschitz regularity of the minimizers.

\begin{lemma}\label{lem:uniform-L-infty-norm}
Let $\uu=(u^1,\dots,u^m)$ be a bounded minimizer in $B_1$ and $u^i(0)=0$ for some $1\le i\le m$. Then there exists a constant $C=C(n,p,Q_{\max})>0$ such that 
\[
\norm{u^i}_{L^\infty(B_{1/4})}\le C.
\]
\end{lemma}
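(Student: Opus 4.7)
The plan is to compare $u^i$ with its $p$-harmonic replacement $v^i$ inside $B_{1/2}$ and exploit the $L^p$ bound on $v^i-u^i$ provided by Lemma \ref{harmonic-extension}. Since $u^i$ is $p$-subharmonic and non-negative, one has $v^i\ge u^i\ge 0$ on $B_{1/2}$. Setting $V:=v^i(0)$, the Harnack inequality for non-negative $p$-harmonic functions yields a constant $c_H=c_H(n,p)$ such that $\sup_{B_{1/4}} v^i\le c_H V$ and $\inf_{B_{1/4}} v^i\ge V/c_H$. Because $u^i\le v^i$, the problem reduces to bounding $V\le C(n,p,Q_{\max})$.

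To bound $V$ I will use that Lemma \ref{harmonic-extension} together with Poincar\'e's inequality (applicable since $v^i-u^i\in W^{1,p}_0(B_{1/2})$) gives $\norm{v^i-u^i}_{L^p(B_{1/2})}\le C(n,p,Q_{\max})$ in the case $p\ge 2$. The idea is then to produce a universal radius $\delta_0>0$ on which $u^i\le V/(2c_H)$; then $v^i-u^i\ge V/(2c_H)$ on $B_{\delta_0}$, and integrating against the universal $L^p$ bound forces $V^p\delta_0^n\le C$, hence $V\le C'$. To make $\delta_0$ universal I will invoke the H\"older regularity of $u^i$: an inspection of the proof of Lemma \ref{lem:Holder-regularity} shows the H\"older seminorm depends only on $\norm{u^i}_{L^\infty}$ of the same component, and at most linearly. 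Since Harnack applied to $v^i$ gives $\norm{u^i}_{L^\infty(B_{3/8})}\le c_H' V$, rescaling from $B_{3/8}$ to the unit ball yields $[u^i]_{C^\alpha(B_{1/4})}\le C_0(1+V)$; combined with $u^i(0)=0$, this gives $u^i(x)\le C_0(1+V)|x|^\alpha$. Taking WLOG $V\ge 1$ (otherwise $\sup u^i\le c_H V<c_H$ is already bounded), the right-hand side is $\le V/(2c_H)$ for $|x|\le \delta_0:=(4c_H C_0)^{-1/\alpha}$, which is a universal constant.

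The main obstacle is the case $1<p<2$, where Lemma \ref{harmonic-extension} carries the extra factor $\left(\int_{B_{1/2}}|\nabla u^i|^p\right)^{1-p/2}$, which is not a priori universally bounded because one does not control $\norm{u^i}_{L^\infty(B_{1/2})}$. I plan to handle this by running the same comparison on a small ball $B_\rho$ with $\rho$ comparable to $\delta_0$: Harnack on $v^i$ bounds $\norm{u^i}_{L^\infty(B_{2\rho})}\le CV$, and the Caccioppoli estimate then yields $\int_{B_\rho}|\nabla u^i|^p\le CV^p\rho^{n-p}$. Inserting these into Lemma \ref{harmonic-extension} and Poincar\'e on $B_\rho$ produces $\int_{B_\rho}(v^i-u^i)^p\le CV^{p-p^2/2}\rho^{n+p^2/2}$; comparing with the lower bound $\int_{B_{\delta_0}}(v^i-u^i)^p\ge cV^p\delta_0^n$ gives $V^{p^2/2}\le C\delta_0^{p^2/2}$, and since $\delta_0$ is universal, $V\le C(n,p,Q_{\max})$ follows as in the $p\ge 2$ case.
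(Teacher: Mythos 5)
Your argument for $p\ge 2$ is correct, and it takes a genuinely different route from the paper: you compare $u^i$ directly with its $p$-harmonic replacement $v^i$ on $B_{1/2}$, combining Harnack for $v^i$, a component-wise and linear-in-the-sup refinement of Lemma \ref{lem:Holder-regularity} (which does follow by inspection of its proof, since only $u^i$ enters the Caccioppoli and gradient estimates), and Lemma \ref{harmonic-extension} plus Poincar\'e. The paper instead argues by contradiction and compactness: it rescales around nearest free boundary points of a hypothetically unbounded sequence so that the rescaled functions are uniformly bounded, and shows the limit is $p$-harmonic, nonnegative, vanishing at the origin, yet nondegenerate. Your route gives an explicit quantitative constant with no compactness; the paper's normalization is precisely what makes the subquadratic case harmless, and that is where your proposal has a genuine gap.

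The gap is the case $1<p<2$. Your fix applies Lemma \ref{harmonic-extension} and Poincar\'e on a small ball $B_\rho$, $\rho\sim\delta_0$; but both tools require the replacement to be taken on $B_\rho$ itself (one needs $\Delta_p v^i=0$ in $B_\rho$ with $v^i=u^i$ on $\partial B_\rho$, and $v^i-u^i\in W^{1,p}_0(B_\rho)$). Call that function $v^i_\rho$. The lower bound $\int_{B_{\delta_0}}(v^i-u^i)^p\ge c\,V^p\delta_0^n$ was derived for the replacement on $B_{1/2}$, whose value at $0$ is $V$ and which dominates $V/c_H$ on $B_{1/4}$ by Harnack. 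No such bound holds for $v^i_\rho$: by the maximum principle $v^i_\rho\le\sup_{\partial B_\rho}u^i\le C_0(1+V)\rho^\alpha\le V/(2c_H)$, so $v^i_\rho-u^i$ is uniformly small on $B_\rho$ and can even vanish identically (e.g.\ if $u^i$ happens to be $p$-harmonic in $B_\rho$). Hence the step ``$V^{p^2/2}\le C\delta_0^{p^2/2}$'' pairs an upper bound for one function with a lower bound for a different one. Conversely, if you keep the replacement on $B_{1/2}$, the factor $\bigl(\int_{B_{1/2}}|\nabla u^i|^p\bigr)^{1-p/2}$ is only bounded, via Caccioppoli, by a power of $\norm{u^i}_{L^\infty(B_1)}$ --- exactly the quantity you must not use, since the whole point of the lemma is independence of the boundary values; this route only yields $\norm{u^i}_{L^\infty(B_{1/4})}\le C\,\norm{u^i}_{L^\infty(B_1)}^{1-p/2}$. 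This is precisely the difficulty the paper's blow-up argument sidesteps: after its normalization the rescaled functions are bounded by $2$, so the gradient factor in Lemma \ref{harmonic-extension} is uniform and both ranges of $p$ are treated at once. So your proof stands for $p\ge2$, but for $1<p<2$ it needs a different idea (for instance, the paper's compactness argument).
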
 

We need to remark that the constant $C$  is independent of the boundary values of $\uu$ on $\partial \Omega$. In other words when going away from a  free boundary, but staying uniformly inside the domain $\Omega$, 
the minimizer cannot grow  too large, regardless of the boundary values.
In other words, for large enough boundary values, the origin cannot be a free boundary point.

\begin{proof}
For  the sake of convenience consider $i=1$.
Towards a contradiction, assume that there is a sequence of bounded solutions $\uu_k$ in $B_1$ such that
\[
\norm{u^1_k}_{L^\infty(B_{1/4})} > k.
\]
Set
\[
d_k(x):=\dist(x,\{u^1_k=0\}) \quad \text{ in }B_1,
\]
and define 
\[
\cO_k:=\left\{x\in B_1: d_k(x) \le (1-|x|)/3 \right\}.
\]
Obviously, $B_{1/4}\subset \cO_k$. We have also 
\[
m_k:=\sup_{\cO_k}(1-|x|)u^1_k(x)\ge \frac34\max_{B_{1/4}}u^1_k >\frac34k,
\]
Since $u^1_k$ is bounded (for fixed $k$), we get $(1-|x|)u^1_k(x)\ra0$ as $|x|\ra1$, and therefore $m_k$ is attained at some point $x_k\in\cO_k$. So,
\[
u^1_k(x_k)=\frac{m_k}{1-|x_k|}\geq m_k>\frac34k.
\]
Now let $y_k\in\partial\{u^1_k>0\}\cap B_1$ be such that $|y_k-x_k|=d_k(x_k)=:\delta_k$, which satisfies 
 $\delta_k\leq (1-|x_k|)/3$ due to $x_k\in\cO_k$.
 This implies that
 \[
 B_{2\delta_k}(y_k)\subset B_1\qquad \text{ and }\qquad B_{\delta_k/2}(y_k)\subset\cO_k.
 \]
 Indeed, if $z\in B_{2\delta_k}(y_k)$,
 \[
 |z|\leq |z-y_k|+|y_k-x_k|+|x_k|\leq 2\delta_k+\delta_k+|x_k| \leq 1,
 \]
 and if $z\in B_{\delta_k/2}(y_k)$,
 \[
 1-|z| \ge 1-|x_k| - |x_k-y_k|-|y_k-z| \ge 1-|x_k| - \delta_k -\delta_k/2 \ge 3\delta_k/2 \ge 3|z-y_k|\ge 3d_k(z).
 \]
 Also, we have $1-|z| \ge (1-|x_k|)/2$ for any $z\in B_{\delta_k/2}(y_k)$. Then
 \[
\frac{ 1-|x_k| }2 \max_{B_{\delta_k/2}(y_k)} u^1_k \le \max_{z\in B_{\delta_k/2}(y_k)} (1-|z|)u^1_k(z) \leq \max_{z\in\cO_k}(1-|z|)u^1_k(z) 
= (1-|x_k|)u^1_k(x_k)
 \]
 or 
 \[
 \max_{B_{\delta_k/2}(y_k)} u^1_k \le 2u^1_k(x_k).
 \]
 Since $B_{\delta_k}(x_k)\subset\{u^1_k>0\}$, then $u^1_k$ is $p$-harmonic inside $B_{\delta_k}(x_k)$, i.e. $\Delta_pu^1_k=0$.
 By the Harnack inequality for $p$-harmonic functions, there is a constant $c=c(n,p)$ such that 
 \[
 \min_{B_{4\delta_k/5}(x_k)}u^1_k \geq c u^1_k(x_k).
 \]
 In particular,
 \[
 \max_{B_{\delta_k/4}(y_k)}u^1_k \geq c u^1_k(x_k).
 \]
 We define the sequence
 \[
 \ww_k(x):=\frac{\uu_k(y_k+(\delta_k/2)x)}{u^1_k(x_k)},
 \]
 whose first component   satisfies 
\begin{equation}\label{lem-dist-to-fb:w-property}
 \max_{B_1}w^1_k\leq 2,\qquad\max_{B_{1/2}}w^1_k\geq c>0, \qquad w^1_k(0)=0.
\end{equation}
 Moreover, $\ww_k$ is a minimizer of 
 \[
 J_k(\ww)=\int_{B_1}\sum_{i=1}^m|\nabla w^i|^p+Q_k^p\chi_{\{|\ww|>0\}} dx,
 \]
 where $Q_k(x)=\frac{\delta_k Q(y_k+(\delta_k/2)x)}{2u_k^1(x_k)}\ra0$.
 Now consider $v_k^1$ to be $p$-harmonic replacement of $u_k^1$ in $B_{3/4}$ and apply Lemma \ref{harmonic-extension}
\begin{equation}\label{lem-dist-to-fb:w&v}
 \int_{B_{3/4}}|\nabla(w_k^1-v_k^1)|^p\,dx \leq C(\max Q_k)^p\ra0,
\end{equation}
 when $2\le p$. Similar statement holds for $1<p\le 2$, we just need to note that $\norm{\nabla w_k^1}_{L^p}$ is uniformly bounded 
 ($w_k^1$ is $p$-subsolution and uniformly bounded in $B_1$). 
 Furthermore, $w_k^1$ and $v_k^1$ are uniformly $C^\alpha$ in $B_{5/8}$ and we can extract a subsequence (still denoted by $w_k^1$ and $v_k^1$) such that $w_k^1\ra w_0$ and $v_k^1\ra v_0$ uniformly in $B_{5/8}$.
 Observe that $\Delta_p v_0=0$ in $B_{5/8}$ and \eqref{lem-dist-to-fb:w&v} implies that $w_0=v_0+c$. Hence, $w_0$ is also $p$-harmonic and by the strong maximum principle, $w_0\equiv0$ in $B_{5/8}$, since $w_0\ge0$ and $w_0(0)=0$.
On the other hand, \eqref{lem-dist-to-fb:w-property} necessitates 
\[
\max_{B_{1/2}} w_0\ge c>0,
\]
which is a contradiction.
\end{proof}

A direct consequence of the above lemma is the following estimate. 

\begin{lemma}\label{corollary:Lipshitz-near-coincidence}
Let $\uu$ be a (local) minimizer in $\Omega$. If $\dist(x_0,\{u^i=0\}) < \frac15\dist(x_0,\partial\Omega)$ then
\[
u^i(x_0) \le 4C\dist(x_0,\{u^i=0\}),
\]
where $C$ is the constant defined in Lemma \ref{lem:uniform-L-infty-norm}.
\end{lemma}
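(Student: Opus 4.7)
The plan is to reduce the claim to Lemma \ref{lem:uniform-L-infty-norm} by rescaling around the nearest zero of $u^i$. First set $d := \dist(x_0,\{u^i=0\})$ and $D:=\dist(x_0,\partial\Omega)$, so the hypothesis reads $d<D/5$. Continuity of $u^i$ (from Lemma \ref{lem:Holder-regularity}) makes $\{u^i=0\}$ closed, hence the distance is attained at some $y_0$ with $|y_0-x_0|=d$. The constant $1/5$ is exactly what allows the choice of a radius $r\in(4d,\,4D/5)$: for any such $r$ one has $x_0\in B_{r/4}(y_0)$, while
\[
\dist(y_0,\partial\Omega)\ge D-d>\tfrac{4}{5}D>r,
\]
forcing $B_r(y_0)\subset\Omega$.

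Next I would perform the $1$-homogeneous rescaling
\[
\tilde\uu(\tilde x):=\frac{\uu(y_0+r\tilde x)}{r},\qquad \tilde x\in B_1,
\]
which is the natural scaling for Bernoulli-type functionals. A direct change of variables shows that $\tilde\uu$ is again a bounded (local) minimizer on $B_1$ of a functional of the same form, now with weight $\tilde Q(\tilde x)=Q(y_0+r\tilde x)$, which still satisfies $\tilde Q\le Q_{\max}$. Boundedness of $\tilde\uu$ on $B_1$ follows from H\"older continuity of $\uu$ on the compact set $\overline{B_r(y_0)}\subset\Omega$ (Lemma \ref{lem:Holder-regularity}). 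By construction $\tilde u^i(0)=u^i(y_0)/r=0$, so Lemma \ref{lem:uniform-L-infty-norm} applies and yields
\[
\|\tilde u^i\|_{L^\infty(B_{1/4})}\le C=C(n,p,Q_{\max}).
\]

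Finally I would unscale: $x_0=y_0+r\tilde x_0$ with $|\tilde x_0|=d/r<1/4$, so $\tilde u^i(\tilde x_0)\le C$, i.e., $u^i(x_0)\le Cr$. Letting $r\searrow 4d$ gives the desired bound $u^i(x_0)\le 4Cd$. I do not anticipate any serious obstacle; the only things to verify carefully are the scaling invariance of the functional under the above $1$-homogeneous transformation (which leaves $\|\tilde Q\|_\infty$ controlled by $Q_{\max}$), and the inclusion $B_r(y_0)\subset\Omega$, for which the constant $1/5$ in the hypothesis is precisely tailored.
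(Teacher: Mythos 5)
Your proof is correct and follows essentially the same route as the paper: choose the nearest zero $y_0$, rescale $\uu(y_0+4d\,\cdot)/(4d)$ (the paper uses exactly the factor $4d_0$, you take $r\searrow 4d$), and invoke Lemma \ref{lem:uniform-L-infty-norm}, with the hypothesis $d<\tfrac15\dist(x_0,\partial\Omega)$ guaranteeing the rescaled ball stays in $\Omega$. The limiting choice of $r$ is only a minor technical refinement of the paper's argument, not a different approach.
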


\begin{proof}
Choose $y_0\in \{u^i=0\}$ such that $\dist(x_0,\{u^i=0\})=|x_0-y_0|=d_0$. 
Now apply Lemma \ref{lem:uniform-L-infty-norm} to
\[
\vv(x):=\frac{\uu(y_0+4d_0x)}{4d_0}
\]
to get 
\[
u^i(x_0)\le 4Cd_0.
\]
\end{proof}

With the above two results we will obtain uniform Lipschitz regularity for minimizers.

\begin{theorem}\label{thm:Liptschitz-regularity}
Let $\uu$ be a (local) minimizer in $\Omega$, then $\uu$ is Lipschitz. Moreover, for every $K\Subset\Omega$ such that $K\cap\partial\{u^i>0\}\ne\emptyset$ for some $1\le i\le m$, there is a constant $C=C(n,p,Q_{\max},\dist(K,\partial\Omega),\Omega)>0$ such that
\[
\norm{\nabla u^i}_{L^\infty(K)}\le C.
\]
\end{theorem}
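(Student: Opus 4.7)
The plan is to obtain a uniform pointwise bound on $|\nabla u^i(x_0)|$ at every interior point of $K\cap\{u^i>0\}$. Since $u^i$ is continuous by Lemma \ref{lem:Holder-regularity} and its weak gradient vanishes almost everywhere on $\{u^i=0\}$, such a pointwise bound on the open set $\{u^i>0\}\cap K$ immediately gives an $L^\infty$ bound on $\nabla u^i$ over all of $K$, which is the statement of the theorem.

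The key structural fact is that $u^i$ is $p$-harmonic on $\{u^i>0\}$ (Lemma \ref{rem:coopertive-property}). Set $D:=\dist(K,\partial\Omega)>0$ and, for $x_0\in K\cap\{u^i>0\}$, let $d:=\dist(x_0,\{u^i=0\})$. I would split into two regimes.

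\emph{Far regime ($d\ge D/10$):} Then $u^i$ is $p$-harmonic on $B_{D/10}(x_0)\subset\{u^i>0\}$ and bounded by $\|\g\|_{L^\infty(\Omega)}$ (Lemma \ref{rem:coopertive-property}). The classical interior gradient estimate for $p$-harmonic functions \cite{lewis1983regularity} immediately yields $|\nabla u^i(x_0)|\le C(n,p)\|\g\|_{L^\infty}/D$. \emph{Near regime ($d< D/10$):} Here $d<\dist(x_0,\partial\Omega)/5$, so Lemma \ref{corollary:Lipshitz-near-coincidence} applies and provides the linear pointwise bound $u^i(x_0)\le 4Cd$. Since $u^i$ is nonnegative and $p$-harmonic on $B_d(x_0)$, Harnack's inequality upgrades this to
\[
\sup_{B_{d/2}(x_0)} u^i \le C_H\, u^i(x_0) \le 4CC_H\, d.
\]
The interior $p$-harmonic gradient estimate on $B_{d/2}(x_0)$ then gives
\[
|\nabla u^i(x_0)|\le \frac{C(n,p)}{d}\sup_{B_{d/2}(x_0)} u^i \le C',
\]
with $C'$ depending only on $n,p,Q_{\max}$.

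The main obstacle is the near regime: the interior gradient estimate carries a $1/d$ factor that blows up as $x_0$ approaches the free boundary, and this must be \emph{exactly} balanced by the linear growth of $u^i$ away from $\{u^i=0\}$. Since Lemma \ref{corollary:Lipshitz-near-coincidence} furnishes only a pointwise bound at $x_0$, Harnack's inequality is essential as a bridge that promotes this pointwise bound to the ball sup-bound demanded by the gradient estimate. Combining the two regimes yields the claimed constant depending on $n,p,Q_{\max}$, $\dist(K,\partial\Omega)$, and (through $\|\g\|_{L^\infty}$) on $\Omega$ and $\g$.
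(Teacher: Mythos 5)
Your near regime is sound and is essentially Step~2 of the paper's proof: Lemma~\ref{corollary:Lipshitz-near-coincidence} gives the linear bound $u^i\le Cd$ near $\{u^i=0\}$, and rescaling plus the interior gradient estimate for $p$-harmonic functions (the paper uses the lemma on the whole ball where you invoke Harnack, a harmless difference) gives a universal bound there. The genuine gap is your far regime: you bound $u^i$ by $\norm{\g}_{L^\infty(\Omega)}$ via the maximum principle and obtain $|\nabla u^i(x_0)|\le C(n,p)\norm{\g}_{L^\infty}/D$, so your final constant depends on the boundary data $\g$. The theorem asserts $C=C(n,p,Q_{\max},\dist(K,\partial\Omega),\Omega)$, and the paper stresses immediately after the statement (and after Lemma~\ref{lem:uniform-L-infty-norm}) that the constant must \emph{not} depend on the boundary values; moreover $\g$ is only assumed to lie in $W^{1,p}(\Omega;\bR^m)$, so $\norm{\g}_{L^\infty}$ need not even be finite. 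Tellingly, you never use the hypothesis $K\cap\partial\{u^i>0\}\ne\emptyset$, which is precisely what eliminates the dependence on $\g$.

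The paper closes this in its Step~1: set $r_0=\frac15\dist(K,\partial\Omega)$, assume $K$ connected (otherwise enlarge it), and join an arbitrary $x\in K$ to the free boundary by a chain $x=x_0,\dots,x_k$ with $x_j\in B_{r_0/2}(x_{j-1})$, $B_{r_0}(x_j)\subset\{u^i>0\}$ for $j<k$ and $B_{r_0}(x_k)\cap\{u^i=0\}\ne\emptyset$, where $k$ depends only on $\Omega$ and $\dist(K,\partial\Omega)$. Lemma~\ref{corollary:Lipshitz-near-coincidence} gives $u^i(x_k)\le 4Cr_0$, and Harnack's inequality for the $p$-harmonic $u^i$ propagated backwards along the chain ($u^i(x_{j+1})\ge c\,u^i(x_j)$) yields $u^i(x)\le 4c^{-k}Cr_0$, an $L^\infty$ bound on $K$ independent of $\g$. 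Substituting this sup bound for your maximum-principle bound in the far regime (the paper's Step~3), your two-regime argument then delivers the theorem with the stated dependence.
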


Once again we remark that the constant $C$ does not depend on the boundary values of the minimizer, as long as we stay uniformly inside the domain.

\begin{proof}
{\bf Step 1:}
We show that  $u^i$ is bounded in $K$ with a universal constant $C$ depending on the following ingredients $n,p,Q_{\max},\dist(K,\partial\Omega),\Omega$.
Let $r_0=\frac1{5}\dist(K,\partial\Omega)$ and for any arbitrary point $x\in K$ there is a sequence of points $x=x_0,\dots, x_k\in K$ with (we can assume $K$ is connected, otherwise   replace it with a bigger one which is connected)
\[
x_j\in B_{r_0/2}(x_{j-1}),\quad\text{ for }j=1,\dots,k,
\]
 $B_{r_0}(x_j)\subset\{u^i>0\}$ for $j=0,\dots,k-1$ and $B_{r_0}(x_k)\cap\{u^i=0\}\ne \emptyset$. 
Note that $k$, the number of points, only depends on $\Omega$ and $\dist(K,\partial\Omega)$.
From Lemma \ref{corollary:Lipshitz-near-coincidence}, we get 
\[
u^i(x_k)\leq 4Cr_0.
\]
Since $u^i$ is $p$-harmonic in $B_{r_0}(x_j)$, $j=0,\dots,k-1$, by virtue of Harnack's inequality,  there is a constant $c$ such that
\[
u^i(x_{j+1})\ge c u^i(x_j).
\]
Thus
\[
u^i(x)\le 4c^{-k}Cr_0.
\]
 
{\bf Step 2:}
Here we find a control on $\nabla u^i$ at points close to $\{u^i=0\}$.
If $d=\dist(y,\{u^i=0\}) < \frac1{11}\dist(y,\partial\Omega)$, every points $x_0\in B_{d}(y)$ satisfy condition Lemma \ref{corollary:Lipshitz-near-coincidence}. Then
\[
u^i(x_0)\le 4C\dist(x_0,\{u^i=0\}) \leq 8Cd.
\]
Let us define 
\[
v(x):=\frac{u^i(y+dx)}d
\]
which is a $p$-harmonic in $B_1$ and $\norm{v}_{L^\infty(B_1)}\leq 8C$. 
By $p$-Laplacian estimate for gradient, we obtain 
\[
|\nabla v(0)|\leq \tilde C(n,p,Q_{\max}),
\]
that is $|\nabla u^i(y)|\le C(n,p,Q_{\max})$.

 {\bf Step 3:}
 Let $r_1=\frac1{11}\dist(K,\partial\Omega)$. If $\dist(x,\{u^i=0\})\leq r_1$, by the result of Step 2 we have already $|\nabla u^i(x)| \leq C$.
 If $\dist(x,\{u^i=0\})>r_1$, then $u^i$ is $p$-harmonic inside $B_{r_1}(x)$ and $\norm{u^i}_{L^\infty(B_{r_0})}$ is universally bounded by the result of Step 1. Thus $|\nabla u^i(x)|$ will be universally bounded. 
 \end{proof}

  A straightforward corollary to this theorem, that can be useful later, is the following

 \begin{corollary}\label{cor:nondege-lipschitz} 
 Let $\uu $ be a (local) minimizer for our functional.
 For every $K\Subset\Omega$ there exists constant $C=C(n,p,Q_{\max},\dist(K,\partial\Omega),\Omega)$ such that 

  \[
\frac1r\dashint_{\partial B_r}u^i\,dx > C\ \ \text{ implies }\ \ u^i>0\text{ in }B_r.
 \]
 \end{corollary}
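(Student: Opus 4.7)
The plan is to argue the contrapositive: assume $u^i$ is not strictly positive throughout $B_r$ and derive an upper bound on $\frac1r\dashint_{\partial B_r} u^i$ depending only on the prescribed parameters.

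First, under this assumption there exists some $y \in \overline{B_r}$ with $u^i(y)=0$. Since $B_r$ lies in some compact subset of $\Omega$ at distance at least $\dist(K,\partial\Omega)$ from $\partial\Omega$, and since $u^i$ vanishes at $y$, an enlargement of $K$ containing $B_r$ meets $\partial\{u^i>0\}$ (either directly, or through the boundary of a connected component of $\{|\uu|>0\}$ on which $u^i\equiv 0$, as furnished by Lemma \ref{rem:coopertive-property}). Theorem \ref{thm:Liptschitz-regularity} is therefore applicable and yields
\[
|\nabla u^i|\le L\quad\text{on }B_r,
\]
with $L=L(n,p,Q_{\max},\dist(K,\partial\Omega),\Omega)$.

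Second, I would integrate along the segment from $y$ to any $x\in\partial B_r$, using that $u^i(y)=0$, to obtain
\[
u^i(x)=u^i(x)-u^i(y)\le L|x-y|\le 2Lr.
\]
Averaging over $\partial B_r$ yields $\frac1r\dashint_{\partial B_r} u^i \le 2L$. Setting $C=2L$, the contrapositive of the corollary follows.

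The argument is short and presents no real technical obstacle; it is purely a packaging of the uniform Lipschitz estimate already proved. The only minor care needed is to ensure that Theorem \ref{thm:Liptschitz-regularity} can be invoked on a compact set containing $B_r$ together with a point of $\partial\{u^i>0\}$, which is automatic once $u^i$ has a zero in $\overline{B_r}$, and that the Lipschitz constant depends on the same quantities listed in the statement of the corollary, which it does.
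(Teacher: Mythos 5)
Your argument is correct and is essentially the paper's own proof: the uniform Lipschitz bound of Theorem \ref{thm:Liptschitz-regularity} combined with a zero of $u^i$ in $B_r$ gives $u^i\le Cr$ on $\partial B_r$, which is exactly the contrapositive of the corollary. The only caveat is your parenthetical justification that the enlarged $K$ must meet $\partial\{u^i>0\}$: boundary points of a component of $\{|\uu|>0\}$ on which $u^i\equiv0$ need not belong to $\partial\{u^i>0\}$, but in that degenerate case ($B_r\cap\partial\{u^i>0\}=\emptyset$ while $u^i$ vanishes somewhere in $B_r$) one has $u^i\equiv0$ on $B_r$, so the averaged bound holds trivially and nothing is lost.
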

 \begin{proof}
 If $B_r\subset K$ contains a free boundary point, then by Theorem \ref{thm:Liptschitz-regularity}, $u^i\leq Cr$ on $\partial B_r$.
 \end{proof}


 \section{Nondegeneracy}\label{sec:nondeg}

 \begin{lemma}\label{lem:nondegeneracy}
 For any  $0<\kp<1$ there exists a constant $c=c(\kp,n,m,p, Q_{\min})>0$ such that for every minimizer $\uu$ and for any (small) ball $B_r\subset\Omega$
 \[
\norm{\uu}_{L^\infty(B_{r})} < cr\ \ \text{ implies }\ \ \uu= {\bf 0} \text{ in } B_{\kp r}.
 \]
 \end{lemma}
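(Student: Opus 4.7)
The plan is to argue by comparison with an admissible min-competitor, derive a volume estimate for $\{|\uu|>0\}\cap B_{\kp r}$ from minimality, and upgrade this to the pointwise conclusion using the regularity of Section~\ref{sec:regular-minimizer}.

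By the scaling $\tilde\uu(x)=\uu(rx)/r$, which sends minimizers to minimizers of a functional of the same form (with $Q_{\min},Q_{\max}$ unchanged), I may reduce to $r=1$: assume $M:=\norm{\uu}_{L^\infty(B_1)}<c$ and show $\uu\equiv\mathbf 0$ on $B_\kp$. Introduce the Lipschitz ramp
\[
h(x):=\frac{M}{1-\kp}\bigl(|x|-\kp\bigr)_+,
\]
which vanishes on $\overline{B_\kp}$, equals $M$ on $\partial B_1$, and satisfies $|\nabla h|\equiv M/(1-\kp)$ on the annulus $A:=B_1\setminus\overline{B_\kp}$. (Alternatively the $p$-capacity potential of $\overline{B_\kp}$ in $B_1$ scaled by $M$ yields a sharper constant.) Set $v^i:=\min\{u^i,h\}$. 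Then $\vv\in\cK$: $\vv\ge\mathbf 0$, $\vv=\uu$ on $\partial B_1$ since $u^i\le M=h$ there, and $\vv\equiv\mathbf 0$ on $\overline{B_\kp}$ since $h\equiv 0$ there. Also $\vv\le\uu$ pointwise, so $\{|\vv|>0\}\subset\{|\uu|>0\}$ and $\{|\vv|>0\}\cap B_\kp=\emptyset$.

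Inserting $\vv$ into $J(\uu)\le J(\vv)$ and using the two containments above for the $Q^p$-term gives
\[
Q_{\min}^p\,|B_\kp\cap\{|\uu|>0\}|\;\le\;\sum_{i=1}^m\int_{B_1}\bigl(|\nabla v^i|^p-|\nabla u^i|^p\bigr).
\]
The pointwise identity $|\nabla v^i|^p-|\nabla u^i|^p=(|\nabla h|^p-|\nabla u^i|^p)\chi_{\{u^i>h\}}$ together with $|\nabla h|\equiv 0$ on $B_\kp$ bounds the right-hand side by $mC(n,p,\kp)M^p$. Substituting $M<c$ yields
\begin{equation}\label{eq:nondeg-vol-bound}
|B_\kp\cap\{|\uu|>0\}|\;\le\;C(n,p,m,\kp)\,\frac{c^p}{Q_{\min}^p}.
\end{equation}

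To finish, I exploit that $\{|\uu|>0\}$ is open by continuity of $\uu$ (Lemma~\ref{lem:Holder-regularity}), so any nonempty intersection with $B_\kp$ would contain a ball. Applying \eqref{eq:nondeg-vol-bound} at a slightly larger radius $\kp'\in(\kp,1)$ shows that every $y\in B_\kp$ lies within distance $O(c^{p/n})$ of $\{|\uu|=0\}$, and the Lipschitz bound of Theorem~\ref{thm:Liptschitz-regularity} on $\overline{B_{(1+\kp)/2}}$ then yields $|u^i(y)|\le L\,\dist(y,\{u^i=0\})$ with $L$ universal. Choosing $c$ small enough forces this pointwise bound to be incompatible with $y\in\{|\uu|>0\}$, giving $\uu\equiv\mathbf 0$ on $B_\kp$. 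I anticipate this last step—passing from the strictly positive measure bound \eqref{eq:nondeg-vol-bound} to genuine vanishing—to be the delicate point, requiring a careful choice of $c$ together with the quantitative Lipschitz control from Section~\ref{sec:regular-minimizer}.
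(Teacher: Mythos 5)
Your first half is sound: the truncation competitor $v^i=\min\{u^i,h\}$ is admissible, and minimality does yield the volume estimate $|B_\kp\cap\{|\uu|>0\}|\le C(n,p,m,\kp)\,c^p/Q_{\min}^p$. (The paper uses the same truncation idea, with the $p$-capacitary potential of $B_{\sqrt\kp}\setminus B_\kp$ in place of your ramp.) The problem is the final step, which is where the actual content of the lemma lies, and as written it does not close. From ``every $y\in B_\kp$ lies within $O(c^{p/n})$ of $\{|\uu|=0\}$'' plus the Lipschitz bound you obtain only $|u^i(y)|\le L\,O(c^{p/n})$, i.e.\ smallness of $\uu$ on $B_\kp$ --- which you already knew, since $\norm{\uu}_{L^\infty(B_1)}<c$ by hypothesis. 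Smallness is in no way ``incompatible with $y\in\{|\uu|>0\}$'': nothing proved in Sections 2--3 prevents $\{|\uu|>0\}\cap B_\kp$ from being a thin tentacle of tiny measure on which $|\uu|$ is positive but tiny. To convert ``small and close to the zero set'' into ``identically zero'' you would need a lower bound of the form $|\uu|\gtrsim\dist(\cdot,\partial\{|\uu|>0\})$ on the positivity set, i.e.\ precisely the nondegeneracy statement (relation \eqref{corol:nondegeneracy}) that this lemma is meant to establish; the argument is circular at that point.

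The paper avoids this by structuring the comparison so that the right-hand side is re-absorbed into the left, rather than bounded by a fixed constant times $M^p$. Using the $p$-harmonic barrier $v=M\sqrt\kp\,\phi$ and integrating by parts, the excess energy is controlled by $CM^{p-1}\sum_i\int_{\partial B_\kp}u^i\,d\cH^{n-1}$; the trace inequality $\int_{\partial B_\kp}u^i\,d\cH^{n-1}\le C\int_{B_\kp}(u^i+|\nabla u^i|)\,dx$, Young's inequality, $u^i\le M$ and $Q\ge Q_{\min}$ then bound this by $CM^{p-1}(1+M)$ times the full quantity $\int_{B_\kp}\sum_i|\nabla u^i|^p+Q^p\chi_{\{|\uu|>0\}}\,dx$. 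For $M$ small this forces that quantity to vanish, hence $\chi_{\{|\uu|>0\}}=0$ a.e.\ in $B_\kp$ and, by continuity, $\uu\equiv\mathbf 0$ there --- not merely a set of small measure. If you wish to keep your Lipschitz ramp, you would need to estimate its cost in the same self-improving way (e.g.\ by integrating by parts on $\{u^i>h\}$ and invoking the trace inequality), rather than by the crude bound $C M^p$; the volume estimate alone, however small, cannot be upgraded to vanishing by Lipschitz regularity.
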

 \begin{proof}
 Without loss of generality, we may assume  $r=1$. Let
 \[
 M=\norm{\uu}_{L^\infty(B_{\sqrt\kp})}.
 \]
 Let $\phi(x)=\phi_\kp(|x|)$ be the solution of
 \[
 \Delta_p\phi=0,\ \text{ in }B_{\sqrt\kp}\setminus B_\kp, \qquad \phi=0\ \text{ on }\partial B_\kp, \qquad \phi=1\ \text{ on }\partial B_{\sqrt\kp}
 \]
 and extend  $\phi=0$ in $B_\kp$. Set $v= M\sqrt\kp \phi$ and $w^i=\min(u^i,v)$ for all $i=1,\dots,m$. 
 Since $v\geq u^i$ on $\partial B_{\sqrt\kp}$, so $w^i=u^i$  on $\partial B_{\sqrt\kp}$. 
 Therefore $J(\uu)\leq J(\ww)$, or equivalently 
 \[
 \int_{B_{\sqrt\kp}} \sum_{i=1}^m|\nabla u^i|^p + Q^p\chi_{\{|\uu|>0\}}\, dx \leq \int_{B_{\sqrt\kp}\setminus B_\kp} \sum_{i=1}^m|\nabla w^i|^p + Q^p\chi_{\{|\ww|>0\}}\, dx.
 \]
 Since $\{|\ww|>0\}\subset \{|\uu|>0\}$, we get
 \[\begin{split}
  \int_{B_{\kp}}\sum_{i=1}^m |\nabla u^i|^p + Q^p\chi_{\{|\uu|>0\}}\, dx 
 & \leq \int_{B_{\sqrt\kp}\setminus B_\kp}\sum_{i=1}^m\left( |\nabla w^i|^p-|\nabla u^i|^p \right)\,dx\\
 &\leq p\int_{B_{\sqrt\kp}\setminus B_\kp}\sum_{i=1}^m|\nabla w^i|^{p-2}\nabla w^i\cdot\nabla(w^i-u^i)\,dx\\
 &=-p\int_{\partial B_\kp}\sum_{i=1}^m|\nabla w^i|^{p-2}(w^i-u^i)(\nabla w^i\cdot\nu)\,d\cH^{n-1}\\
 &=p\int_{\partial B_\kp}\sum_{i=1}^m|\nabla v|^{p-2}u^i(\nabla v\cdot\nu)\,d\cH^{n-1}.
 \end{split}\]
 Since $|\nabla v|\leq C(p,\kp,n)M$ on $\partial B_\kp$, we find out  that
\begin{equation}\label{lem:nondegneracy-rel1}
 \int_{B_{\kp}}\sum_{i=1}^m |\nabla u^i|^p + Q^p\chi_{\{|\uu|>0\}}\, dx \le CM^{p-1}\sum_{i=1}^m\int_{\partial B_\kp}  u^i\,d\cH^{n-1}.
 \end{equation}
 On the other hand,
 \[\begin{split}
 \int_{\partial B_\kp}  u^i\,d\cH^{n-1}&\le C(n,\kp)\int_{B_\kp}u^i+|\nabla u^i|\,dx \\
 &= C(n,\kp)\int_{B_\kp}\left(u^i+|\nabla u^i|\right)\chi_{\{u^i>0\}}  \,dx \\
 &\le C(n,\kp,p,Q_{\min})\int_{B_\kp} MQ^p\chi_{\{u^i>0\}} + |\nabla u^i|^p+Q^p\chi_{\{u^i>0\}} \,dx\\
 &\leq C(n,\kp,p,Q_{\min})(1+M)\int_{B_\kp}|\nabla u^i|^p+Q^p\chi_{\{u^i>0\}} \,dx.
 \end{split}\]
 Comparing with \eqref{lem:nondegneracy-rel1}, we we arrive at 
 \[
 \int_{B_{\kp}}\sum_{i=1}^m |\nabla u^i|^p + Q^p\chi_{\{|\uu|>0\}}\, dx \le CM^{p-1}(1+M) \int_{B_{\kp}}\sum_{i=1}^m |\nabla u^i|^p + Q^p\chi_{\{|\uu|>0\}}\, dx.
 \]
 Therefore, if $M$ is small enough, we obtain that $\uu= {\bf 0}$ in $B_\kp$.
 \end{proof}

 An immediate consequence of the above lemma is the following.  For any $K\Subset\Omega$ there are positive constants $c_0,C_0$ such that if $B_r(x)\subset K\cap\{|\uu|>0\}$ touches $\partial\{|\uu|>0\}$ then
 \begin{equation}\label{corol:nondegeneracy}
      c_0r \le |\uu(x)| \le C_0r.
 \end{equation}

 \medskip
 
 \begin{theorem}\label{thm:porosity}
 For  $K\Subset\Omega$ there exists constant $0<c=c(n,m,p,K,\Omega)<1$ such that for any (local) minimizer $\uu$ and for any (small) ball 
 $B_r(x)\subset K$ with $x\in\partial\{|\uu|>0\}$,
 \begin{equation}\label{thm:porosity-rel1}
 c < \frac{\cL^n(B_r (x) \cap\{|\uu|>0\})}{\cL^n(B_r (x))} < 1-c.
 \end{equation}
 \end{theorem}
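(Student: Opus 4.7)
The lower bound $\cL^n(B_r(x)\cap\{|\uu|>0\}) \ge c\cL^n(B_r(x))$ is the easy one, and follows directly from nondegeneracy and Lipschitz regularity. Applying Lemma \ref{lem:nondegeneracy} at scale $r/2$ produces a point $y_0\in B_{r/2}(x)$ with $|\uu(y_0)|\ge cr$; a pigeonhole over the components selects an index $i_0$ with $u^{i_0}(y_0)\ge cr/\sqrt m$; then Theorem \ref{thm:Liptschitz-regularity} forces $u^{i_0}$ to remain strictly positive on a ball $B_\rho(y_0)$ with $\rho\asymp r$, a fixed fraction of which lies inside $B_r(x)$.

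For the upper bound, equivalently $|Z|\ge cr^n$ where $Z:=\{|\uu|=0\}\cap B_r(x)$, the plan is an energy comparison with a $p$-harmonic replacement. Using nondegeneracy at scale $r/4$ and pigeonhole, choose an index $i_1$ and a point $y\in B_{r/4}(x)$ with $u^{i_1}(y)\ge cr$, and let $v$ be the $p$-harmonic replacement of $u^{i_1}$ in $B_r(x)$. Since $u^{i_1}$ is $p$-subharmonic with the same boundary data as $v$, comparison yields $v\ge u^{i_1}\ge 0$ in $B_r(x)$, so $v(y)\ge cr$. Harnack's inequality for the nonnegative $p$-harmonic function $v$ on $B_{r/2}(x)\Subset B_r(x)$ then transports this lower bound to the centre: $v(x)\ge cr$. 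Because $u^{i_1}(x)=0$ at the free boundary point, $(v-u^{i_1})(x)\ge cr$. Uniform Lipschitz control of $u^{i_1}$ (from Theorem \ref{thm:Liptschitz-regularity}) and of $v$ (from the interior gradient estimate for $p$-harmonic functions applied to the boundary values of size $\le Lr$) upgrade the pointwise inequality to $(v-u^{i_1})\ge cr/2$ on a ball $B_\rho(x)$ with $\rho\asymp r$, whence
\[
\int_{B_r(x)}|v-u^{i_1}|^p\,dx \ge c r^{n+p}.
\]
Combining with Poincar\'e's inequality (valid since $v-u^{i_1}$ vanishes on $\partial B_r(x)$) and Lemma \ref{harmonic-extension} yields, for $p\ge 2$,
\[
c r^{n+p}\le C r^p\int_{B_r(x)}|\nabla(v-u^{i_1})|^p\,dx \le C r^p Q_{\max}^p|Z|,
\]
so $|Z|\ge c r^n$. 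In the range $1<p\le 2$ the slightly weaker form of Lemma \ref{harmonic-extension}, combined with $\int_{B_r}|\nabla u^{i_1}|^p\le CL^p r^n$ (from the Lipschitz bound), gives the same conclusion.

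The delicate part is the coordination of scales. Nondegeneracy has to be invoked on a ball centred at $x$ small enough (here $r/4$) for Harnack to propagate the largeness of $v$ all the way back to the free boundary point $x$, and the subsequent Lipschitz step must convert the pointwise lower bound at $x$ into one valid on a ball of radius comparable to $r$ rather than on a vanishingly small neighbourhood. The sub-quadratic range $1<p<2$ adds the wrinkle that the energy estimate of Lemma \ref{harmonic-extension} is weaker; here one has to use the a priori Lipschitz control on $\nabla u^{i_1}$ to absorb the extra factor $(\int|\nabla u^{i_1}|^p)^{1-p/2}$.
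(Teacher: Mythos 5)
Your proposal is correct, but the two halves compare differently with the paper. The lower bound is essentially the paper's argument: nondegeneracy produces $y_0\in B_{r/2}(x)$ with $|\uu(y_0)|\gtrsim r$, and Lipschitz continuity (the paper routes this through Corollary \ref{cor:nondege-lipschitz}, you use the pointwise Lipschitz bound directly) yields a ball of radius $\asymp r$ on which one component stays positive. For the upper bound, however, you take a genuinely different route. The paper argues by compactness and contradiction: a sequence of minimizers with $\cL^n(\{|\uu_k|=0\}\cap B_1)\to 0$, uniform Lipschitz bounds, Lemma \ref{harmonic-extension} forcing the limit to be $p$-harmonic, the strong minimum principle killing the limit, and nondegeneracy giving the contradiction. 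You instead give a direct, quantitative estimate in the spirit of the classical Alt--Caffarelli density lemma: comparison ($u^{i_1}$ is $p$-subharmonic, so $v\ge u^{i_1}$), Harnack for $v$ to transport the nondegenerate value from $y\in B_{r/4}(x)$ back to the free boundary point $x$, Lipschitz control to spread $(v-u^{i_1})(x)\gtrsim r$ over a ball of radius $\asymp r$, Poincar\'e for $v-u^{i_1}\in W^{1,p}_0(B_r(x))$, and Lemma \ref{harmonic-extension} to bound the resulting energy by $|\{|\uu|=0\}\cap B_r|$ (with the correct extra step, which you noted, of absorbing the factor $\bigl(\int|\nabla u^{i_1}|^p\bigr)^{1-p/2}\le CL^{p-p^2/2}r^{n(1-p/2)}$ when $1<p<2$). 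What your approach buys is an explicit, constructive constant $c$ and no passage to limits of minimizers; what the paper's approach buys is brevity and reuse of the same compactness template it employs elsewhere (Lemma \ref{lem:uniform-L-infty-norm}, Theorem \ref{thm:measure-laplacian}). One small simplification available to you: once Harnack gives $v\ge cr$ on all of $B_{r/2}(x)$, the gradient estimate for $v$ is not needed, since $u^{i_1}(z)\le L|z-x|$ already yields $v-u^{i_1}\ge cr/2$ on $B_{cr/(2L)}(x)$. The scale bookkeeping you flag (nondegeneracy at radius $r/4$, Harnack on $B_{r/2}$, replacement on $B_r$) is consistent, and all the lemmas you invoke apply at these scales since $B_r(x)\subset K\Subset\Omega$ and $x$ lies on $\partial\{u^{i_1}>0\}$ up to a nearby point, so Theorem \ref{thm:Liptschitz-regularity} is applicable to the component $u^{i_1}$.
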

 
 \begin{proof}
 By Lemma \ref{lem:nondegeneracy}, there exists $y\in B_{r/2}$ such that $|\uu(y)|\ge cr>0$. 
 Using Lipschitz continuity we get
 \[
 \dashint_{\partial B_{\kp r}(y)}|\uu| \geq \frac{cr}2,
 \]
 provided $\kp$ is small enough. Hence
 \[
\frac1{\kp r} \dashint_{\partial B_{\kp r}(y)}|\uu| \geq \frac{c}{2\kp},
 \]
 and also for at least one component $u^i$
 \[
 \frac1{\kp r} \dashint_{\partial B_{\kp r}(y)}u^i \geq \frac{c}{2\kp m},
 \]
 which  by Corollary  \ref{cor:nondege-lipschitz} implies   $|\uu|>0$ in $B_{\kp r}(y)$. This gives the lower estimate in \eqref{thm:porosity-rel1}.
 
 To prove the estimate from above we assume,   for simplicity, $r=1$ and suppose (towards a contradiction) 
 that  there is a sequence of minimizers $\uu_k$ in $B_1(0)$ such that $0\in \partial\{|\uu_k|>0\}$ and 
 \[
 \cL^n(\{|\uu_k|=0\})=:\e_k\ra0.
 \]
 Let $v_k^i$ be a $p$-harmonic function in $B_{1/2}$ with boundary data $v_k^i=u_k^i$ on $\partial B_{1/2}$.
 From Lemma \ref{harmonic-extension}, we obtain that
 \begin{equation}\label{thm:porosity-rel2}
 \int_{B_{1/2}}|\nabla(v_k^i-u_k^i)|^p\,dx \le C(\e_k)\ra0.
 \end{equation}
 Since $u_k^i$ and $v_k^i$ are both uniformly Lipschitz in $B_{1/4}$, we may assume that $u_k^i\ra u_0^i$ and $v_k^i\ra v_0^i$ uniformly in $B_{1/4}$.
 Observe that $\Delta_p v_0^i=0$ and \eqref{thm:porosity-rel2} implies that $u_0^i=v_0^i+ c$. 
 Thus $\Delta_p u_0^i=0$ in $B_{1/4}$ and from the strong minimum principle (since $u_0^i (0) =0$)  it follows $u_0^i\equiv 0$ in $B_{1/4}$, since $u_0^i\ge0$ and $u_0^i(0)=0$.
 On the other hand form nondegeneracy property, Lemma  \ref{lem:nondegeneracy}, we know 
 \[
\norm{\uu_k}_{L^\infty(B_{1/2})}\ge c>0, 
 \]
 which implies a similar inequality for $\uu_0$, and hence  a contradiction.
 \end{proof}

 \begin{remark}\label{rem:free-bndry-measure-zero}
 Theorem \ref{thm:porosity}, along with the Lebesgue density theorem  implies that the free boundary  has zero Lebesgue measure $$\cL^n(\partial\{|\uu|>0\})=0.$$

 \end{remark}
 
 
 \section{The vector-valued  measure $\Delta_p\uu$}\label{sec:measure}
 
 Let $0\le \zeta \in C^\infty_0(\Omega)$ be a test function, and define the measure $\lambda^i$ by
\[
\int\zeta\,d\lambda^i = -\int |\nabla u^i|^{p-2} \nabla u^i\cdot\nabla\zeta\, dx ,
\]
which in virtue of Lemma \ref{rem:coopertive-property} is a bounded non-negative  measure, i.e. a Radon measure.
Obviously $\lambda^i $ is the formal way of expressing $\Delta_p u^i$ in $\Omega $.

Since each $u^i$ is $p$-subharmonic in $\Omega$ and $u^i\ge0$ we have that $\lambda^i$ is a positive Radon measure. Because $u^i$ is also $p$-harmonic in $\{u^i>0\}$ we have that the support of $\lambda^i$ is in  
$\Omega\cap\partial\{u^i>0\}\subseteq\Omega\cap\partial\{|\uu|>0\}$.\footnote{Observe that $u^i$ may be zero in some component of $\{|\uu|>0\}$.}
Let us define    
\[
\Lambda = \Lambda^{\uu}:=\sum_{i=1}^m\lambda^i.
\]

\begin{theorem}\label{thm:measure-laplacian}
For any $K\Subset\Omega$ there exist constants $c,C>0$ such that for any (local) minimizer $\uu$
\[
cr^{n-1}\le \int_{B_r} d\Lambda \le Cr^{n-1}
\]
for any ball $B_r\subset K$ with $x\in \partial\{|\uu|>0\}$.
\end{theorem}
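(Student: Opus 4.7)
The plan is to treat the two bounds separately. For the upper bound, I would use a standard cutoff $\zeta\in C_c^\infty(B_{2r}(x))$ with $\zeta\equiv 1$ on $B_r(x)$, $0\le\zeta\le 1$, and $|\nabla\zeta|\le C/r$ (after possibly slightly enlarging $K$ so that $B_{2r}(x)$ lies in a compact subset of $\Omega$; if $r$ is comparable to $\dist(K,\partial\Omega)$ the estimate is trivial by choosing $C$ large). The defining identity for $\lambda^i$ gives
\[
\lambda^i(B_r(x))\le\int\zeta\,d\lambda^i=-\int|\nabla u^i|^{p-2}\nabla u^i\cdot\nabla\zeta\,dy\le C\,\|\nabla u^i\|_{L^\infty(B_{2r})}^{p-1}r^{n-1},
\]
and Theorem \ref{thm:Liptschitz-regularity} bounds $\|\nabla u^i\|_{L^\infty(B_{2r})}$ by a universal constant. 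Summing over $i$ yields $\Lambda(B_r(x))\le Cr^{n-1}$.

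For the lower bound I would argue by contradiction and blow-up, reproducing the template of the proof of Theorem \ref{thm:porosity}. Assume there exist local minimizers $\uu_k$, free boundary points $x_k\in K$, and radii $r_k>0$ with $B_{r_k}(x_k)\subset K$ and
\[
\Lambda_k(B_{r_k}(x_k))/r_k^{n-1}\longrightarrow 0.
\]
Define $\tilde\uu_k(y):=\uu_k(x_k+r_k y)/r_k$. A direct change of variables shows $\tilde\uu_k$ is a minimizer of the analogous functional with coefficient $\tilde Q_k(y):=Q(x_k+r_k y)\in[Q_{\min},Q_{\max}]$, that $0\in\partial\{|\tilde\uu_k|>0\}$, and that the rescaled measures satisfy $\tilde\Lambda_k(B_1)\to 0$. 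By Theorem \ref{thm:Liptschitz-regularity} applied before rescaling, the $\tilde\uu_k$ are uniformly Lipschitz on $B_1$, so along a subsequence $\tilde\uu_k\to\tilde\uu_0$ uniformly and weakly in $W^{1,p}(B_1)$.

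The heart of the argument is to show $\tilde\uu_0\equiv\mathbf{0}$ in $B_1$, in contradiction with the nondegeneracy Lemma \ref{lem:nondegeneracy} applied to $\tilde\uu_k$ at the free-boundary point $0$ (which gives $\|\tilde\uu_k\|_{L^\infty(B_1)}\ge c>0$). Fix $\rho<1$ and let $\tilde v_k^i$ be the $p$-harmonic replacement of $\tilde u_k^i$ in $B_\rho$. Since $\tilde v_k^i\ge\tilde u_k^i$ (by $p$-subharmonicity from Lemma \ref{rem:coopertive-property}) and $\tilde v_k^i-\tilde u_k^i\in W^{1,p}_0(B_\rho)$, testing both equations with $\tilde v_k^i-\tilde u_k^i$ yields
\[
\int_{B_\rho}\bigl(|\nabla\tilde v_k^i|^{p-2}\nabla\tilde v_k^i-|\nabla\tilde u_k^i|^{p-2}\nabla\tilde u_k^i\bigr)\cdot\nabla(\tilde v_k^i-\tilde u_k^i)\,dy=\int_{B_\rho}(\tilde v_k^i-\tilde u_k^i)\,d\tilde\lambda_k^i\le C\,\tilde\Lambda_k(B_\rho)\to 0,
\]
where $C$ absorbs the uniform $L^\infty$ bound on $\tilde v_k^i-\tilde u_k^i$. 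The monotonicity inequality used in Lemma \ref{harmonic-extension} then forces $\nabla\tilde u_k^i-\nabla\tilde v_k^i\to 0$ in $L^p(B_\rho)$ (for $1<p\le 2$ via Hölder together with the uniform Lipschitz bound, exactly as in that lemma). Hence $\tilde u_0^i$, being the uniform limit of the $p$-harmonic $\tilde v_k^i$, is itself $p$-harmonic in $B_\rho$; combined with $\tilde u_0^i\ge 0$ and $\tilde u_0^i(0)=0$ the strong minimum principle gives $\tilde u_0^i\equiv 0$. Letting $\rho\nearrow 1$ we get $\tilde\uu_0\equiv\mathbf{0}$, the desired contradiction. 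The most delicate step will be carrying out the monotonicity estimate uniformly across the two regimes $p\ge 2$ and $1<p\le 2$ and verifying that the uniform Lipschitz control of Theorem \ref{thm:Liptschitz-regularity} is truly scaling-invariant on $B_1$ for the sequence $\tilde\uu_k$.
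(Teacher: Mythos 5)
Your upper bound is essentially the paper's own argument (Lipschitz bound from Theorem \ref{thm:Liptschitz-regularity} paired with a cutoff at scale $r$; the paper uses a cutoff supported in $B_{r+\epsilon}$ and lets $\epsilon\to 0$, which is only cosmetically different from your $B_{2r}$ cutoff). For the lower bound you follow the same skeleton as the paper --- contradiction, rescaling to unit scale, uniform Lipschitz compactness, showing the blow-up limit is $p$-harmonic hence $\equiv 0$ by the strong minimum principle, and contradicting the nondegeneracy of Lemma \ref{lem:nondegeneracy} --- but the mechanism for the key step (that the limit is $p$-harmonic) is genuinely different. The paper extracts a weak-$*$ limit $g_0^i$ of the stress fields $|\nabla u_k^i|^{p-2}\nabla u_k^i$ and identifies $g_0^i=|\nabla u_0^i|^{p-2}\nabla u_0^i$ by splitting $B_{1/2}$ into the set $\{|\uu_0|>0\}$ (local $C^{1,\alpha}$ convergence), the interior of $\{|\uu_0|=0\}$ (nondegeneracy), and $\partial\{|\uu_0|>0\}$, for which it must prove a zero-Lebesgue-measure statement; only then does it test with $\zeta$ and use $\Lambda_k(B_1)\to 0$. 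You instead pair the vanishing mass $\tilde\Lambda_k(B_\rho)\to 0$ directly against the nonnegative, uniformly bounded function $\tilde v_k^i-\tilde u_k^i$ (the $p$-harmonic replacement dominates $\tilde u_k^i$ by comparison, since $\tilde u_k^i$ is $p$-subharmonic by Lemma \ref{rem:coopertive-property}), and then run the strong monotonicity/H\"older estimates from the proof of Lemma \ref{harmonic-extension} to get $\nabla(\tilde v_k^i-\tilde u_k^i)\to 0$ in $L^p(B_\rho)$, so $\tilde u_0^i$ inherits $p$-harmonicity from $\tilde v_k^i$. This buys a shorter argument, avoiding both the weak-$*$ identification and the measure-zero property of the limiting free boundary; what it costs is one routine step you should record explicitly: the defining identity for $\tilde\lambda_k^i$ is stated for $\phi\in C_0^\infty$, so its use with $\phi=\tilde v_k^i-\tilde u_k^i\in W_0^{1,p}(B_\rho)\cap C(\overline{B_\rho})$ requires an approximation (e.g.\ test with smoothings of $(\phi-\epsilon)_+$ and use monotone convergence), and Poincar\'e plus $C^1_{\rm loc}$ compactness of the $p$-harmonic $\tilde v_k^i$ should be invoked to identify their limit with $\tilde u_0^i$. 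The scaling bookkeeping you flagged is fine: $\tilde\Lambda_k(B_1)=r_k^{1-n}\Lambda_k(B_{r_k}(x_k))$, and the Lipschitz and nondegeneracy constants are scale-invariant, exactly as the paper itself uses when it reduces to $r=1$.
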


\begin{proof}
 Let $0\le  \zeta_\epsilon \in C^\infty_0(B_{r+\epsilon})$ be a suitable test function, such that $ \zeta_\epsilon= 1$ on $B_r$ and $|\nabla \zeta_\epsilon|\le 2/\epsilon$.
Then
\[
\int \zeta_\epsilon \,d\lambda^i = -\int |\nabla u^i|^{p-2} \nabla u^i\cdot\nabla  \zeta_\epsilon\,dx
= -\int_{B_{r+ \epsilon} \setminus B_r} |\nabla u^i|^{p-2} \nabla u^i\cdot\nabla  \zeta_\epsilon\,dx \le Cr^{n-1},
\]
where in the last inequality we have used that $u$ is Lipschitz. 
Letting $\epsilon$ tend to zero,  we arrive at 
\[
\int_{B_r}\,d\lambda^i \leq Cr^{n-1}.
\]

To prove the estimate from below, we argue indirectly.
It also suffices  to consider the case $r=1$. Assume there is a sequence of minimizers $\uu_k$ in the unit ball $B_1(0)$, such that $0\in\partial\{|\uu_k|>0\}$ and for the measures $\Lambda_k := \Lambda^{\uu_k}  $ we have 
\[
\e_k:=\Lambda_k(B_1)\ra0.
\]
Since the functions $\uu_k$ are uniformly Lipschitz continuous, we may assume that $\uu_k\ra\uu_0$ in $B_{1/2}$, where $\uu_0$ is Lipschitz continuous as well. 
We may also extract a subsequence (still denote by $\uu_k$) such that $g_k^i:=|\nabla u^i_k|^p\nabla u^i_k\ra g_0^i$ weakly-$*$ in $L^\infty(B_{1/2})$ for all $i=1,\dots,m$. We claim that 
\begin{equation}\label{thm:mesr-lap-claim1}
g_0^i=|\nabla u^i_0|^p\nabla u^i_0,\quad\text{ in }B_{1/2}.
\end{equation}
Suppose this is true, then  for every positive test function $\zeta\in C_0^\infty(B_{1/2})$ one has
\[\begin{split}
-\int_{B_{1/2}}|\nabla u^i_0|^{p-2}\nabla u^i_0\cdot\nabla\zeta\,dx =& -\lim_{k\ra\infty}\int_{B_{1/2}}|\nabla u^i_k|^{p-2}\nabla u^i_k\cdot\nabla\zeta\,dx \\
=&\lim_{k\ra\infty}\int_{B_{1/2}}\zeta\,d\lambda^i_k\leq \norm{\zeta}_{L^\infty(B_{1/2})}\lim_{k\ra\infty}\e_k=0.
\end{split}\]
Thus $\lambda_0^i=0$ and $u^i_0$ is $p$-harmonic for all $i=1,\dots,m$ (note that $u^i_0$ is the limit of a sequence of $p$-subharmonic functions and we  already know that it is $p$-subharmonic).
Since $u_0^i\ge0$ and $u_0^i(0)=0$, by  the minimum  principle, we have 
$u_0^i\equiv0$ in $B_{1/2}$.
 
On the other hand, by nondegeneracy property (Lemma \ref{lem:nondegeneracy}) and that $0\in\partial\{|\uu_k|>0\}$ we have 
\[
\norm{\uu_k}_{L^\infty(B_{1/4})}\geq c>0.
\]
Therefore, a similar inequality holds for $\uu_0$ and we arrive at  a contradiction. 

To close the argument we need to prove \eqref{thm:mesr-lap-claim1}.
In fact, if $\overline{B_\rho}=\overline{B_\rho(y)}\subset\{|\uu_0|>0\}$ then $B_\rho\subset\{|\uu_k|>0\}$ for sufficiently large $k$ and $u_k^i$ are $p$-harmonic in $B_\rho$ for all $i=1,\dots,m$, (see Lemma \ref{rem:coopertive-property}).
Therefore, one can extract a subsequence of $\uu_k$ locally converging to $\uu_0$ in $C^{1,\alpha}(B_\rho)$.
Hence, $g_0^i=|\nabla u^i_0|^p\nabla u^i_0$ in $B_\rho$ for all $i=1,\dots,m$.
Next, if $B_\rho\subset\{|\uu_0|=0\}$ then for any $\kp<1$ the nondegeneracy property entails that $B_{\kp\rho}\subset\{|\uu_k|=0\}$ for sufficiently large $k=k(\kp)$. Thus $g_0^i=0=|\nabla u^i_0|^p\nabla u^i_0$. 
We just need to show $\cL^n(\partial\{|\uu_0|>0\}\cap B_{1/2})=0$. If $x_0\in\partial\{|\uu_0|>0\}\cap B_{1/2}$, then $\uu_0(x_0)=0$. 
Choose $x_k\in \partial\{|\uu_k|>0\}\cap B_{1}$ such that $|x_k-x_0|=\dist(x_0,\partial\{|\uu_k|>0\})$, then relation \eqref{corol:nondegeneracy} yields that $|x_k-x_0|\ra0$. Apply Lemma \ref{lem:nondegeneracy} to obtain
\[
\norm{\uu_k}_{L^\infty(B_{2r}(x_0))} \ge \norm{\uu_k}_{L^\infty(B_r(x_k))} \geq cr,
\]
for any ball $B_r(x_0)\subset B_{1/2}$ and sufficiently large $k$.
Passing to the limit we get  the same inequality for $\uu_0$,
\[
\norm{\uu_0}_{L^\infty(B_{2r}(x_0))}  \geq cr.
\] 
This along with the Lipschitz continuity of $\uu_0$ is enough to prove that $\cL^n(B_r(x_0)\cap\{|\uu_0|>0\})\geq c\cL^n(B_r)$ for some $c>0$. (see the first part of the proof of Theorem \ref{thm:porosity}).
This implies that $\cL^n(\partial\{|\uu_0|>0\}\cap B_{1/2})=0$ (see Remark \ref{rem:free-bndry-measure-zero}).
\end{proof}

The next theorem follows easily from Theorem \ref{thm:measure-laplacian}. The proof is the same as the proof of Theorem 4.5 in \cite{alt1981existence}.

\begin{theorem}\label{thm:Hausdorf-measure-fb}
Let $\uu$ be a (local) minimizer in $\Omega$. Then
\begin{enumerate}[(i)]
\item For every $K\Subset\Omega$ we have $\cH^{n-1}(K\cap\partial\{|\uu|>0\})<\infty$.

\item There exist  nonnegative Borel functions $q^i$ such that 
\[
\Delta_p u^i=q^i\cH^{n-1}\lfloor\,\partial\{|\uu|>0\},
\]
that is for every $\zeta\in C_0^\infty(\Omega)$
\[
-\int_\Omega|\nabla u^i|^{p-2}\nabla u^i\cdot\nabla\zeta\,dx=\int_{\Omega\cap\partial\{|\uu|>0\}}\zeta q^i d\cH^{n-1}.
\]

\item For any $K\Subset\Omega$ there exist constants $c,C>0$ such that 
\[
c\leq \sum_{i=1}^m q^i\leq C,
\]
and for $B_r(x)\subset K$ with $x\in\partial\{|\uu|>0\}$ we have
\[
cr^{n-1}\leq \cH^{n-1}(B_r(x)\cap\partial\{|\uu|>0\})\leq Cr^{n-1}.
\]
\end{enumerate}
\end{theorem}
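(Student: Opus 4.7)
The plan is to combine the two-sided density estimates
\[
cr^{n-1}\le \Lambda(B_r(x))\le Cr^{n-1}, \qquad x\in F:=\partial\{|\uu|>0\},\ B_r(x)\subset K,
\]
from Theorem \ref{thm:measure-laplacian} with the classical Frostman-type comparison between a Radon measure and Hausdorff measure: if a Radon measure $\mu$ satisfies $\limsup_{r\to 0}\mu(B_r(x))/r^{n-1}\le C$ for every $x$ in a Borel set $E$, then $\mu\lfloor E\le C'\cH^{n-1}\lfloor E$, and a uniform lower density bound yields a reverse comparison. With these two comparisons in hand the three conclusions fall out directly.

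For part (i), the upper comparison applied to $\mu=\Lambda$ on $E=K\cap F$ gives $\cH^{n-1}(K\cap F)\le C'\Lambda(K)<\infty$. A more hands-on derivation uses a Vitali covering of $K\cap F$ by small balls $B_{r_j}(x_j)$ centered on $F$; after extracting a disjoint subfamily whose $5$-fold enlargements still cover, the upper bound $\Lambda(B_{r_j})\le Cr_j^{n-1}$ gives $\sum r_j^{n-1}\le C\Lambda(K')$ for a slight enlargement $K\Subset K'\Subset\Omega$, and passing to the infimum over covers (as $\delta\to 0$) yields $\cH^{n-1}(K\cap F)\le C\Lambda(K')<\infty$.

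For parts (ii) and (iii), each $\lambda^i\le\Lambda$ inherits $\lambda^i(B_r(x))\le Cr^{n-1}$, so the upper Frostman comparison gives $\lambda^i\lfloor F\le C\cH^{n-1}\lfloor F$. In particular $\lambda^i\ll\cH^{n-1}\lfloor F$, and the Radon--Nikodym theorem provides nonnegative Borel densities $q^i$ with $\lambda^i=q^i\cH^{n-1}\lfloor F$, giving (ii). Applying the lower comparison to $\Lambda$ gives $c\cH^{n-1}\lfloor F\le\Lambda$, whence $\sum q^i\ge c$ at $\cH^{n-1}$-a.e.\ point of $F$, while the upper comparison gives $\sum q^i\le C$. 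The two-sided Hausdorff density on balls then follows by sandwiching:
\[
\frac{c}{C}\,r^{n-1}\le\frac{\Lambda(B_r(x))}{C}\le\cH^{n-1}(B_r(x)\cap F)\le\frac{\Lambda(B_r(x))}{c}\le\frac{C}{c}\,r^{n-1}.
\]

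The proof is essentially bookkeeping once the measure-theoretic estimates on $\Lambda$ are in hand. The main subtle point is to justify $c\le\sum q^i\le C$ \emph{pointwise} $\cH^{n-1}$-a.e.\ rather than merely on average over small balls, which requires invoking the Besicovitch differentiation theorem for $\cH^{n-1}\lfloor F$; this is legitimate precisely because part (i) shows the restricted measure $\cH^{n-1}\lfloor F$ is Radon on $K$. The overall structure parallels \cite[Thm.~4.5]{alt1981existence}, as noted by the authors.
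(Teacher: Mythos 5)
Your overall route is the same as the paper's: the authors simply refer to the proof of Theorem 4.5 in \cite{alt1981existence}, which is precisely this combination of the two-sided estimate $cr^{n-1}\le\Lambda(B_r(x))\le Cr^{n-1}$ from Theorem \ref{thm:measure-laplacian} with a Frostman-type comparison, a Vitali covering, Radon--Nikodym, and differentiation of measures. Parts (ii) and (iii) of your argument are carried out correctly, including the subtle point about Besicovitch differentiation needing $\cH^{n-1}\lfloor\partial\{|\uu|>0\}$ to be locally Radon.

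There is, however, a directional slip in your part (i), repeated in both versions of the argument. Finiteness of $\cH^{n-1}(K\cap\partial\{|\uu|>0\})$ comes from the \emph{lower} density estimate: for the disjoint Vitali subfamily $\{B_{r_j}(x_j)\}$ one writes $r_j^{n-1}\le c^{-1}\Lambda(B_{r_j}(x_j))$ and sums over the disjoint balls to get $\sum_j r_j^{n-1}\le c^{-1}\Lambda(K')$, after which the dilated balls give the bound on $\cH^{n-1}_\delta$. Your text attributes this to the upper bound $\Lambda(B_{r_j})\le Cr_j^{n-1}$, which only yields $\sum_j\Lambda(B_{r_j})\le C\sum_j r_j^{n-1}$ and controls nothing; similarly, your opening sentence of (i) invokes the ``upper comparison'' $\Lambda\lfloor\partial\{|\uu|>0\}\le C'\,\cH^{n-1}\lfloor\partial\{|\uu|>0\}$, which again runs the wrong way and cannot bound $\cH^{n-1}$ by $\Lambda$. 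The upper density estimate is what you need for the absolute continuity in (ii) and for $\sum_i q^i\le C$, and you use it correctly there, just as you use the lower estimate correctly for $c\le\sum_i q^i$ and the sandwich in (iii). Since both halves of the density estimate are available, the fix is immediate---exchange the two citations in (i)---but as written that step does not prove what is claimed.
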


\begin{remark}
From $(i)$ in Theorem \ref{thm:Hausdorf-measure-fb} it follows that, locally,  the set $A=\Omega\cap \{|\uu|>0\}$  has finite perimeter in $\Omega$ in sense of that  $\mu_{\uu}=-\nabla\chi_A$ is a Borel measure and the total variation $|\mu_\uu|$ is a Radon measure. 
We define the reduced boundary of $A$ by
\[
\partial_{\rm{red}}A=\{x\in \Omega:|\nu_\uu|=1\},
\]
where $\nu_\uu(x)$ is the unique unit vector with
\[
\int_{B_r(x)}\left|\chi_A-\chi_{\{y:(y-x)\cdot\nu_\uu(x)<0\}}\right|=o(r^n),\quad\text{ as }r\ra0,
\]
if such a vector exists, and $\nu_\uu(x)=0$ otherwise.  See \cite{federer2014geometric}, Chapter 4, for more details.
\end{remark}


\section{Local Analysis}\label{sec:analysis}

To proceed, we will need some properties of the so-called blow-up limits.
\begin{lemma}\label{lem:blow-up}
Let $\uu$ be a (local) minimizer in $\Omega$, $K\Subset \Omega$ and $B_{r_k}(x_k)\subset K$ be a sequence of balls with $r_k\ra0$, $x_k\ra x_0\in\Omega$, and $\uu(x_k)=0$.
Consider the blow-up sequence
\begin{equation}\label{blowup1}
\uu_k(x)=\frac1{r_k}\uu(x_k+r_k x).
\end{equation}
For a subsequence, there is a limit $\uu_0$ such that
\begin{align}
\uu_k\ra \uu_0 & \ \text{ in } C^{0,\alpha}_{\rm{loc}}(\bR^n;\bR^m)\text{ for every }0<\alpha<1,\notag\\
\nabla \uu_k\ra \nabla \uu_0 & \ \text{ a.e. in }\bR^n,\label{blow-up-limit-rel2}\\
\partial\{|\uu_k|>0\}\ra\partial\{|\uu_0|>0\}&\ \text{ locally in the Hausdorff distance,}\label{blow-up-limit-rel3}\\
\chi_{\{|\uu_k|>0\}}\ra \chi_{\{|\uu_0|>0\}}&\ \text{ in }L^1_{\rm{loc}}(\bR^n;\bR^m),\label{blow-up-limit-rel4}\\
\text{ if }x_k\in\partial\{|\uu_k|>0\}&\ \text{ then }0\in\partial\{|\uu_0|>0\}.\notag
\end{align}
\end{lemma}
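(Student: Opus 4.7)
The plan is to leverage three facts: each rescaled function $\uu_k$ is itself a (local) minimizer on the dilated domain $(\Omega-x_k)/r_k$ of the functional with coefficient $Q_k(x) = Q(x_k + r_k x) \in [Q_{\min},Q_{\max}]$; the uniform Lipschitz bound of Theorem \ref{thm:Liptschitz-regularity}; and the nondegeneracy of Lemma \ref{lem:nondegeneracy}. All constants depend only on $n,m,p,Q_{\min},Q_{\max}$, so they are uniform in $k$ on every compact subset of $\bR^n$. From the uniform Lipschitz bound and $\uu_k(0)=0$, Arzel\`a--Ascoli yields a subsequence with $\uu_k\to\uu_0$ in $C^{0,\alpha}_{\mathrm{loc}}$ for every $\alpha<1$, and $\nabla\uu_k\weak\nabla\uu_0$ weakly-$*$ in $L^\infty_{\mathrm{loc}}$; in particular $\uu_0$ is Lipschitz.

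Next I would establish the Hausdorff convergence \eqref{blow-up-limit-rel3}. For the inclusion $\partial\{|\uu_0|>0\}\subseteq\liminf \partial\{|\uu_k|>0\}$, if $y_0\in\partial\{|\uu_0|>0\}$ but $B_r(y_0)\cap\partial\{|\uu_k|>0\}=\emptyset$ along a subsequence, then $B_r(y_0)$ sits entirely in $\{|\uu_k|=0\}$ (forcing $\uu_0\equiv 0$ there by uniform convergence, a contradiction) or entirely in $\{|\uu_k|>0\}$ (whence each $u^i_k$ is $p$-harmonic by Lemma \ref{rem:coopertive-property}, so each $u^i_0$ is $p$-harmonic, and the strong minimum principle at $y_0$ with $\uu_0(y_0)=0$ again yields $\uu_0\equiv 0$, a contradiction). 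Conversely, if $y_k\in\partial\{|\uu_k|>0\}$ and $y_k\to y_0$, then $\uu_k(y_k)=0$ passes to $\uu_0(y_0)=0$, while Lemma \ref{lem:nondegeneracy} provides $\norm{\uu_k}_{L^\infty(B_r(y_k))}\ge cr$ for small $r$, uniformly in $k$, and uniform convergence transfers this lower bound to $\uu_0$, so $y_0\in\partial\{|\uu_0|>0\}$.

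With Hausdorff convergence in hand, \eqref{blow-up-limit-rel4} follows: off the null set $\partial\{|\uu_0|>0\}$ (Remark \ref{rem:free-bndry-measure-zero}), points in $\{|\uu_0|>0\}$ lie in $\{|\uu_k|>0\}$ for large $k$, and points in the interior of $\{|\uu_0|=0\}$ satisfy $\uu_k\equiv 0$ on a small ball for large $k$ by the nondegeneracy argument described below. For \eqref{blow-up-limit-rel2}, it suffices to argue off the null set $\partial\{|\uu_0|>0\}\cup\bigcup_k\partial\{|\uu_k|>0\}$. On any compact subset of $\{|\uu_0|>0\}$, Hausdorff convergence together with Lemma \ref{rem:coopertive-property} ensures that, for large $k$, each component $u^i_k$ is either identically zero or positive and $p$-harmonic on a fixed neighborhood; the interior $C^{1,\alpha}$ estimates for $p$-harmonic functions then promote the uniform convergence to $C^{1,\alpha}_{\mathrm{loc}}$ and yield pointwise convergence of gradients. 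On the interior of $\{|\uu_0|=0\}$, if $B_\rho(y)\subset\{|\uu_0|=0\}$ then uniform convergence gives $\norm{\uu_k}_{L^\infty(B_{\rho/2}(y))}<c(\rho/2)$ for large $k$, and Lemma \ref{lem:nondegeneracy} forces $\uu_k\equiv 0$ on a smaller ball, hence $\nabla\uu_k\equiv 0=\nabla\uu_0$ there.

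Finally, the concluding assertion follows by observing that $x_k\in\partial\{|\uu|>0\}$ translates to $0\in\partial\{|\uu_k|>0\}$, and applying the second direction of the Hausdorff argument with $y_k=0$ gives $0\in\partial\{|\uu_0|>0\}$. The principal obstacle is the a.e. gradient convergence \eqref{blow-up-limit-rel2}: the Lipschitz bound alone delivers only weak-$*$ convergence of $\nabla\uu_k$, and strong convergence has to be recovered separately on the positivity set (via classical $p$-harmonic regularity, made available only after Hausdorff convergence has been proved) and on the interior of the zero set (via nondegeneracy-induced vanishing), while the intermediate free boundary is negligible.
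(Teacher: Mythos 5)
The paper itself offers no proof of this lemma (it defers to \cite{alt1981existence} and \cite{alt1984free}), and your argument is essentially the standard compactness proof from those references: scale invariance of the functional with $Q_{\min}\le Q_k\le Q_{\max}$, uniform Lipschitz bounds plus Arzel\`a--Ascoli for the $C^{0,\alpha}_{\rm loc}$ limit, the dichotomy argument (strong minimum principle on one side, uniform convergence on the other) combined with the nondegeneracy of Lemma \ref{lem:nondegeneracy} for the local Hausdorff convergence \eqref{blow-up-limit-rel3}, and then interior $C^{1,\alpha}$ estimates for $p$-harmonic functions on $\{|\uu_0|>0\}$ together with nondegeneracy-forced vanishing on ${\rm int}\{|\uu_0|=0\}$ for \eqref{blow-up-limit-rel2} and \eqref{blow-up-limit-rel4}. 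All of that is correct, with constants uniform in $k$.

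One step needs repair. Both \eqref{blow-up-limit-rel2} and \eqref{blow-up-limit-rel4} rest on $\cL^n(\partial\{|\uu_0|>0\})=0$, which you justify by citing Remark \ref{rem:free-bndry-measure-zero}. That remark (via Theorem \ref{thm:porosity}) applies to minimizers, and at this stage $\uu_0$ is not known to be one: minimality of the blow-up limit is Lemma \ref{lem:blow-up-minimizer}, which comes after the present lemma and whose proof uses precisely \eqref{blow-up-limit-rel2} and \eqref{blow-up-limit-rel4}. As written the citation is therefore circular. The fix is short and uses only tools you already deploy: given $y_0\in\partial\{|\uu_0|>0\}$, take $y_k\in\partial\{|\uu_k|>0\}$ with $y_k\to y_0$ (available from \eqref{blow-up-limit-rel3}); nondegeneracy produces $z_k\in B_{r/2}(y_k)$ with $|\uu_k(z_k)|\ge cr$, and the uniform Lipschitz bound keeps $|\uu_k|\ge cr/2$ on a ball of radius comparable to $r$ around $z_k$, so in the limit $\cL^n\bigl(B_{2r}(y_0)\cap\{|\uu_0|>0\}\bigr)\ge c'\cL^n(B_r)$. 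Hence $\partial\{|\uu_0|>0\}$ has upper volume density at most $1-c'<1$ at each of its points, and the Lebesgue density theorem gives $\cL^n(\partial\{|\uu_0|>0\})=0$; this is exactly the transfer argument the paper carries out for a different limit inside the proof of Theorem \ref{thm:measure-laplacian}. With that substitution (and the routine covering argument upgrading your two pointwise inclusions to locally uniform Hausdorff convergence), your proof is complete.
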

\begin{proof}
For the proof we refer to \cite{alt1981existence} and \cite{alt1984free}. 
\end{proof}

The following lemma shows that the blow-up limit is a minimizer in any ball.

\begin{lemma}\label{lem:blow-up-minimizer}
If $\uu(x_k)=0$ and $x_k\ra x_0$, then any blow-up limit $\uu_0= \lim_k \uu_k$ (see \eqref{blowup1}) with respect to $B_{r_k}(x_k)$ is an absolute minimizer of $J_0$ in any ball $B_R=B_R(0)$, where 
$$J_0(\vv):=\int_{B_R}\sum_{i=1}^m|\nabla v^i|^p+Q(x_0)^p\chi_{\{|\vv|>0\}}\,dx.$$
\end{lemma}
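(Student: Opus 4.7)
The plan is to rescale the minimality of $\uu$ on the balls $B_{r_k(R+\delta)}(x_k)$ to a sequence of minimization problems on $B_{R+\delta}$, and then pass to the limit via a standard cut-and-paste competitor. Fix $\delta>0$ small and an arbitrary admissible competitor $\vv\in W^{1,p}(B_R;\bR^m)$ with $v^i\ge 0$ and $\vv=\uu_0$ on $\partial B_R$; the aim is to show $J_0(\uu_0)\le J_0(\vv)$. For $k$ large, $B_{r_k(R+\delta)}(x_k)\Subset\Omega$, and the change of variables $y=(x-x_k)/r_k$ turns the minimality of $\uu$ on that ball into the fact that $\uu_k$ minimizes
\[
J_k(\ww):=\int_{B_{R+\delta}}\sum_{i=1}^m|\nabla w^i|^p+Q_k(y)^p\,\chi_{\{|\ww|>0\}}\,dy,\qquad Q_k(y):=Q(x_k+r_k y),
\]
among admissible $\ww$ with $\ww=\uu_k$ on $\partial B_{R+\delta}$. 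Since $Q$ is H\"older continuous, $Q_k\to Q(x_0)$ uniformly on $B_{R+\delta}$.

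To construct competitors for $\uu_k$ out of $\vv$, extend $\vv$ by $\uu_0$ on $B_{R+\delta}\setminus B_R$ to obtain $\tilde\vv\in W^{1,p}(B_{R+\delta};\bR^m)$ (traces match because $\vv=\uu_0$ on $\partial B_R$), and pick a cutoff $\eta\in C_c^\infty(B_{R+\delta})$ with $\eta\equiv 1$ on $B_R$, $0\le\eta\le 1$, $|\nabla\eta|\le C/\delta$. Set
\[
\vv_k:=\eta\tilde\vv+(1-\eta)\uu_k.
\]
Each component of $\vv_k$ is nonnegative (convex combination of nonnegative functions), $\vv_k=\uu_k$ near $\partial B_{R+\delta}$, and $\vv_k=\vv$ on $B_R$. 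On the annulus $A_\delta:=B_{R+\delta}\setminus B_R$ the uniform Lipschitz estimate of Theorem \ref{thm:Liptschitz-regularity} (valid on a neighborhood of $x_0$) together with $\uu_k\to\uu_0$ uniformly makes $|\nabla\vv_k|$ uniformly bounded there, independently of $k$.

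Applying $J_k(\uu_k)\le J_k(\vv_k)$, splitting both sides over $B_R$ and $A_\delta$ and dropping the nonnegative piece $J_k(\uu_k;A_\delta)$ on the left gives
\[
J_k(\uu_k;B_R)\le J_k(\vv_k;B_R)+J_k(\vv_k;A_\delta).
\]
On the right, $\vv_k=\vv$ on $B_R$ and dominated convergence (using $Q_k\to Q(x_0)$ uniformly) give $J_k(\vv_k;B_R)\to J_0(\vv;B_R)$, while $|\nabla\vv_k|\le C$ and $Q_k\le Q_{\max}$ yield $J_k(\vv_k;A_\delta)\le C\delta$. On the left, Fatou's lemma applied to the a.e.\ gradient convergence \eqref{blow-up-limit-rel2} yields $\int_{B_R}|\nabla u_0^i|^p\le\liminf_k\int_{B_R}|\nabla u_k^i|^p$, and the $L^1_{\rm loc}$ convergence \eqref{blow-up-limit-rel4} combined with the uniform convergence of $Q_k$ gives $\int_{B_R}Q_k^p\chi_{\{|\uu_k|>0\}}\to Q(x_0)^p|\{|\uu_0|>0\}\cap B_R|$. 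Hence $\liminf_k J_k(\uu_k;B_R)\ge J_0(\uu_0;B_R)$. Letting $k\to\infty$ and then $\delta\to 0$ yields $J_0(\uu_0;B_R)\le J_0(\vv;B_R)$, which is the claim.

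The main technical point is the lower semicontinuity of the gradient energy for $\uu_k$ on $B_R$: it rests entirely on the a.e.\ gradient convergence \eqref{blow-up-limit-rel2} from the blow-up lemma (Fatou suffices, though dominated convergence also applies via the uniform Lipschitz bound). The indicator term is controlled by \eqref{blow-up-limit-rel4} and the continuity of $Q$, while the annulus contribution is only a $O(\delta)$ error that disappears in the final limit.
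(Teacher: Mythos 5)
Your proof is correct and follows essentially the same strategy as the paper: rescale the minimality of $\uu$ to a functional $J_k$ for $\uu_k$, glue the competitor to $\uu_k$ via a cutoff in a thin annulus near $\partial B_R$, use the a.e.\ gradient convergence \eqref{blow-up-limit-rel2} and \eqref{blow-up-limit-rel4} for lower semicontinuity on the left, and absorb the annulus contribution as an error that vanishes in the final limit. The only cosmetic difference is that you interpolate on an outer annulus $B_{R+\delta}\setminus B_R$ with a convex combination, whereas the paper interpolates on an inner annulus $B_R\setminus B_r$ with the truncated competitor $(\ww+(1-\eta)(\uu_k-\uu_0))_+$ and lets $r\to R$.
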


\begin{proof}
Let $\ww\in W^{1,p}(B_R;\bR^m)$ be such that $w^i\ge0$ for $i=1,\dots,m$ and $\ww=\uu_0$ on $\partial B_R$.
To show $J_0(\uu_0)\le J_0(\ww)$, we
choose a cut of function $\eta\in C_0^\infty(B_R)$ with $0\le \eta\le 1$ and $\eta=1$ in $B_r$ for some $0<r<R$, and   define 
\[
\ww_k=\left(\ww+(1-\eta)(\uu_k-\uu_0)\right)_+,
\]
where the positive part is taken separately for each component.
We also  have $\ww_k=\uu_k$ on $\partial B_R$. Since $\uu$ is (local) minimizer, for sufficiently large $k$ such that $B_{Rr_k}(x_k)\Subset\Omega$, we have 
\[
\int_{B_{R}}\sum_{i=1}^m|\nabla u^i_k|^p+Q_k^p\chi_{\{|\uu_k|>0\}}\,dx \leq \int_{B_{R}}\sum_{i=1}^m|\nabla w^i_k|^p+Q_k^p\chi_{\{|\ww_k|>0\}}\,dx,
\]
where $Q_k(x):=Q(x_k+r_kx)$.
Since $|\nabla\uu_k|\leq C$ (due to Lipschitz continuity of $\uu$) and convergences  \eqref{blow-up-limit-rel2} and \eqref{blow-up-limit-rel4}, 
the limit of the  left hand side will be $J_0(\uu_0)$.  Hence  
\begin{align*}
J_0(\uu_0)\le & \liminf_{k\ra\infty} \int_{B_R}\sum_{i=1}^m|\nabla w_k^i|^p + Q_k^p\chi_{\{|\ww_k|>0\}}\,dx\\
\leq & \int_{B_R}\sum_{i=1}^m|\nabla w^i|^p \,dx + \int_{B_r} Q(x_0)^p\chi_{\{|\ww|>0\}}\,dx 
+ \liminf_{k\ra\infty}\int_{B_R\setminus B_r} Q_k^p\chi_{\{|\ww_k|>0\}}\,dx \\
\le &\int_{B_R}\sum_{i=1}^m|\nabla w^i|^p\,dx + \int_{B_r}Q(x_0)^p\chi_{\{|\ww|>0\}}\,dx + Q(x_0)^p|B_R\setminus B_r|.
\end{align*}
Now let $r\ra R$, we get $J_0(\uu_0)\le J_0(\ww)$.
\end{proof}

\begin{lemma}\label{lem:domain-variation-formula}
Suppose $\nabla(Q^p)\in L^1(\Omega)$ and $\uu$ is an absolute minimizer. Then
\[
\int_{|\uu|>0}\rdiv\left[\sum_{i=1}^m \left(|\nabla u^i|^{p}\Psi-p|\nabla u^i|^{p-2}(\nabla u^i\cdot\Psi)\nabla u^i\right)+Q^p\Psi\right]dx=0,
\]
for every $\Psi\in C_c^\infty(\Omega;\bR^n)$. 
\end{lemma}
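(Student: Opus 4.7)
I would prove this by the classical \emph{inner (domain) variation} technique. For $\Psi\in C_c^\infty(\Omega;\bR^n)$ and $|t|$ sufficiently small, the map $\Phi_t(x):=x+t\Psi(x)$ is a diffeomorphism of $\Omega$ coinciding with the identity outside a compact set. Setting $\uu_t:=\uu\circ\Phi_t^{-1}$ produces an admissible competitor: $\uu_t\equiv\g$ on $\partial\Omega$ and $u_t^i\ge0$, so $\uu_t\in\cK$. Absolute minimality gives $J(\uu_t)\ge J(\uu)$ for $t$ of either sign, hence the stationarity $\tfrac{d}{dt}\big|_{t=0}J(\uu_t)=0$.

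Changing variables $y=\Phi_t(x)$ in $J(\uu_t)$, using $\nabla_y u_t^i\big|_{y=\Phi_t(x)}=(D\Phi_t(x))^{-T}\nabla_x u^i(x)$, $\chi_{\{|\uu_t|>0\}}\circ\Phi_t=\chi_{\{|\uu|>0\}}$, and $dy=\det D\Phi_t\,dx$, yields
\begin{align*}
J(\uu_t)=\int_\Omega\!\left[\sum_{i=1}^m\bigl|(D\Phi_t)^{-T}\nabla u^i\bigr|^p + Q(\Phi_t(x))^p\chi_{\{|\uu|>0\}}\right]\det(D\Phi_t)\,dx.
\end{align*}
Expanding with $D\Phi_t=I+tD\Psi$, $\det D\Phi_t=1+t\,\rdiv\Psi+O(t^2)$, $(D\Phi_t)^{-T}=I-t(D\Psi)^T+O(t^2)$, and $Q^p\circ\Phi_t=Q^p+t\nabla(Q^p)\cdot\Psi+o(t)$ in $L^1$ (permitted by $\nabla(Q^p)\in L^1$), and exploiting the Lipschitz bound on $\uu$ (Theorem \ref{thm:Liptschitz-regularity}) for dominated convergence, setting the derivative to zero yields the \emph{interior variation identity}
\begin{align*}
0=\int_\Omega\left\{\sum_{i}\Bigl[|\nabla u^i|^p\rdiv\Psi - p|\nabla u^i|^{p-2}\partial_j u^i\partial_k u^i\partial_j\Psi^k\Bigr] + \chi_{\{|\uu|>0\}}\Bigl[Q^p\rdiv\Psi + \nabla(Q^p)\cdot\Psi\Bigr]\right\}dx.
\end{align*}
Since $\nabla u^i=0$ a.e.\ on $\{u^i=0\}$, the entire integrand is supported in $\{|\uu|>0\}$.

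To recognize this as the divergence stated in the lemma, I would verify the two identities (valid on the open set $\{u^i>0\}$, where $u^i$ is $p$-harmonic by Lemma \ref{rem:coopertive-property})
\begin{align*}
\rdiv(|\nabla u^i|^p\Psi) &= |\nabla u^i|^p\rdiv\Psi + \Psi\cdot\nabla|\nabla u^i|^p,\\
p\,\rdiv\bigl(|\nabla u^i|^{p-2}(\nabla u^i\cdot\Psi)\nabla u^i\bigr) &= \Psi\cdot\nabla|\nabla u^i|^p + p|\nabla u^i|^{p-2}\partial_j u^i\partial_k u^i\partial_j\Psi^k,
\end{align*}
where the second uses the product rule, $\Delta_p u^i=0$, and the algebraic identity $|\nabla u^i|^{p-2}\cdot\tfrac12\nabla|\nabla u^i|^2=\tfrac1p\nabla|\nabla u^i|^p$. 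Adding $\rdiv(Q^p\Psi)=Q^p\rdiv\Psi+\nabla(Q^p)\cdot\Psi$, subtracting, and integrating over $\{|\uu|>0\}$ reproduces exactly the interior variation integrand, and the claim follows.

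The main obstacle is that the second divergence identity above formally requires second derivatives of $u^i$, while $p$-harmonic functions are typically only $C^{1,\alpha}$. I would circumvent this by not writing the identity pointwise: instead, I would test the weak formulation of $\rdiv(|\nabla u^i|^{p-2}\nabla u^i)=0$ on $\{u^i>\varepsilon\}$ against a Lipschitz test function built from $\Psi$ and $u^i$ (for example, difference quotients of $u^i$ in direction $\Psi$, suitably cut off), producing the \emph{integrated} Pohozaev relation without differentiating $\nabla u^i$. Letting $\varepsilon\to0$ and using the uniform Lipschitz bound together with $\nabla u^i=0$ a.e.\ on $\{u^i=0\}$ absorbs all contributions from the set $\{u^i=\varepsilon\}$, and recovers the divergence identity on $\{|\uu|>0\}$.
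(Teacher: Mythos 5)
Your proposal is correct and follows essentially the same route as the paper: a domain variation $\Phi_t=\mathrm{id}+t\Psi$ (the paper composes with $\Phi_t$ rather than $\Phi_t^{-1}$, which is equivalent up to the sign of $t$), a change of variables, differentiation of $J$ at $t=0$, and identification of the first-variation integrand with the stated divergence using $\Delta_p u^i=0$ on $\{u^i>0\}$. Your closing remark about justifying the final divergence identity weakly (since $p$-harmonic functions are in general only $C^{1,\alpha}$) addresses a point the paper passes over silently, but it is a refinement of the same argument rather than a different approach.
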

\begin{proof}
Let us  define 
\[
\Phi_t(x)=x+t\Psi(x) \qquad \text{ and }\qquad \uu_t(x)=\uu(\Phi_t(x)).
\]
One can show that for sufficiently small $|t|$, $\Phi_t:\Omega\ra\Omega$ is a diffeomorphism.
We have $D\Phi_t=I+t D\Psi$ and for $i=1,\dots,m$
\[
\nabla u^i_t=D\Phi_t(x)\nabla u^i(\Phi_t(x)).
\]
It follows that
\[
|\nabla u_t^i(x)|^2=(\nabla u^i(\Phi_t(x)))^TA_t(x)\nabla u^i(\Phi_t(x)),
\]
where 
\[
A_t=(D\Phi_t)^TD\Phi_t=I + t((D\Psi)^T+D\Psi)+t^2(D\Psi)^TD\Psi.
\]
By a change of variables, we have
\[\begin{split}
J(\uu_t)=&\int_\Omega\sum_{i=1}^m|\nabla u^i_t(x)|^p+Q^p\chi_{\{|\uu_t|>0\}}\,dx \\
=&\int_\Omega\sum_{i=1}^m \left((\nabla u^i(\Phi_t(x)))^TA_t(x)\nabla u^i(\Phi_t(x))\right)^{p/2}+ Q^p(x)\chi_{\{|\uu|>0\}}(\Phi_t(x))\,dx \\
=& \int_\Omega\sum_{i=1}^m \left[ \left((\nabla u^i(y))^TA_t(\Phi_t^{-1}(y))\nabla u^i(y)\right)^{p/2}+ Q^p(\Phi_t^{-1}(y))\chi_{\{|\uu|>0\}}(y)\right]\left|\det D_y\Phi_t^{-1}(y)\right|\,dy\\
=&\int_{\{|\uu|>0\}}\sum_{i=1}^m \left[ \left((\nabla u^i(y))^TA_t(\Phi_t^{-1}(y))\nabla u^i(y)\right)^{p/2}+ Q^p(\Phi_t^{-1}(y))\right]\left|\det D_y\Phi_t^{-1}(y)\right|\,dy .
\end{split} \]
We also have 
\[
\frac d{dt}A_t(\Phi_t^{-1}(y))\Big|_{t=0}= D\Psi(y)+D\Psi(y)^T
\]
and 
\[
\frac d{dt}\left|\det D_y\Phi_t^{-1}(y)\right|\Big|_{t=0}= -\rdiv \Psi(y).
\]
Now differentiate $J(\uu_t)$ with respect to $t$ and note that its minimum is attained at $t=0$, then
\[\begin{split}
0=\frac d{dt}J(\uu_t)\Big|_{t=0}=&\int_{\{|\uu|>0\}}\sum_{i=1}^m p|\nabla u^i|^{p-2}(\nabla u^i)^TD\Psi\nabla u^i-\Psi\cdot\nabla(Q^p)dx\\
&-\int_{\{|\uu|>0\}}\left[\sum_{i=1}^m |\nabla u^i|^{p}+Q^p\right]\rdiv\Psi\,dx\\
=&-\int_{\{|\uu|>0\}}\rdiv\left[\sum_{i=1}^m \left(|\nabla u^i|^{p}\Psi-p|\nabla u^i|^{p-2}(\nabla u^i\cdot\Psi)\nabla u^i\right)+Q^p\Psi\right]\\
&\qquad\qquad+\sum_{i=1}^mp(\nabla u^i\cdot\Psi)\Delta_p u^i\, dx .
\end{split}\]
Since each $u^i$ is $p$-harmonic in $\{ u^i > 0\}$, see \eqref{p-laplacian-components}, we arrive at the desired claim, in the lemma.
\end{proof}

\begin{definition}
The upper $\cH^{n-1}$-density at any point $x_0\in\partial\{|\uu|>0\}$ is defined as
\[
\Theta^{*n-1}\left(\cH^{n-1}\big\lfloor\partial\{|\uu|>0\},x_0\right):=\limsup_{r\ra0}\frac{\cH^{n-1}(B_r(x_0)\cap \partial\{|\uu|>0\})}{\omega_{n-1}r^{n-1}},
\]
where $\omega_{n-1}$ denotes  the volume of the unit sphere in $\bR^{n-1}$.  
We already know  (see for example Theorem 2.7 in \cite{evans2018measure}) that for  $\cH^{n-1}$-a.e. point  $x_0\in\partial\{|\uu|>0\}$, their upper $\cH^{n-1}$-density satisfy
\[
\Theta^{*n-1}\left(\cH^{n-1}\big\lfloor\partial\{|\uu|>0\},x_0\right)\leq 1.
\] 
\end{definition}

\begin{theorem}\label{thm:identification-q}
Let $x_0\in \partial_{\rm red}\{|\uu|>0\}$ and suppose that
\[
\Theta^{*n-1}\left(\cH^{n-1}\big\lfloor\partial\{|\uu|>0\},x_0\right)\leq 1.
\]
Then ${\rm Tan}(\partial\{|\uu|>0\},x_0)=\{x:x\cdot\nu(x_0)=0\}$.
If, in addition, $x_0$ is a Lebesgue point for Radon measure $q^i\cH^{n-1}\big\lfloor\partial\{|\uu|>0\}$, that is
\begin{equation}\label{thm:identification-q-assump-lebeg-point}
\int_{B_r(x_0)\cap\partial\{|\uu|>0\}}|q^i-q^i(x_0)|\,d\cH^{n-1}=o(r^{n-1}), \quad\text{ as }r\ra0,
\end{equation}
then $q^i(x_0)=Q(x_0)$ and
\[
\uu(x_0+x)=\left(-x\cdot\nu(x_0)\right)_+\Aa_{x_0}+o(|x|),\quad\text{ as }x\ra0,
\]
for some vector $\Aa_{x_0}=(\alpha^1,\dots,\alpha^m)$ that 
\begin{equation}\label{thm:identification-q-norm-alpha}
|\Aa_{x_0}|_p^p=(\alpha^1)^p+\dots+(\alpha^m)^p=\frac1{p-1}Q(x_0)^p.
\end{equation}
\end{theorem}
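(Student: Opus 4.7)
The plan is a blow-up analysis at $x_0$. Let $r_k \downarrow 0$ and set $\uu_k(x) = \uu(x_0 + r_k x)/r_k$. Lemma~\ref{lem:blow-up} delivers a Lipschitz subsequential limit $\uu_0$ with $0 \in \partial\{|\uu_0|>0\}$, and Lemma~\ref{lem:blow-up-minimizer} upgrades it to an absolute minimizer of the frozen functional $J_0$ with constant weight $Q(x_0)$ on every ball. Since $x_0 \in \partial_{\rm red}\{|\uu|>0\}$, the very definition of $\nu(x_0)$ together with the $L^1_{\rm loc}$ convergence \eqref{blow-up-limit-rel4} forces $\{|\uu_0|>0\} = H := \{x\cdot\nu(x_0) < 0\}$ (up to null sets). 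Combining this with the Hausdorff convergence of the free boundaries \eqref{blow-up-limit-rel3} and the upper density bound, one obtains $\partial\{|\uu_0|>0\} = \partial H$, which establishes the tangent cone claim.

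Each component $u_0^i$ is nonnegative, Lipschitz, $p$-harmonic in $H$, and vanishes on $\partial H$. I would invoke the auxiliary Liouville-type lemma from Appendix~B to conclude that each such function is affine, $u_0^i(x) = \alpha^i(-x\cdot\nu(x_0))_+$ with $\alpha^i \geq 0$, so that $\uu_0(x) = (-x\cdot\nu(x_0))_+\Aa_{x_0}$. Because the blow-up limit is then the same along every sequence $r_k\downarrow 0$, the pointwise expansion $\uu(x_0+x) = (-x\cdot\nu(x_0))_+\Aa_{x_0} + o(|x|)$ in the statement follows from the uniform $C^{0,\alpha}$ convergence in Lemma~\ref{lem:blow-up}.

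To pin down $\Aa_{x_0}$, apply the domain-variation identity of Lemma~\ref{lem:domain-variation-formula} to $\uu_0$. Inside $H$ the relevant vector field is constant since $|\nabla u_0^i| = \alpha^i$, so applying the divergence theorem yields only boundary terms and the identity reduces to
\[
\bigl((1-p)|\Aa_{x_0}|_p^p + Q(x_0)^p\bigr)\int_{\partial H}\Psi\cdot\nu\,d\cH^{n-1} = 0
\]
for every $\Psi\in C_c^\infty$, which immediately gives $(p-1)|\Aa_{x_0}|_p^p = Q(x_0)^p$, i.e.\ \eqref{thm:identification-q-norm-alpha}. For the identification of $q^i(x_0)$, the Lebesgue point hypothesis \eqref{thm:identification-q-assump-lebeg-point} allows me to pass to the blow-up limit in the measure representation of Theorem~\ref{thm:Hausdorf-measure-fb} and compare with the direct computation $\Delta_p u_0^i = (\alpha^i)^{p-1}\cH^{n-1}\lfloor\partial H$ obtained from the explicit form of $u_0^i$.

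The main obstacle is the Liouville-type rigidity step: showing that a nonnegative Lipschitz $p$-harmonic function on a half-space with zero Dirichlet data must be affine in the normal direction. For $p=2$ this is a standard Phragmén–Lindelöf argument, but for general $p$ one needs the delicate asymptotic analysis deferred to Appendix~B. A secondary subtlety is to ensure $\Aa_{x_0}\neq 0$, so that $H$ is genuinely the positivity set of $\uu_0$; this is guaranteed by the nondegeneracy Lemma~\ref{lem:nondegeneracy} along the blow-up sequence, since $0\in\partial\{|\uu_0|>0\}$.
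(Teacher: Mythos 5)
Your overall skeleton --- blow-up at $x_0$, Lemma \ref{lem:blow-up-minimizer} to make the limit a minimizer of the frozen functional, identification of the positivity set with a half-space from the reduced-boundary hypothesis and \eqref{blow-up-limit-rel4}, the domain-variation identity for the amplitude, and the Lebesgue-point hypothesis for $q^i(x_0)$ --- is the same as the paper's. The genuine gap is the rigidity step. You invoke Lemma \ref{blow-up:linear-estimate-p-harmonic} of Appendix B to conclude that each component $u_0^i$ of a blow-up limit \emph{is} the function $\alpha^i(-x\cdot\nu(x_0))_+$ on the half-space; but that lemma only yields the infinitesimal expansion $u_0^i(x)=\alpha^i(-x\cdot\nu(x_0))_+ + o(|x|)$ as $x\to0$. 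This weaker statement does not suffice: the behaviour of $\uu$ near $x_0$ at scale $r_k$ is governed by the blow-up limit at unit scale, so to obtain the $o(|x|)$ expansion in the theorem you need every subsequential limit to coincide \emph{globally} (say on $B_1$) with a half-plane solution, and with the same vector $\Aa_{x_0}$ for all subsequences. The paper gets this global rigidity by a different mechanism: after rotating so that $\nu(x_0)=\ee^n$, the Lebesgue-point hypothesis \eqref{thm:identification-q-assump-lebeg-point}, together with \eqref{blow-up-limit-rel3} and the representation in Theorem \ref{thm:Hausdorf-measure-fb}, is used to pass to the limit in the measure equation and produce the constant boundary condition $|\nabla u_0^i|^{p-2}\partial_n u_0^i=-q^i(x_0)$ on $\{x_n=0\}$; then odd reflection across the hyperplane plus Liouville's theorem for entire $p$-harmonic functions with bounded gradient forces $u_0^i=(q^i(x_0))^{1/(p-1)}(-x_n)_+$ exactly. (In the paper, Appendix B is used only in Lemma \ref{lem:bndry-viscosity}, not here.)

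Relatedly, your assertion that ``the blow-up limit is then the same along every sequence'' is unsupported at the point where you make it: even granting that every limit is a half-plane solution, Lemma \ref{lem:domain-variation-formula} only constrains $|\Aa|_p$ through \eqref{thm:identification-q-norm-alpha}, so the amplitude vector could a priori vary with the subsequence (mass shifting between components). Uniqueness of the blow-up comes precisely from the identification $\alpha^i=(q^i(x_0))^{1/(p-1)}$ for each $i$, i.e.\ from the Lebesgue-point hypothesis and the measure convergence, which in your write-up is invoked only afterwards; the argument should be run for every subsequential limit before claiming the pointwise expansion. Your derivation of \eqref{thm:identification-q-norm-alpha} from Lemma \ref{lem:domain-variation-formula} coincides with the paper's computation and is fine once the exact half-plane form of $\uu_0$ is in hand.
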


\begin{proof}
Without loss of generality assume that $\nu(x_0)=\ee^n$. 
Let $\uu_k$ be a blow-up sequence with respect to balls $B_{r_k}(x_0)$, with blow-up limit $\uu_0$. 
Since $\nu(x_0)$ is the normal vector to $\partial\{|\uu|>0\}$ at $x_0$,
\[
\int_{B_r(x_0)}\left|\chi_{\{|\uu|>0|\}}-\chi_{\{x:(x-x_0)\cdot\nu(x_0)<0\}}\right|\,dx=o(r^n),\quad\text{ as }r\ra0.
\]
This along with \eqref{blow-up-limit-rel4} implies $\chi_{\{|\uu_0|>0\}}=\chi_{\{x_n<0\}}$ almost every where in $\bR^n$. 
By Lemma \ref{lem:blow-up-minimizer} we know that $\uu_0$ is an absolute minimizer of $J_0$ and so continuous.
Then $\{|\uu_0|>0\}=\{x_n<0\}$. This proves that $\{x_n=0\}$ is the topological tangent plane to $\partial\{|\uu|>0\}$ at $x_0$.
Now let
\[
\phi(x)=\min\left(1, \max(0,2-|x_n|)\right)\eta(x'),
\]
where $0\leq\eta\in C_0^\infty(B'_R)$ and $B'_R$ is $(n-1)$-dimensional ball with radius $R$ ($R$ is arbitrary and fixed). Denote $\phi_k(x):=r_k\phi(\frac{x-x_0}{r_k})$ and write
\begin{align*}
&-\int_{\bR^n}|\nabla u_0^i|^{p-2}\nabla u_0^i\cdot\nabla\phi\,dx = 
-\lim_{k\ra\infty}\int_{\bR^n}|\nabla u_k^i|^{p-2}\nabla u_k^i\cdot\nabla\phi \,dx 
\\ &= -\lim_{k\ra\infty}r_k^{-n}\int_{\bR^n}|\nabla u^i|^{p-2}\nabla u^i\cdot\nabla\phi_k \,dx 
= \lim_{k\ra\infty}r_k^{-n}\int_{\bR^n}\Delta_p u^i\phi_k \,dx \\
& = \lim_{k\ra\infty}r_k^{-n}\int_{\bR^n}q^i\phi_k\,d\cH^{n-1}\lfloor\,\partial\{|\uu|>0\}
= \lim_{k\ra\infty}\int_{\bR^n}q^i(x_0+r_kx)\phi(x)\chi_{\partial\{|\uu_k|>0\}}\,d\cH^{n-1} 
\\ &=\lim_{k\ra\infty}\int_{\bR^n}q^i(x_0)\phi(x)\chi_{\partial\{|\uu_k|>0\}}\,d\cH^{n-1}
= q^i(x_0)\int_{\{x_n=0\}}\eta(x')\,dx',
\end{align*}
where we have used  assumption \eqref{thm:identification-q-assump-lebeg-point} and property \eqref{blow-up-limit-rel3}.
Therefore, for any test function $\zeta\in C_0^\infty(B_R)$ we have
\[
-\int_{B_R\cap\{x_n<0\}}|\nabla u_0^i|^{p-2}\nabla u_0^i\cdot\nabla\zeta\,dx = q^i(x_0)\int_{B_R'}\zeta(x',0)\,dx'.
\]
Since $\Delta_p u^i_0=0$ in $\{x_n<0\}$, from boundary regularity it follows that 
\[
|\nabla u_0^i|^{p-2}\partial_n u_0^i=-q^i(x_0)\quad\text{ on }\{x_n=0\},
\]
in the classical sense. 
We need to show that 
\begin{equation}\label{thm:identification-q-rel1}
u_0^i(x)=\alpha^i(-x_n)_+, \quad \text{ where }\alpha^i:=\left(q^i(x_0)\right)^{1/(p-1)}.
\end{equation}
To see this, define  $w_0$  by
\[
w_0(x):=\left\{\begin{array}{ll}
u_0^i(x),&\, \text{ in }x_n\le 0,\\[8pt]
-u_0^i(x',-x_n),&\,   \text{ in }x_n>0.
\end{array}\right.
\]
It is obvious that $w_0$ is $p$-harmonic in whole $\bR^n$ as well as 
$$
\norm{\nabla w_0}_{L^\infty(\bR^n)} = \norm{\nabla u_0^i}_{L^\infty(\bR^n)}\leq \norm{\nabla u^i}_{L^\infty(B_r(x_0))},\quad\text{ for any }r>0.
$$
By Liouville's  theorem  
we conclude  that $w_0$ is a linear function. 
The boundary value on $x_n=0$, ($u_0^i=0$ and $\partial_n u_0^i=-\alpha^i$) shows that $w_0(x)=-\alpha^i x_n$.
This proves \eqref{thm:identification-q-rel1} and shows that
\[
u^i(x_0+x)=\alpha^i(-x_n)_++o(|x|),\quad\text{ as }x\ra0.
\]
We just have to show \eqref{thm:identification-q-norm-alpha}. To do this, note that $\uu_0$ is an absolute minimizer of $J_0$. 
Apply Lemma \ref{lem:domain-variation-formula} for $\uu_0$ and some $\Psi\in C_c^\infty(\bR^n)$
\[\begin{split}
0=&\int_{\{x_n<0\}}\rdiv\left[\sum_{i=1}^m \left(|\nabla u_0^i|^{p}\Psi-p|\nabla u_0^i|^{p-2}(\nabla u_0^i\cdot\Psi)\nabla u_0^i\right)+Q(x_0)^p\Psi\right]dx\\
=&\int_{\{x_n=0\}}\sum_{i=1}^m \left(|\nabla u_0^i|^{p}(\Psi\cdot\ee^n)-p|\nabla u_0^i|^{p-2}(\nabla u_0^i\cdot\Psi)\partial_n u_0^i\right)+Q(x_0)^p(\Psi\cdot\ee^n)\,d\cH^{n-1}\\
=&\int_{\{x_n=0\}}\left(\sum_{i=1}^m(1-p)(\alpha^i)^p+Q(x_0)^p\right)(\Psi\cdot\ee^n)\,d\cH^{n-1}.
\end{split}\]
Thus \eqref{thm:identification-q-norm-alpha} will be obtained.
\end{proof}


\medskip

\section{Regularity of free boundary}\label{sec:FB}

\begin{definition}
Let $\uu\in C(\Omega,\bR^m)$. We say that the boundary condition $\nabla|\uu|_p=g$ on $\partial\{|\uu|>0\}$ holds in viscosity sense, if 
\begin{enumerate}[$\circ$]
\item 
For every differentiable function $\phi:\bR^n\ra\bR$ that touches $|\uu|_p$ from below in some $x_0\in\partial\{|\uu|>0\}$, that is
\[
|\uu(x_0)|_p=\phi(x_0),\quad\text{ and } \quad |\uu|_p\ge \phi \ \text{ in } \{|\uu|>0\}\cap B_r(x_0)
\]
 for some $r>0$,  we have $|\nabla\phi(x_0)| \leq g(x_0)$.
 
 \item For every differentiable function $\phi:\bR^n\ra\bR$ that touches $|\uu|_p$ from above in some $x_0\in\partial\{|\uu|>0\}$, that is
\[
|\uu(x_0)|_p=\phi(x_0),\quad\text{ and } \quad |\uu|_p\le \phi \ \text{ in } \{|\uu|>0\}\cap B_r(x_0)
\]
 for some $r>0$,  we have $|\nabla\phi(x_0)| \geq g(x_0)$.
\end{enumerate} 
\end{definition}

\begin{lemma}\label{lem:bndry-viscosity}
Let $\uu$ be a (local) minimizer, 
then  the boundary condition
\[
\nabla|\uu|_p=\frac1{(p-1)^{1/p}}Q, \quad \text{ on }\partial\{|\uu|>0\}.
\]
holds in the viscosity sense.
\end{lemma}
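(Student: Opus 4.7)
The plan is a blow-up argument at the touching point, followed by a classification of blow-up limits. Consider first the case when $\phi$ touches $|\uu|_p$ from below at $x_0\in\partial\{|\uu|>0\}$; set $\xi:=\nabla\phi(x_0)$. We may assume $\xi\ne 0$, since otherwise the required inequality $|\xi|\le Q(x_0)/(p-1)^{1/p}$ is vacuous. Form the blow-up sequence $\uu_k(x):=\uu(x_0+r_k x)/r_k$ with $r_k\to 0$, and extract, along a subsequence, a limit $\uu_0$ as in Lemma~\ref{lem:blow-up}; by Lemma~\ref{lem:blow-up-minimizer}, $\uu_0$ is an absolute minimizer of
\begin{equation*}
J_0(\vv):=\int \sum_{i=1}^m|\nabla v^i|^p + Q(x_0)^p\chi_{\{|\vv|>0\}}\,dx
\end{equation*}
on every ball, with $0\in\partial\{|\uu_0|>0\}$. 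The touching condition scales as $r_k^{-1}[\phi(x_0+r_kx)-\phi(x_0)]\le|\uu_k(x)|_p$ on $\{|\uu_k|>0\}$, and by the uniform convergence together with the Hausdorff convergence of the free boundaries in \eqref{blow-up-limit-rel3}, this passes to $\xi\cdot x\le|\uu_0(x)|_p$ on $\{|\uu_0|>0\}$; since $|\uu_0|_p\ge 0$ everywhere we obtain the global bound
\begin{equation*}
(\xi\cdot x)_+ \le |\uu_0(x)|_p \quad\text{for all }x\in\bR^n,
\end{equation*}
so in particular $H:=\{\xi\cdot x>0\}\subset\{|\uu_0|>0\}$.

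The core step is to promote this one-sided bound into the equality $|\xi|^p=Q(x_0)^p/(p-1)$. For this we take a further blow-down $\uu_0^R(x):=\uu_0(Rx)/R$ and let $R\to\infty$; by the same lemmas, any subsequential limit $\uu_\infty$ is an absolute minimizer of $J_0$ in every ball, still satisfies $(\xi\cdot x)_+\le|\uu_\infty|_p$, and is $1$-homogeneous via a Weiss-type monotonicity formula obtained by applying Lemma~\ref{lem:domain-variation-formula} to the radial field $\Psi(x)=\eta(|x|)x$. On $H$ each component $u_\infty^i$ is nonnegative, $p$-harmonic, $1$-homogeneous and vanishes on $\partial H$; the classification of such functions forces $u_\infty^i(x)=\alpha^i(x\cdot\nu)_+$ with $\nu:=\xi/|\xi|$. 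Writing $\Aa:=(\alpha^1,\dots,\alpha^m)$, the lower bound yields $|\Aa|_p\ge|\xi|$. Inserting the half-plane profile $\uu_\infty(x)=\Aa(x\cdot\nu)_+$ into the domain variation identity of Lemma~\ref{lem:domain-variation-formula} (repeating the computation at the end of the proof of Theorem~\ref{thm:identification-q}) produces the balance $(p-1)|\Aa|_p^p=Q(x_0)^p$, whence $|\xi|\le|\Aa|_p=Q(x_0)/(p-1)^{1/p}$.

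The upper-touching case is dual: the inequality $|\uu|_p\le\phi$ passes to $|\uu_0|_p\le\xi\cdot x$ on $\{|\uu_0|>0\}$, so the positivity set of $\uu_0$ is contained in $\overline H$, and nondegeneracy (Lemma~\ref{lem:nondegeneracy}) guarantees it does not collapse around $0$; the same blow-down/classification step then yields $\uu_\infty(x)=\Aa(x\cdot\nu)_+$ with $|\Aa|_p\le|\xi|$, and the same balance gives the reverse inequality $|\xi|\ge Q(x_0)/(p-1)^{1/p}$. The main obstacle is the classification step for $\uu_\infty$, which rests on two ingredients: \emph{(i)} the $1$-homogeneity of the blow-down, obtained from a Weiss-type monotonicity formula derived from Lemma~\ref{lem:domain-variation-formula} with a careful analysis of the radial boundary terms, and \emph{(ii)} the rigidity statement that any nonnegative $1$-homogeneous $p$-harmonic function on $H$ vanishing on $\partial H$ must be linear, which is a standard Phragmén--Lindelöf / separation-of-variables argument combined with $p$-Laplace boundary regularity. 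Once these are in place, the free boundary balance coming from Lemma~\ref{lem:domain-variation-formula} yields both inequalities and hence the viscosity boundary condition.
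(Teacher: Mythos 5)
Your overall skeleton (blow-up at the touching point, reduction to a global half-plane profile, and reading off the slope from the domain-variation identity of Lemma~\ref{lem:domain-variation-formula}) is sound, and the final algebra $(p-1)|\Aa|_p^p=Q(x_0)^p$ combined with $|\Aa|_p\gtrless|\xi|$ is exactly how the two viscosity inequalities are closed. However, the route you choose for the classification step --- blow-down of $\uu_0$, $1$-homogeneity via a Weiss-type monotonicity formula, then rigidity of homogeneous $p$-harmonic functions --- has two genuine gaps. First, no Weiss formula is available here: it is not a formal consequence of Lemma~\ref{lem:domain-variation-formula} with $\Psi=\eta(|x|)x$; one must also test the equation with $u^i$ itself, and for $p\neq2$ the resulting ``defect'' term carries the degenerate weight $|\nabla u^i|^{p-2}$, so both the monotonicity and the rigidity statement (constant Weiss energy implies $1$-homogeneity) require a separate, nontrivial proof that appears nowhere in the paper --- which is structured precisely to avoid monotonicity formulas for $p\neq 2$ (cf.\ the discussion preceding Theorem~\ref{thm:reg-free-boundary-2}). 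Second, even granting homogeneity, the hypotheses of your classification are not verified: in the lower-touching case you only know $H=\{\xi\cdot x>0\}\subset\{|\uu_\infty|>0\}$, so the components need not vanish on $\partial H$ (the positivity cone could be strictly larger than $H$); in the upper-touching case you only know $\{|\uu_\infty|>0\}\subset H$, so the components need not be $p$-harmonic on all of $H$. In neither case does ``nonnegative, $p$-harmonic, $1$-homogeneous on $H$, zero on $\partial H$'' hold as stated, and without pinning the positivity cone to be exactly a half-space you obtain neither $\uu_\infty=\Aa(x\cdot\nu)_+$ nor the balance from Lemma~\ref{lem:domain-variation-formula}, which requires the global half-plane structure ($\uu_\infty\equiv 0$ outside $H$). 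Pinning the cone down needs an exponent-comparison/rigidity result for homogeneous $p$-harmonic functions on cones; for $p\neq2$ this is not the ``standard separation of variables'' you invoke, since the operator is nonlinear.

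The paper circumvents all of this with the Appendix~B lemma: in the lower-touching case every nontrivial component of $\uu_0$ is positive and $p$-harmonic on the half-space $H$ (Lemma~\ref{rem:coopertive-property}), so Lemma~\ref{blow-up:linear-estimate-p-harmonic}(ii) (a barrier/Hopf argument, no homogeneity needed) gives directly the linear asymptotics $u_0^i(x)=\alpha^i(\xi\cdot x)+o(|x|)$ at the origin; in the upper-touching case the components vanish on $\partial H$ and case (i) of that lemma applies. A second blow-up then produces the exact linear profile, to which Lemma~\ref{lem:domain-variation-formula} is applied as at the end of the proof of Theorem~\ref{thm:identification-q}. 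If you replace your blow-down/Weiss step by Lemma~\ref{blow-up:linear-estimate-p-harmonic}, the remainder of your argument (the inequalities $|\Aa|_p\ge|\xi|$, resp.\ $|\Aa|_p\le|\xi|$, and the balance) goes through as you wrote it.
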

\begin{proof}
We show that the boundary condition holds on every point $x_0\in\partial\{|\uu|>0\}$.  
Suppose $\phi$ touches $|\uu|_p$ from below at $x_0$. 
Consider the blow-up sequences
\[
\uu_k(x)=\frac{\uu(x_0+r_kx)}{r_k}\quad  \text{ and }\quad  \phi_k(x)=\frac{\phi(x_0+r_kx)}{r_k},
\] 
where  $r_k\searrow  0$. Observe that
\[
\phi_k(0)=|\uu_k(0)|_p=0,\quad \phi_k(x)\le |\uu_k(x)|_p, 
\]
and up to a subsequence we have 
\begin{equation}\label{lem:bndry-viscosity-rel1}
\nabla\phi(x_0)\cdot x=\lim_{k\ra\infty}\phi_k(x)\le \lim_{k\ra\infty}|\uu_k(x)|_p=|\uu_0(x)|_p\qquad\text{ in }\bR^n. 
\end{equation}
If $\nabla \phi(x_0)=0$, the  viscosity  condition holds trivially. 
Otherwise, the non-coincidence set $ \{|\uu_0|>0\} $ contains half-space $\{x:\nabla\phi(x_0)\cdot x>0\}$. 
On the other hand, $\uu_0$ is minimizer of $J_0$ (Lemma \ref{lem:blow-up-minimizer}) and by Lemma \ref{rem:coopertive-property}
every nontrivial component of $\uu_0$, say $u^i_0$,  is positive in $\{x:\nabla\phi(x_0)\cdot x>0\}$. 
According to  Lemma \ref{blow-up:linear-estimate-p-harmonic}, $u^i_0(x)=\alpha^i (\nabla \phi(x_0)\cdot x)+o(|x|)$ for some $\alpha^i$.
Thus any blowup of $\uu_0$ at $x=0$ must be of the form $\uu_{00}(x)=\Aa_0 (\nabla \phi(x_0)\cdot x)$ where $\Aa_0=(\alpha^1,\cdots,\alpha^m)$. 
Again apply Lemma \ref{lem:blow-up-minimizer} along with Lemma \ref{lem:domain-variation-formula}, we get
\[|\Aa_{0}|_p|\nabla\phi(x_0)|=\frac1{(p-1)^{1/p}}Q(x_0).\]
Thus \eqref{lem:bndry-viscosity-rel1} yields that 
\begin{align*}
\nabla\phi(x_0)\cdot x \le |\uu_0(x)|_p = |\Aa_0|_p|\nabla\phi(x_0)\cdot x| + o(|x|),
\end{align*}
and so
$$|\nabla\phi(x_0)|\leq   \frac1{(p-1)^{1/p}}Q(x_0).$$  
The same argument holds when $\phi$ touches $|\uu_0|_p$ from above.
\end{proof}

\begin{definition}\label{def:NTA}
A domain $\Omega\subset\bR^n$ is called non-tangentially accessible domain (NTA) with parameters $M\geq 1$ and $R_0>0$ if
\begin{enumerate}[(i)]
\item
$\Omega$ satisfies the corkscrew condition, that is, for any $x\in\partial\Omega$ and $r\in(0,R_0)$ there exists $a_r(x)\in \Omega\cap B_{r}(x)$ such that $M^{-1}r<\dist(a_r(x),\partial\Omega)$.

\item
$\bR^n\setminus\Omega$ satisfies the corkscrew condition. 

\item
If $x\in\partial\Omega$ and $x_1, x_2\in B_r(x)\cap\Omega$ for $0<r<R_0$, then there exists a rectifiable curve $\gamma:[0,1]\ra\Omega$ with $\gamma(0)=x_1$ and $\gamma(1)=x_2$ such that $\cH^1(\gamma)\le M|x_1-x_2|$ and
\[
\min\left\{\cH^1(\gamma([0,t])), \cH^1(\gamma([t,1]))
\right\}\le M\dist(\gamma(t),\partial\Omega),\quad\text{ for every }t\in[0,1],
\]
where $\cH^1$ denotes length or the one-dimensional Hausdorff measure. 
\end{enumerate}
\end{definition}

\begin{definition}
We say that $x_0\in \partial\{|\uu|>0\}$ is a {\it regular  point} of the  free boundary if for every $\epsilon>0$ there are $r<1$ and a vector $\Aa \in \bR^m$ and unit vector $\nu\in \bR^n$ such that
\[
\norm{\uu_{r,x_0}-(x\cdot \nu)_+\Aa}_{L^\infty(B_1)} \le \epsilon.
\]
 We denote the set of all regular points by $\cR_{\uu} $.
Theorem \ref{thm:identification-q} proves that $\cH^{n-1}(\partial\{|\uu|>0\}\setminus \cR_{\uu})=0$.
\end{definition}

\begin{theorem}\label{thm:reg-free-boundary}
Let $\uu=(u^1,\dots,u^m)$ be a (local) minimizer and  $x_0\in\cR_{\uu} $.
Furthermore, assume that $B_{r_0}(x_0)\cap\{|\uu|>0\}$ is NTA domain.
Then $\cR_{\uu} \cap B_r(x_0) $,  for some $0<r\leq r_0$, is $C^{1,\alpha}$ for a universal exponent $0<\alpha<1$.
\end{theorem}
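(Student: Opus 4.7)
The plan is to reduce the vectorial problem to a scalar $p$-Laplacian Bernoulli problem near $x_0$ by exploiting the boundary Harnack principle for $p$-harmonic functions in NTA domains, and then invoke the scalar regularity theory. First, by Theorem \ref{thm:identification-q} and the assumption $x_0 \in \cR_{\uu}$, every blow-up limit has the form $(-x\cdot\nu(x_0))_+ \Aa_{x_0}$ with $|\Aa_{x_0}|_p^p = Q(x_0)^p/(p-1)$. Let $I \subset \{1,\dots,m\}$ be the set of indices with $\alpha^i > 0$. Using Lemma \ref{rem:coopertive-property} (each component is either identically zero or strictly positive on a connected component of $\{|\uu|>0\}$) together with Harnack's inequality for $p$-harmonic functions, after possibly shrinking $r_0$ one may assume $u^i > 0$ throughout $\Omega_0 := B_{r_0}(x_0) \cap \{|\uu|>0\}$ for every $i \in I$, while $u^i \equiv 0$ in $\Omega_0$ for $i \notin I$.

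Next, since $\Omega_0$ is NTA and each $u^i$, $i \in I$, is a nonnegative $p$-harmonic function in $\Omega_0$ vanishing continuously on $\partial\Omega_0 \cap B_{r_0}(x_0)$, the boundary Harnack principle for $p$-harmonic functions in NTA domains (Lewis--Nystr\"om) guarantees that the ratios $u^i/u^1$ extend as H\"older continuous positive functions $c^i(x)$ up to the free boundary, with $c^i(x_0) = \alpha^i/\alpha^1$. Writing $\uu(x) = u^1(x)\,\cc(x)$ with $\cc = (c^1,\dots,c^m) \in C^{0,\beta}(\overline{\Omega_0})$ reduces the vectorial structure to a single scalar unknown.

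Combining this reduction with the viscosity boundary condition of Lemma \ref{lem:bndry-viscosity}, the scalar function $v := u^1$ is $p$-harmonic in $\{v>0\}$ and satisfies, in the viscosity sense,
\[
|\nabla v| = \tilde Q(x) \quad \text{on } \partial\{v>0\} \cap B_{r_0}(x_0),
\]
where $\tilde Q(x) := Q(x)/\bigl((p-1)^{1/p}\,|\cc(x)|_p\bigr)$ is H\"older continuous and bounded between two positive constants. Since $x_0 \in \cR_{\uu}$, the scalar $v$ inherits flatness at $x_0$ at some positive scale. One then invokes the improvement-of-flatness theorem for the scalar one-phase $p$-Laplacian Bernoulli problem with H\"older weight (Danielli--Petrosyan; De~Silva and coauthors) to conclude that $\partial\{v>0\} \cap B_r(x_0)$ is a $C^{1,\alpha}$ hypersurface for some $0 < \alpha < 1$ and $0 < r \le r_0$. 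Since this set coincides with $\partial\{|\uu|>0\} \cap B_r(x_0)$ and every such point is automatically regular, the theorem follows.

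The principal obstacle is the boundary Harnack step: promoting the ordinary Harnack inequality to a H\"older estimate for the quotient $u^i/u^1$ up to the free boundary requires the full $p$-harmonic boundary Harnack theory in NTA domains, which is precisely why the NTA hypothesis appears in the statement. A secondary technical point is checking that the viscosity free boundary condition for $\uu$ provided by Lemma \ref{lem:bndry-viscosity} descends, through the decomposition $\uu = v\cc$, to the scalar viscosity condition $|\nabla v| = \tilde Q$ with $\tilde Q$ regular enough to feed into the scalar flatness-implies-$C^{1,\alpha}$ machinery.
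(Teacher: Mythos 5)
Your proposal is correct and follows essentially the same route as the paper: use the boundary Harnack principle in the NTA domain to show the ratios $u^i/u^1$ are H\"older up to the free boundary, descend the viscosity condition of Lemma \ref{lem:bndry-viscosity} to the scalar condition $|\nabla u^1|=gQ/(p-1)^{1/p}$ with $g=1/|\cc|_p$ H\"older, and then quote the scalar one-phase flatness/regularity theory (the paper cites Ferrari--Lederman rather than Danielli--Petrosyan/De Silva, but this is immaterial). The only step you leave as a remark --- transferring the viscosity condition from $|\uu|_p$ to $u^1$ --- is carried out in the paper by perturbing the test function to $\psi(x)=\phi(x)\bigl(1/g(y)-C|x-y|^\mu\bigr)$, exactly the kind of correction your decomposition $\uu=u^1\cc$ requires.
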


\begin{proof}
We may assume that $u^1>0$ in $B_{r_0}(x_0)\cap\{|\uu|>0\}$. 
First we show that there is a H\"older function $g:B_r(x_0)\cap\partial\{|\uu|>0\}\ra[c,1]$ for some $0<r\leq r_0$ and $0<c\leq 1$ such that $u^1$ is a viscosity solution to the problem
\begin{equation}\label{scalar-viscosity-problem}
\begin{split}
\Delta_p u^1=0, &\quad\text{ in }\{u^1>0\}\cap B_r, \\
 |\nabla u^1|=\frac{gQ}{(p-1)^{1/p}}, &\quad\text{ on }\partial\{u^1>0\}\cap B_r.
\end{split}
 \end{equation}

Since $B_{r_0}(x_0)\cap\{|\uu|>0\}$ is NTA domain, 
the boundary Harnack  inequality (see \cite{lewis2008boundary}) implies that $g_i:=u^i/u^1$ is H\"older continuous in $\overline{\{|\uu|>0\}}\cap B_r$ for some $r\le r_0$.
Define
\[
g:=(1+g_2^p+\dots +g_m^p)^{-1/p},
\]
and observe that $u^1= g|\uu|_p$.
Suppose now the test function $\phi$ is touching $u^1$ from below in a point $y\in \partial\{|\uu|>0\}$. 
For $\rho$ small enough, choose a constant $C>0$ such that
\[
\frac1{g(x)}\ge \frac1{g(y)}-C|x-y|^\mu\ge0, \ \ \text{ for every }x\in \overline{\{|\uu|>0\}}\cap B_\rho,
\]
where $\mu$ is H\"older exponent of $g$.
Set $\psi(x)=\phi(x)(1/g(y)-C|x-y|^\mu)$, we get $\psi(y)=|\uu(y)|_p=0$ (note that since $\phi(y)=0$, so $\psi$ is differentiable at $y$) and 
\[
\psi(x)\le u^1(x)(\frac1{g(y)}-C|x-y|^\mu)=g(x)(\frac1{g(y)}-C|x-y|^\mu)|\uu(x)|_p\le |\uu(x)|_p.
\]
Therefore, $\psi$ touches $|\uu|_p$ at $y$ from below. By Lemma \ref{lem:bndry-viscosity}
\[
|\nabla \psi(y)|\le \frac{Q(y)}{(p-1)^{1/p}}.
\]
Note that $\nabla\psi(y)=\frac1{g(y)}\nabla \phi(y)$ to see the boundary condition \eqref{scalar-viscosity-problem} in viscosity sense.

The regularity of free boundary follows by the known results on the regularity of the one-phase scalar problem \eqref{scalar-viscosity-problem}; see \cite{ferrari2021regularity}.
\end{proof}

Our next result improves the theorem above, in the sense that we can remove the NTA conditions, for $p$ close to 2. The main reason for not being able to handle the NTA, is the lack of ACF-monotonicity formula, use in classical paper; see  proof of Proposition \ref{connected-coincidence-set}, to prove the connectivity argument for the NTA.

\begin{theorem}\label{thm:reg-free-boundary-2}
Let $\uu=(u^1,\dots,u^m)$ be a (local) minimizer. Then there is $\epsilon_0 > 0$, such that for any $p \in  (2-\epsilon_0, 2 + \epsilon_0)$ we have 
\begin{itemize}
\item[i)]    The regular set  $\cR_{\uu}  $, is locally   $C^{1,\alpha}$.
\item[ii)] 
In dimensions $2,3,4$ the free boundary is $C^{1,\alpha}$.

\end{itemize}
Here  $0 <\alpha  <1$ is  is  a universal exponent.
\end{theorem}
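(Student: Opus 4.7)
The plan is to reduce Theorem \ref{thm:reg-free-boundary-2} to Theorem \ref{thm:reg-free-boundary} by establishing the NTA property of the positivity set near every regular free boundary point for $p$ sufficiently close to $2$, and then showing that in low dimensions the entire free boundary is regular. The motivation is that at a regular point $x_0\in\cR_\uu$, Theorem \ref{thm:identification-q} guarantees that blow-ups of $\uu$ are half-plane solutions $(x\cdot\nu)_+\Aa$, whose positivity set is literally a half-space and hence trivially NTA.

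\textbf{Part (i).} I would prove the following quantitative statement by contradiction and compactness: there exist $\epsilon_0,r_0>0$ and $M\geq 1$ such that, for every $p\in(2-\epsilon_0,2+\epsilon_0)$, every local minimizer $\uu$, and every $x_0\in\cR_\uu$, the set $B_{r_0}(x_0)\cap\{|\uu|>0\}$ is NTA in the sense of Definition \ref{def:NTA} with parameters $(M,r_0)$. If this failed, we would obtain a sequence $(p_k,\uu_k,x_k)$ with $p_k\to 2$ and $x_k\in\cR_{\uu_k}$ for which the NTA property deteriorates. The uniform (in $p$, for $p$ in any compact subinterval of $(1,\infty)$) Lipschitz and nondegeneracy estimates from Theorem \ref{thm:Liptschitz-regularity} and Lemma \ref{lem:nondegeneracy} let us pass to a blow-up limit via Lemmas \ref{lem:blow-up} and \ref{lem:blow-up-minimizer}; this limit minimizes the $p=2$ functional and, by regularity of $x_k$, is a half-plane solution, hence NTA with universal constants. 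This contradicts the failure of NTA in the prelimit, and Theorem \ref{thm:reg-free-boundary} then delivers $C^{1,\alpha}$-regularity of $\cR_\uu$.

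\textbf{Part (ii).} It suffices to show $\partial\{|\uu|>0\}=\cR_\uu$ when $n\in\{2,3,4\}$. If a singular free boundary point existed, a Federer-style dimension reduction applied to blow-ups (which are absolute minimizers of $J_0$ by Lemma \ref{lem:blow-up-minimizer}) would produce a non-planar $1$-homogeneous global minimizer. Reducing to the scalar case as in \cite{caffarelli2018minimization} -- each component is $p$-harmonic on its support by Lemma \ref{rem:coopertive-property}, and a harmonic-replacement competitor forces scalar minimality -- and invoking the classification of singular one-phase minimizers for $p=2$ due to Caffarelli-Jerison-Kenig and Jerison-Savin (which excludes non-planar cones in dimensions $\leq 4$), the same compactness-in-$p$ argument from Part (i) rules out such cones for $p$ near $2$.

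\textbf{Main obstacle.} The central difficulty, as the authors note in the remark preceding the theorem, is the absence of a $p$-Laplacian analog of the Alt-Caffarelli-Friedman monotonicity formula, which at $p=2$ is precisely what yields NTA (via connectivity of $\{|\uu|>0\}$) and controls the singular-set dimension. My proposal replaces ACF by a compactness/perturbation argument, only viable for $p$ in a small neighborhood of $2$. The delicate point is verifying that both the regularity of the base points and the half-plane form of the blow-up limit pass to the limit as $p_k\to 2$, uniformly; this requires quantitative control of the geometric convergence \eqref{blow-up-limit-rel3} and of the vectorial structure of $\Aa_{x_0}$ from Theorem \ref{thm:identification-q} along the sequence, which is where most of the technical work would lie.
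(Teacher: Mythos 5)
Your overall strategy (reduce to Theorem \ref{thm:reg-free-boundary} by proving NTA for $p$ close to $2$, then upgrade to the full free boundary in low dimensions) is the same as the paper's, but the step that actually carries the load is missing, and your substitute for it does not work as stated. In the paper, part (i) is obtained from Proposition \ref{lem:NTA-domain}: the two corkscrew conditions are verified directly for \emph{all} $p$ and all free boundary points from nondegeneracy (Lemma \ref{lem:nondegeneracy}) and positive density (Theorem \ref{thm:porosity}), and the only place where $p\approx 2$ enters is the Harnack chain condition, which is reduced via blow-up and blow-down (Lemmas \ref{lem:nondegeneracy-for-components} and \ref{lem:uniform-connectivity}) to the statement that no global minimizer of $J_0$ has a disconnected positivity set at all scales (Proposition \ref{connected-coincidence-set}); that connectivity statement is proved at $p=2$ with the ACF-type monotonicity formula of Aguilera--Caffarelli--Spruck \cite{aguilera1987optimization} and transferred to $p$ near $2$ by compactness of minimizers in $p$. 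Your compactness argument ``NTA fails along $(p_k,\uu_k,x_k)$, blow up, the limit is a half-plane, hence NTA, contradiction'' does not replace this: (a) failure of NTA is a multi-scale statement (corkscrew at every $r<R_0$, Harnack chains for every admissible pair of points), so it does not select a single scale at which to blow up; (b) the definition of regular point is purely qualitative, so the flatness scale at $x_k$ may degenerate with $k$ and the limit taken at the NTA-failure scale need not be a half-plane solution; and (c) even when the limit is a half-plane, Hausdorff closeness of $\{|\uu_k|>0\}$ to a half-space at one scale does not yield the Harnack chain condition at that scale and below --- for that one needs flatness at all smaller scales (Reifenberg flatness), which is essentially the regularity one is trying to prove, so the argument is circular. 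You correctly identify in your last paragraph that ACF is what yields connectivity at $p=2$, but the proposal never supplies the connectivity-of-global-minimizers step that the paper makes the pivot of the proof.

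For part (ii) the paper does not perform dimension reduction: it uses that for $p=2$ the free boundary is $C^{1,\alpha}$ in dimensions $2,3,4$ (\cite{alt1981existence}, \cite{CJK}, \cite{JS}) and that for $p\approx2$ the $p$-free boundary is Hausdorff-close to the $2$-free boundary in $B_r(x_0)$, hence flat, and then concludes by the flatness/$\epsilon$-regularity machinery. Your route through $1$-homogeneous blow-up cones needs a Weiss-type monotonicity formula for the $p$-functional to get homogeneity of blow-ups; no such formula is established in the paper and you do not provide one, so the production of a ``non-planar $1$-homogeneous global minimizer'' is unsupported. Moreover, the final compactness step has the same structural issue as in part (i): a sequence of non-planar singular $p_k$-cones may converge to a \emph{planar} $2$-cone, so to reach a contradiction you need an improvement-of-flatness statement uniform in $p$ near $2$ (i.e.\ exactly the quantitative ingredient behind the paper's part (i)), not merely the $p=2$ classification of Caffarelli--Jerison--Kenig and Jerison--Savin.
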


\begin{proof}
To prove  (i) it suffices, in virtue of Theorem \ref{thm:reg-free-boundary},
to show that the free boundary is NTA, when $\epsilon_0$ is small enough. This, however, is a consequence of Proposition 
\ref{lem:NTA-domain} in Appendix A, by choosing $\epsilon_0$ accordingly.

Turning to sstatement (ii) we recall  that  in these dimensions, and for $p=2$,  free boundaries are locally $C^{1,\alpha}$; for $n=2$ this was shown in \cite{alt1981existence}, for $n=3$ see \cite{CJK}, for $n=4$ see \cite{JS}.
Now for $p \approx 2$, the free boundary has to be close to that of the case $p=2$, and hence flat. More specifically, 
given a free boundary point $x_0$ for the $p$-problem, and $p$ close enough to 2, we have that in $B_r (x_0)$ the free boundaries of both $p$ and 2, have Huasdorff distance $\delta  \ll r $, and in particular the $p$-free boundary is $(\delta/2 )$-flat.
%
%
%
%
%
%
\end{proof}


\appendix
\section{Non-tangentially accessible domain (NTA)}\label{sec:NTA}

We note that in Definition \ref{def:NTA}, the condition $(iii)$ can be replaced by {\it Harnack chain condition} (see \cite{bennewitz2005dimension}),  i.e.,  
\begin{enumerate}
\item[(iii)'] Given $\e>0$, $x_1, x_2\in \Omega$ such that $\dist(x_i,\partial\Omega)\ge \e$, $i=1,2$ and $|x_1-x_2|\ge \tilde C\e$, we can find points $x_1=y_1,y_2,\cdots, y_\ell=x_2$ for which
\begin{enumerate}
\item $B_\e(y_i) \subset \Omega$ for $i=1,\cdots,\ell$.

\item $B_\e(y_i)\cap B_\e(y_{i+1})\ne\emptyset$ for $i=1,\cdots,\ell-1$.

\item The length of chain, $\ell$, depends on $\tilde C$ but not on $\e$.
\end{enumerate}
\end{enumerate}

We need an analogue of Theorem 4.1 in \cite{aguilera1987optimization} to show that the non-coincidence set is NTA.

\begin{lemma}\label{lem:growing-value}
Let $\uu$ be a minimizer and suppose $0<|\uu(x_0)|$. Define 
$\delta=\dist(x_0,\Gamma)$,  $\delta_1=\dist(x_0, \{|\uu|\le \frac12 |\uu(x_0)|_\infty\})$, and suppose $B(x_0,\delta)\subset\Omega$.
Then, there exist universal constants 
$\lambda>1 > \sigma $ such that 
\begin{enumerate}[(i)]
\item
$\sigma\delta\le \delta_1\le \delta$.
\item
For some $y\in \partial B_{\delta_1}(x_0)$, $|\uu(y)|_\infty\ge \lambda |\uu(x_0)|_\infty$.
\end{enumerate}
\end{lemma}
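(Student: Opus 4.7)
The plan is to prove (i) by a direct Harnack and gradient estimate for the $p$-harmonic component realizing $|\uu(x_0)|_\infty$, and to prove (ii) by a blow-up contradiction argument hinging on the strong maximum principle for $p$-harmonic functions.

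For (i), the inclusion $\Gamma\subset\{|\uu|\le\tfrac12|\uu(x_0)|_\infty\}$ is trivial since $|\uu|=0$ on $\Gamma$, so $\delta_1\le\delta$. For the reverse bound, pick $i_*$ with $u^{i_*}(x_0)=|\uu(x_0)|_\infty>0$; by Lemma \ref{rem:coopertive-property}, $u^{i_*}$ is positive $p$-harmonic on the connected component of $\{|\uu|>0\}$ containing $x_0$, hence on $B_\delta(x_0)$. Harnack together with the interior gradient bound of \cite{lewis1983regularity} gives $\sup_{B_{\delta/2}(x_0)}|\nabla u^{i_*}|\le (C/\delta)\,u^{i_*}(x_0)$, so for sufficiently small $\sigma$ and every $x\in B_{\sigma\delta}(x_0)$,
\[
u^{i_*}(x)\ge (1-C\sigma)\,u^{i_*}(x_0)>\tfrac12|\uu(x_0)|_\infty.
\]
Hence $|\uu|>\tfrac12|\uu(x_0)|_\infty$ throughout $B_{\sigma\delta}(x_0)$, giving $\delta_1\ge\sigma\delta$.

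For (ii), argue by contradiction: if no universal $\lambda>1$ works, there exist minimizers $\uu_k$ and points $x_k$ with associated $\delta_k,\delta_{1,k}$ (and, after a subsequence, a common index $i_*$) such that
\[
\max_{\partial B_{\delta_{1,k}}(x_k)}|\uu_k|_\infty\le (1+1/k)\,|\uu_k(x_k)|_\infty.
\]
Combining the nondegeneracy Lemma \ref{lem:nondegeneracy} applied in $B_{\delta_k/2}(x_k)\subset\{|\uu_k|>0\}$ with a Harnack chain for the positive $p$-harmonic function $u^{i_*}_k$ on $B_{\delta_k}(x_k)$, and with the Lipschitz upper bound (Theorem \ref{thm:Liptschitz-regularity}), yields $|\uu_k(x_k)|_\infty\sim \delta_k\sim\delta_{1,k}$. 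Rescale $\tilde\uu_k(y):=\uu_k(x_k+\delta_{1,k}y)/|\uu_k(x_k)|_\infty$: this is a minimizer of the rescaled functional with uniformly bounded coefficient on $B_{\delta_k/\delta_{1,k}}(0)\supseteq B_1$, and $\tilde u^{i_*}_k$ is positive $p$-harmonic there with $\tilde u^{i_*}_k(0)=1$. By the subharmonic maximum principle, $\tilde u^{i_*}_k\le 1+1/k$ on $\overline{B_1}$. Interior $C^{1,\alpha}$-compactness for $p$-harmonic functions provides a subsequential limit $\tilde u^{i_*}_k\to v$ in $C^{1,\alpha}_{\rm loc}(B_1)$, where $v$ is $p$-harmonic, $v(0)=1$, and $v\le 1$ on $B_1$; the strong maximum principle forces $v\equiv 1$ on $B_1$.

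On the other hand, by the very definition of $\delta_{1,k}$ there is $\tilde z_k\in\partial B_1$ with $|\tilde\uu_k(\tilde z_k)|=1/2$. Using the uniform Lipschitz estimate for $\tilde\uu_k$, the radially pulled-back point $(1-\eta)\tilde z_k\in B_1$ satisfies $\tilde u^{i_*}_k\bigl((1-\eta)\tilde z_k\bigr)\le 1/2+L\eta$; passing to the limit along a subsequence with $\tilde z_k\to z\in\partial B_1$ yields $v\bigl((1-\eta)z\bigr)\le 1/2+L\eta$ for every small $\eta>0$. Since $(1-\eta)z\in B_1$, this contradicts $v\equiv 1$ as soon as $L\eta<1/2$. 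The main technical obstacle is the two-sided comparability $|\uu_k(x_k)|_\infty\sim\delta_k$, on which both the compactness of the rescaled sequence and the boundedness of the rescaled coefficient depend.
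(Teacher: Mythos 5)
Your proof is correct, and part (ii) is essentially the paper's argument: the same compactness-plus-strong-maximum-principle contradiction, with the normalization $u^{i_*}(x_k+\delta_{1,k}\,\cdot)/|\uu_k(x_k)|_\infty$ in place of the paper's $v(x)=u^i(x_0+\delta_1 x)/u^i(x_0)$, and with the uniform rescaled Lipschitz bound obtained exactly as in the paper from the nondegeneracy relation \eqref{corol:nondegeneracy} (your $L\le C_0\sqrt m/c_0$ is the paper's $1/(2\sigma)$); your radial pull-in of the near-boundary point $\tilde z_k$ is in fact a slightly more careful treatment of the boundary-value step that the paper handles by asserting $C^{1,\alpha}(\overline{B}_1)$ convergence. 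Part (i) is where you genuinely deviate: the paper gets $\delta_1\ge\sigma\delta$ from the Lipschitz bound combined with nondegeneracy, via $\tfrac12|\uu(x_0)|_\infty\le|\uu(x_0)-\uu(z)|\le C_0\delta_1$ and $|\uu(x_0)|\ge c_0\delta$, yielding the explicit $\sigma=c_0/(2C_0\sqrt m)$; you instead use Harnack plus the interior gradient estimate for the positive $p$-harmonic component $u^{i_*}$ on $B_\delta(x_0)$ to show $|\uu|>\tfrac12|\uu(x_0)|_\infty$ on $B_{\sigma\delta}(x_0)$ directly. Your route buys a $\sigma$ depending only on $n,p$ and avoids nondegeneracy in (i) altogether (though you still need it in (ii)); the paper's route buys the explicit value $\sigma=c_0/(2C_0\sqrt m)$, which is quoted verbatim later (in the proof of Lemma \ref{lem:nondegeneracy-for-components} and in Step 3 of Proposition \ref{lem:NTA-domain}), so if your proof were substituted one would either retain that explicit constant or harmlessly adjust those later references. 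Minor cosmetic points only: your gradient bound on $B_{\delta/2}$ should be stated on a slightly smaller ball (Harnack on $B_{3\delta/4}$ then the gradient estimate), and the touching point satisfies $|\tilde\uu_k(\tilde z_k)|\le 1/2$, which is all you use.
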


\begin{proof}
According to relation \eqref{corol:nondegeneracy},
\[
c_0\delta\le |\uu(x_0)| \le C_0\delta,
\]
and if $u^i(x_0)=\max_{1\le j\le m} u^j(x_0)=|\uu(x_0)|_\infty$,
\[
\frac{c_0\delta}{\sqrt{m}}\le u^i(x_0) \le C_0\delta.
\]
If $z\in \partial \{|\uu|\le \frac12 |\uu(x_0)|_\infty\}$ such that $|z-x_0|=\delta_1$, then
\[
\frac12 u^i(x_0)=\frac12 |\uu(x_0)|_\infty   \le |\uu(x_0)|-|\uu(z)| \le |\uu(x_0)-\uu(z)| \le C_0\delta_1,
\]
where we have used $|\uu(z)|= \frac12|\uu(x_0)|_\infty\le \frac12|\uu(x_0)|$.
Thus $(i)$ holds for $\sigma = \frac{c_0}{2C_0\sqrt m} $.

To see $(ii)$, note that $v(x):=u^i(x_0+\delta_1x)/u^i(x_0)$ is a $p$-harmonic function in  $B_{\delta/\delta_1}$ such that $v(0)=1$, $v(\hat z) \le \frac12$ for some $\hat z\in \partial B_1$. In addition, the Lipschitz constant of $v$ is bounded by $C_0\delta_1/u^i(x_0)\le  C_0\delta/u^i(x_0) \le C_0\sqrt{m}/c_0 = 1/2\sigma$.
We claim that there is a universal constant $\lambda>1$ such that $v(\hat y)\ge \lambda $ for some $\hat y\in \partial B_1$.
Otherwise, we find a sequence of $p$-harmonic functions $v_k$ with
\[
 \norm{v_k}_{L^\infty(B_1)}\le 1+\frac 1k, \quad \norm{\nabla v_k}_{L^\infty(B_1)}\le \frac 1{2\sigma}, \quad 
 v_k(0)=1,\quad v_k(z_k)\le \frac12,
\]
 for some $|z_k|=1$.
Then there is a subsequence converging in $C^{1,\alpha}(\overline B_1)$ to a $p$-harmonic $v_0$ where
$v_0(0)=1$, $v_0(z_0)\le 1/2$, $\norm{v_0}_{L^\infty(B_1)}=1$. This is a contradiction with the maximum principle.
Therefore, we have found $y\in \partial B_{\delta_1}(x_0)$ such that
\[
|\uu(y)|_\infty \ge u^i(y) \ge \lambda u^i(x_0) =\lambda |\uu(x_0)|_\infty.\qedhere
\]
\end{proof}

\begin{lemma}\label{lem:nondegeneracy-for-components}
There exists a universal constant $\tilde c_0$ such that if $x_0\in\partial\{|\uu|>0\}$,  $x_1\in B_{r/2}(x_0)$ and $A_r$ is the connected component of $\{|\uu|>\frac12|\uu(x_1)|_\infty\}\cap B_r(x_0)$ containing $x_1$, then 
\[
\norm{\uu}_{L^\infty(A_r)}\ge \tilde c_0r.
\]
\end{lemma}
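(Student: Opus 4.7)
The strategy is a standard iterative ``ladder'' argument driven by Lemma \ref{lem:growing-value}: starting at $x_1$, I will build a chain of points $y_0=x_1,y_1,y_2,\dots$ along which $|\uu|_\infty$ grows by a geometric factor at each step, and whose consecutive members are joined by balls that remain inside $A_r$. The iteration must halt once the chain is about to leave $B_r(x_0)$, and when it does, the lower bound on the final value, matched against the distance travelled, will force $|\uu(y_N)|_\infty\gtrsim r$.

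\textbf{Construction of the chain.} Set $y_0=x_1$ and $a_k:=|\uu(y_k)|_\infty$. By Lemma \ref{lem:growing-value} applied at $y_k$ (valid as long as $y_k\in\{|\uu|>0\}$, which is guaranteed by $a_k\geq a_0>0$), there exists $y_{k+1}\in\partial B_{\delta_1(y_k)}(y_k)$ with $a_{k+1}\geq \lambda a_k\geq \lambda^{k+1}a_0$, and $\sigma\,\dist(y_k,\Gamma)\leq\delta_1(y_k)\leq\dist(y_k,\Gamma)$. By the very definition of $\delta_1(y_k)$, the open ball $B_{\delta_1(y_k)}(y_k)$ lies in $\{|\uu|_\infty>\tfrac12 a_k\}\subset\{|\uu|>\tfrac12 a_0\}$, so the segment from $y_k$ to $y_{k+1}$ stays in $\{|\uu|>\tfrac12 |\uu(x_1)|_\infty\}$. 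I stop the iteration at the first index $N$ for which the ball $B_{\delta_1(y_N)}(y_N)$ is not contained in $B_r(x_0)$; because $\delta_1(y_k)\geq\sigma\dist(y_k,\Gamma)\geq\sigma a_k/C_0\geq \sigma\lambda^k a_0/C_0$ grows geometrically (using the nondegeneracy/Lipschitz sandwich \eqref{corol:nondegeneracy}), this $N$ is finite. All $y_0,\dots,y_N$ belong to $B_r(x_0)$ and are chained inside $\{|\uu|>\tfrac12|\uu(x_1)|_\infty\}\cap B_r(x_0)$, hence all lie in the connected component $A_r$.

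\textbf{Matching distance against value.} Using \eqref{corol:nondegeneracy} and $|\uu|\leq\sqrt m\,|\uu|_\infty$, one has $\delta_1(y_k)\leq \dist(y_k,\Gamma)\leq |\uu(y_k)|/c_0\leq C\,a_k$ for a universal $C$. The geometric growth $a_k\geq \lambda^{k}a_0$ then gives the telescoping bound
\[
|y_N-x_1|\leq \sum_{k=0}^{N-1}\delta_1(y_k)\leq C\sum_{k=0}^{N-1}a_k\leq C\,a_N\sum_{\ell=1}^{N}\lambda^{-\ell}\leq \frac{C}{\lambda-1}\,a_N.
\]
Since $x_1\in B_{r/2}(x_0)$, this yields $|y_N-x_0|\leq \tfrac{r}{2}+\tfrac{C}{\lambda-1}a_N$. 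On the other hand the stopping criterion gives $\delta_1(y_N)> r-|y_N-x_0|$, and $\delta_1(y_N)\leq C\,a_N$, so
\[
C\,a_N\;\geq\;\delta_1(y_N)\;>\;r-|y_N-x_0|\;\geq\;\frac{r}{2}-\frac{C}{\lambda-1}\,a_N.
\]
Rearranging produces $a_N\geq \tilde c_0\,r$ for a universal constant $\tilde c_0>0$, and since $y_N\in A_r$ and $|\uu(y_N)|\geq|\uu(y_N)|_\infty=a_N$, the claimed bound $\|\uu\|_{L^\infty(A_r)}\geq \tilde c_0 r$ follows.

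\textbf{Main obstacle.} The only non-routine point is the verification that the chain lies in the \emph{same} connected component of $\{|\uu|>\tfrac12|\uu(x_1)|_\infty\}\cap B_r(x_0)$ that contains $x_1$, i.e.\ in $A_r$. This hinges on the observation above that each ball $B_{\delta_1(y_k)}(y_k)$ is contained in $\{|\uu|_\infty>\tfrac12 a_k\}\subseteq\{|\uu|>\tfrac12 a_0\}$ and, by the choice of the stopping index $N$, also in $B_r(x_0)$; consecutive balls overlap at $y_{k+1}$, giving a connected polygonal path from $x_1$ to $y_N$ inside $A_r$. Once this is in hand, the rest is just the geometric-series bookkeeping done above.
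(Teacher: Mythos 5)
Your proof is correct and takes essentially the same route as the paper: the identical chain construction via Lemma \ref{lem:growing-value}, stopped when a ball would leave $B_r(x_0)$, followed by the same geometric-series bookkeeping and the sandwich \eqref{corol:nondegeneracy}, the only difference being that you bound $a_N=|\uu(y_N)|_\infty$ directly instead of first showing $\delta_k\gtrsim r$ as the paper does. Two cosmetic points: the definition of $\delta_1$ puts the balls in $\{|\uu|>\tfrac12 a_k\}$ (Euclidean norm), not $\{|\uu|_\infty>\tfrac12 a_k\}$, which is all you actually need; and if $y_N$ happens to fall on $\partial B_r(x_0)$ it need not lie in $A_r$, but the bound on $\norm{\uu}_{L^\infty(A_r)}$ still follows by continuity along the last segment.
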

\begin{proof}
We use Lemma \ref{lem:growing-value} to inductively define a sequence of points $x_1,x_2,\dots, x_k, x_{k+1}$ so that for $j=1,\dots, k$,
\begin{enumerate}[$(i)$]
\item
$|x_{j+1}-x_j|=\delta_j= \dist(x_j, \{|\uu|\le \frac12 |\uu(x_j)|_\infty\}),$
\item
$|\uu(x_{j+1})|_\infty \ge \lambda |\uu(x_j)|_\infty,$
\item
$B_{\delta_j}(x_j)\subset \{|\uu|>\frac12|\uu(x_1)|_\infty\}.$
\end{enumerate}
By $(ii)$, we know that this process cannot continue indefinitely without stepping out of $B_r(x_0)$. 
So, we stop at the first $k$ for which $B_{\delta_{k+1}}(x_{k+1})\not\subset B_r(x_0)$. 
Also, by Lemma \ref{lem:growing-value}, we know that 
\[
\delta_j \le \dist(x_j,\Gamma)\le \frac{\delta_j}\sigma,
\]
and therefore by \eqref{corol:nondegeneracy}, 
\[
c_0\delta_j \le |\uu(x_j)| \le \frac{C_0\delta_j}{\sigma}.
\]
Now applying  $(ii)$, we obtain  (recall that $\sigma = \frac{c_0}{2C_0\sqrt m} $)
\[
\delta_j \le \frac{\sqrt{m}}{c_0}|\uu(x_j)|_\infty \le  \frac{\sqrt{m}\lambda^{-\ell}}{c_0}|\uu(x_{j+\ell})|_\infty 
\le \frac{\lambda^{-\ell}}{2\sigma^2}\delta_{j+\ell}.
\]
Therefore, 
\[
|x_k-x_0|\le |x_1-x_0|+\sum_{j=1}^{k-1}|x_{j+1}-x_j| \le \frac r2+\sum_{j=1}^{k-1}\delta_j \le
\frac r2+\frac{\delta_k}{2\sigma^2} \sum_{j=1}^{k-1}\lambda^{j-k} \le \frac r2+\frac{\delta_k}{2\sigma^2(\lambda-1)}.
\]
On the other hand, 
\[\begin{split}
\delta_{k+1}=&\dist(x_{k+1}, \{|\uu|\le \frac12 |\uu(x_{k+1})|_\infty\}) \\
\le &\delta_k+\dist(x_{k}, \{|\uu|\le \frac12 |\uu(x_{k+1})|_\infty\}) \\
\le & \delta_k+\dist(x_{k}, \{|\uu|\le \frac12 |\uu(x_{k})|_\infty\}) = 2\delta_k.
\end{split}\]
Since $B_{\delta_{k+1}}(x_{k+1})\not\subset B_r(x_0)$, we get
\[
r\le |x_{k+1}-x_0|+\delta_{k+1} \le |x_k-x_0| + 3\delta_k \le \frac r2 + c\delta_k.
\]
Thus $\gamma r\le \delta_k$ for the universal constant $\gamma$. It necessitates that $B_{\gamma r}(x_k)\subset A_r$ and also
\[
|\uu(x_k)|\ge c_0\dist(x_k,\Gamma)\ge c_0\gamma r.
\]
In particular, 
\[
\max_{A_r}|\uu| \ge c_0\gamma r.\qedhere
\]
\end{proof}

\begin{proposition}\label{connected-coincidence-set}
There is $\e_0>0$ such that for any $p\in (2-\e_0,2+\e_0)$ there is no  global minimizer of $J_0$ (Lemma \ref{lem:blow-up-minimizer}) that $\uu(0)=0$ and $\{|\uu|>0\}\cap B_R$ is disconnected for every $R>0$.
\end{proposition}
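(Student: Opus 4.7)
We argue by contradiction and compactness, reducing to the $p=2$ case where the Alt--Caffarelli--Friedman (ACF) monotonicity formula is available. Assume the statement fails: there exist $p_k\to 2$ and global minimizers $\uu_k$ of $J_0^{p_k}$ (with constant weight $Q(x_0)$) satisfying $\uu_k(0)=0$ and $\{|\uu_k|>0\}\cap B_R$ disconnected for every $R>0$. Theorem~\ref{thm:Liptschitz-regularity} (uniform Lipschitz) and the nondegeneracy of Lemmas~\ref{lem:nondegeneracy} and~\ref{lem:nondegeneracy-for-components}, with constants stable as $p\to 2$, let us extract a subsequential locally uniform limit $\uu_\infty$. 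Continuity of $t\mapsto t^p$ together with weak lower semicontinuity and $L^1_{\mathrm{loc}}$-convergence of the characteristic functions show that $\uu_\infty$ is a global minimizer of $J_0^2$; Hausdorff convergence of free boundaries (Lemma~\ref{lem:blow-up}) combined with uniform nondegeneracy forces $\{|\uu_\infty|>0\}\cap B_R$ to remain disconnected for every $R>0$.

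Now working with $\uu_\infty$ at $p=2$: pick two local components $\Omega^1,\Omega^2$ of $\{|\uu_\infty|>0\}\cap B_{R_0}$ (some small $R_0$) both accumulating at $0$, and select indices $i_1,i_2$ (possibly equal) with $u_\infty^{i_j}>0$ on $\Omega^j$ (available by Lemma~\ref{rem:coopertive-property}). Setting $v:=u_\infty^{i_1}\chi_{\Omega^1}$ and $w:=u_\infty^{i_2}\chi_{\Omega^2}$ yields nonnegative, continuous, $2$-subharmonic functions on $B_{R_0}$ with disjoint supports and $v(0)=w(0)=0$; the free-boundary contribution to the distributional Laplacian is a nonnegative measure by Hopf's lemma. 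The ACF monotonicity formula gives that
\[
\Phi(r)=\frac{1}{r^4}\int_{B_r}\frac{|\nabla v|^2}{|x|^{n-2}}dx\cdot\int_{B_r}\frac{|\nabla w|^2}{|x|^{n-2}}dx
\]
is non-decreasing on $(0,R_0)$; it is bounded above by the Lipschitz estimate and bounded below away from zero by nondegeneracy, so $\Phi(0^+)\in(0,\infty)$.

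Blow up at the origin: $\uu_\infty^r(x):=\uu_\infty(rx)/r$ with $r\to 0^+$ has a subsequential limit $\uu_0$ that is itself a global minimizer of $J_0^2$ (Lemma~\ref{lem:blow-up-minimizer}). From $\Phi_{v_r,w_r}(\rho)=\Phi_{v,w}(r\rho)\to\Phi(0^+)$, the ACF quotient for the blow-up is identically $\Phi(0^+)$, constant in $\rho$. The rigidity part of ACF then forces $v_0=\alpha(x\cdot\nu)_+$ and $w_0=\beta(-x\cdot\nu)_+$ for some unit $\nu$ and positive $\alpha,\beta$. Consequently, the $i_2$-th component $u_0^{i_2}$ of $\uu_0$ equals $\beta(-x\cdot\nu)$ on the half-space $\{x\cdot\nu<0\}$ and is either zero or of the form $\gamma(x\cdot\nu)_+$ on $\{x\cdot\nu>0\}$; in either case it has a genuine kink on the hyperplane $\{x\cdot\nu=0\}$ and is \emph{not} harmonic across it.

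The contradiction is produced by an explicit harmonic-replacement competitor. On any ball $B$ crossing the hyperplane, replace $u_0^{i_2}$ by its harmonic extension $\tilde u$ matching $u_0^{i_2}|_{\partial B}$. The Dirichlet principle gives strictly $\int_B|\nabla\tilde u|^2<\int_B|\nabla u_0^{i_2}|^2$; the strong maximum principle gives $\tilde u>0$ throughout $B$. Thus $\{|\tilde\uu_0|>0\}\cap B=B$, whereas the original positivity set already exhausted $B$ up to the measure-zero hyperplane, so the $Q^2$-term is unchanged. The modified vector $\tilde\uu_0$ therefore strictly lowers $J_0^2$ in $B$ with matching boundary data on $\partial B$, contradicting the global minimality of $\uu_0$. \textbf{The principal obstacle} is the compactness bookkeeping: keeping the disconnectedness-at-every-scale property alive through the joint limit $p_k\to 2$ and the subsequent blow-up depends on uniform-in-$p$ Lipschitz and nondegeneracy estimates, on the cooperative structure of Lemma~\ref{rem:coopertive-property}, and on Hausdorff convergence of the free boundaries; once past this step, the ACF rigidity and the harmonic-replacement competitor go through cleanly.
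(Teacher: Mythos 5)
Your reduction from $p_k\to 2$ to the case $p=2$ (uniform Lipschitz and nondegeneracy, subsequential limit, persistence of disconnectedness) matches the paper's second step. But your treatment of the $p=2$ case has a genuine gap at its very first move: you ``pick two local components $\Omega^1,\Omega^2$ of $\{|\uu_\infty|>0\}\cap B_{R_0}$ both accumulating at $0$'' and then claim the ACF quotient is bounded below away from zero near $r=0$ ``by nondegeneracy'', so that $\Phi(0^+)>0$. The hypothesis only says that $\{|\uu_\infty|>0\}\cap B_R$ is disconnected for \emph{every} $R$; it does not provide a fixed pair of components whose closures both contain the origin. The second component at scale $R$ may drift with $R$ (think of a half-space-like component through $0$ together with separate blobs of size comparable to their distance from the origin at every scale): then no two components meet at $0$, $\Phi(r)\to 0$ as $r\to 0^+$ is entirely possible, the blow-up at the origin can be a single half-plane profile, and your ACF rigidity plus harmonic-replacement contradiction never gets off the ground. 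Everything downstream (constancy of the quotient for the blow-up, the two-plane rigidity, the strict energy decrease of the harmonic replacement with unchanged $Q$-term) is fine \emph{conditionally}, but the lower bound $\Phi(0^+)>0$ is exactly the unproved step.

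This is also where your route genuinely diverges from the paper's, and why the paper's choice avoids the problem: instead of blowing up at the origin with the classical ACF formula, the paper uses the Aguilera--Caffarelli--Spruck refinement (Lemma 4.4 of \cite{aguilera1987optimization}), which exploits the positive density of $\{|\uu_\infty|=0\}$ (Theorem \ref{thm:porosity}) to get that $\Phi(r)/r^{\beta}$ is nondecreasing for some $\beta>0$. Then one only needs $\Phi(r_0)>0$ at a \emph{single} fixed scale $r_0$ where both chosen components are nontrivial -- no information at the origin is required -- and the contradiction comes at large radii from the Lipschitz upper bound $\Phi(r)\le C_0^4$, since $\Phi(r)\ge (r/r_0)^{\beta}\Phi(r_0)\to\infty$. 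If you want to salvage your small-scale argument, you would have to prove (not assume) that two components can be chosen meeting the origin with quantitative nondegeneracy at every small scale, or else replace the second function by the whole remainder of the positivity set and establish its nondegeneracy near $0$ -- neither of which follows directly from the results you cite; switching to the paper's weighted monotonicity and arguing at large scales is the cleaner fix.
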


\begin{proof}
First, for $p=2$. We apply the monotonicity formula and Lemma 4.4 in \cite{aguilera1987optimization}. 
Let $A_1$ and $A_2$ two different connected components of $\{|\uu_\infty|>0\}$. 
According to Lemma \ref{rem:coopertive-property}, we may choose a positive component of vector $\uu_\infty$ for each set $A_i$, $i=1,2$.
Thus we find a function $v$ which is harmonic in $A_1\cup A_2$ and vanishes in $\{|\uu_\infty|=0\}$. 
Also, since $\uu_\infty$ is a minimizer, we know that (Theorem \ref{thm:porosity})
\[
|B_r\setminus (A_1\cup A_2)| \ge c|B_r|.
\]
From here we infer that $\Phi(r)/r^\beta$ is a non-decreasing function of $r$ for some positive constant $\beta>0$ \cite[Lemma 4.4]{aguilera1987optimization}, where
\[
\Phi(r) = \frac1{r^4}\left(\int_{B_r\cap A_1}|\nabla v|^2 |x|^{2-n}\,dx\right)\left(\int_{B_r\cap A_2}|\nabla v|^2 |x|^{2-n}\,dx\right).
\]
Since $v$ is Lipschitz, say with constant $C_0$, we have the bound 
\[
\Phi(r) \leq C_0^4,
\]
and thus 
\[
\Phi(1) \le r^{-\beta}\Phi(r) \le C_0^4 r^{-\beta},
\]
which can not be valid for sufficiently large value of $r$. The contradiction proves the proposition when $p=2$.

To prove the proposition for $p$ close to 2 we argue by contradiction.  
Assume  that there is a sequence of global minimizers $\uu_i$ for $p_i\to 2$ that $\uu(0)=0$ and $\{|\uu_i|>0\}\cap B_R$ is not connected for any $R>0$.

We remark that all our results are stated in a slightly more general form, with constants depending uniformly for all $p\in [1/2, 3]$; see for the details \cite{danielli2006full}. 
In fact, we will have a uniform Lipschitz constant for all $p$ in this compact interval as well as the nondegeneracy constant. 
Also, the constant $c$ in Theorem \ref{thm:porosity} will be uniform. 
Therefore, we may choose a convergence subsequence $\uu_i\ra \tilde\uu_0$. 
By the same reasoning as the proof of Lemma \ref{lem:blow-up-minimizer} we get that $\tilde \uu_0$ is a minimizer for $p=2$.
On the other hand, $\{|\uu_0|>0\}\cap B_R$ is not connected for every $R$ which contradicts the later part of the proof. 
\end{proof}

\begin{lemma}\label{lem:uniform-connectivity}
Let $\e_0>0$ be the constant defined in Proposition \ref{connected-coincidence-set}. 
Then for any $p\in (2-\e_0,2+\e_0)$ there are constants $M\ge 1$ and $R_0>0$ such that if 
$x_0\in \partial\{|\uu|>0\}$, $y\in B_{R_0}(x_0)\cap \partial\{|\uu|>0\}$ and 
$x_1, x_2\in B_{r}(y)$ for some $r<R_0$,  
then
$\{|\uu|>d\}\cap B_{Mr}(y)$ has a connected component containing $x_1$ and $x_2$,
where $d=\frac12\min(|\uu(x_1)|_\infty, |\uu(x_2)|_\infty)$.
\end{lemma}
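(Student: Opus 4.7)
The plan is a contradiction argument based on a two-scale blow-up; the first rescaling normalizes the inner radius $r$ to $1$, and the second rescaling uses Lemma \ref{lem:nondegeneracy-for-components} to ensure that the putatively distinct components persist "macroscopically" in the blow-up limit, thereby contradicting Proposition \ref{connected-coincidence-set}. Suppose the lemma fails. Then for each $k$ one can find a (local) minimizer $\uu_k$, a free-boundary point $y_k$, a radius $r_k<1/k$, points $x_j^k\in B_{r_k}(y_k)$ ($j=1,2$), and $M_k\to\infty$ such that, writing $d_k:=\tfrac12\min(|\uu_k(x_1^k)|_\infty,|\uu_k(x_2^k)|_\infty)$, the points $x_1^k,x_2^k$ lie in distinct connected components $A_j^k$ of $\{|\uu_k|>d_k\}\cap B_{M_kr_k}(y_k)$. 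Rescale by $r_k$: set $\tilde\uu_k(x):=r_k^{-1}\uu_k(y_k+r_kx)$. Each $\tilde\uu_k$ is a local minimizer on an expanding domain with $\tilde\uu_k(0)=0$ and $0\in\partial\{|\tilde\uu_k|>0\}$, the rescaled points $\tilde x_j^k:=(x_j^k-y_k)/r_k\in B_1$ lie in distinct components $\tilde A_j^k$ of $\{|\tilde\uu_k|>\tilde d_k\}\cap B_{M_k}$, and $\tilde d_k:=d_k/r_k$ is bounded above by the Lipschitz estimate (Theorem \ref{thm:Liptschitz-regularity}).

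Choose an intermediate scale $L_k\to\infty$ satisfying $L_k=o(M_k)$ and $r_kL_k\to 0$ (for example $L_k:=\min(\sqrt{M_k},k^{1/4})$) and set $\hat\uu_k(x):=L_k^{-1}\tilde\uu_k(L_kx)$. Then $\hat\uu_k$ is a local minimizer with coefficient $Q(y_k+r_kL_kx)\to Q_*:=Q(\lim y_k)$ uniformly on compacta, $\hat\uu_k(0)=0$, $\hat x_j^k:=\tilde x_j^k/L_k\to 0$, and $\hat x_1^k,\hat x_2^k$ lie in distinct components $\hat A_j^k$ of $\{|\hat\uu_k|>\hat d_k\}\cap B_{M_k/L_k}$, where $\hat d_k:=\tilde d_k/L_k\to 0$ and $M_k/L_k\to\infty$. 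Since $d_k\le\tfrac12|\uu_k(x_j^k)|_\infty$, applying Lemma \ref{lem:nondegeneracy-for-components} in the $\tilde\uu_k$-scale with base point $0$ and radius $L_k$ provides points in $\tilde A_j^k\cap B_{L_k}$ at which $|\tilde\uu_k|\ge\tilde c_0L_k$; rescaling to the $\hat\uu_k$-scale yields $\hat q_j^k\in\hat A_j^k\cap\overline B_1$ with $|\hat\uu_k(\hat q_j^k)|\ge\tilde c_0$.

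By uniform Lipschitz regularity, pass to a subsequence so that $\hat\uu_k\to\hat\uu$ in $C^{0,\alpha}_{\mathrm{loc}}$; a straightforward variant of Lemma \ref{lem:blow-up-minimizer} shows $\hat\uu$ is a global absolute minimizer of the constant-coefficient functional $J_0$ with $\hat\uu(0)=0$, and along the subsequence $\hat q_j^k\to\hat q_j\in\overline B_1$ with $|\hat\uu(\hat q_j)|\ge\tilde c_0>0$. Proposition \ref{connected-coincidence-set} (applicable since $p\in(2-\e_0,2+\e_0)$) supplies some $R_*\ge 1$ for which $\{|\hat\uu|>0\}\cap B_{R_*}$ is connected, hence path-connected as it is open, so one can join $\hat q_1,\hat q_2$ by a continuous curve $\gamma\subset\{|\hat\uu|>0\}\cap B_{R_*}$. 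Since $|\hat\uu|$ attains a positive minimum $\delta$ on the compactum $\gamma$, uniform convergence gives $|\hat\uu_k|>\delta/2>\hat d_k$ on a neighbourhood of $\gamma$ for $k$ large, and $\gamma\subset B_{R_*}\subset B_{M_k/L_k}$. Joining $\hat q_j^k$ to $\hat q_j$ by short segments lying inside the superlevel set (valid by continuity and the lower bound $|\hat\uu_k(\hat q_j^k)|\ge\tilde c_0$), the concatenated path lies in $\{|\hat\uu_k|>\hat d_k\}\cap B_{M_k/L_k}$ and connects $\hat q_1^k\in\hat A_1^k$ with $\hat q_2^k\in\hat A_2^k$, forcing $\hat A_1^k=\hat A_2^k$ — the desired contradiction.

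The main obstacle is the simultaneous calibration of the two rescalings: $L_k$ must grow slowly enough that $r_kL_k\to 0$ (so the rescaled coefficient $Q$ stabilizes and the blow-up is a minimizer of a constant-coefficient $J_0$), fast enough that both $\hat d_k\to 0$ and the reach points from Lemma \ref{lem:nondegeneracy-for-components} remain in a fixed compact set (here $\overline B_1$), and slowly enough that $L_k=o(M_k)$, so that the rescaled components live in domains $B_{M_k/L_k}$ that eventually contain the connectivity radius $R_*$ furnished by Proposition \ref{connected-coincidence-set}. The role of Lemma \ref{lem:nondegeneracy-for-components} is precisely to prevent the two components from collapsing to the single point $0$ under the blow-up, which is where the argument would otherwise fail.
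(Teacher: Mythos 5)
Your overall skeleton --- argue by contradiction, rescale, use Lemma \ref{lem:nondegeneracy-for-components} to keep the two putative components from collapsing to the origin, and finish with Proposition \ref{connected-coincidence-set} --- is the same as the paper's; the only structural difference is that you fold the paper's blow-up followed by a blow-down into a single two-scale limit $\hat{\uu}_k(x)=(r_kL_k)^{-1}\uu(y_k+r_kL_kx)$. The genuine gap is in the final step. Proposition \ref{connected-coincidence-set} only excludes a global minimizer of $J_0$ with $\uu(0)=0$ whose positivity set is disconnected in $B_R$ for \emph{every} $R>0$; applied to your limit $\hat{\uu}$ it therefore yields connectivity of $\{|\hat{\uu}|>0\}\cap B_{R_*}$ for \emph{some} $R_*>0$, with no lower bound whatsoever on $R_*$. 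Your argument, however, needs a connected scale of order at least $1$: the points $\hat q_1,\hat q_2$ produced via Lemma \ref{lem:nondegeneracy-for-components} lie in $\overline B_1$, and you must join them by a path inside $\{|\hat{\uu}|>0\}\cap B_{R_*}$ before transferring that path back to $\hat{\uu}_k$. If connectivity only holds at some small radius (so that $\hat q_1$ or $\hat q_2$ lies outside $B_{R_*}$, while $\{|\hat{\uu}|>0\}\cap B_R$ could a priori be disconnected for all $R\ge 1$), the proposition gives you nothing and no contradiction follows. So the assertion that the proposition ``supplies some $R_*\ge 1$'' is unjustified as written.

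The repair is to prove the stronger fact that $\{|\hat{\uu}|>0\}\cap B_R$ is disconnected for \emph{every} $R>0$, and then contradict Proposition \ref{connected-coincidence-set} directly --- this is precisely what the paper achieves with its blow-down of the first blow-up limit. In your notation: for each fixed $R>0$ apply Lemma \ref{lem:nondegeneracy-for-components} in the $\tilde{\uu}_k$-scale at radius $RL_k$ (admissible for large $k$ since $L_k=o(M_k)$ and the threshold $\tfrac12|\tilde{\uu}_k(\tilde x_j^k)|_\infty\ge\tilde d_k$), obtaining points $\hat p_j^k(R)\in\hat A_j^k\cap\overline B_R$ with $|\hat{\uu}_k(\hat p_j^k(R))|\ge \tilde c_0 R$. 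If in the limit the corresponding points could be joined by a path inside $\{|\hat{\uu}|>0\}\cap B_{2R}$, then the positive minimum of $|\hat{\uu}|$ along the path, uniform convergence, $\hat d_k\to 0$, and your ``short segments'' step would connect $\hat A_1^k$ to $\hat A_2^k$ inside $\{|\hat{\uu}_k|>\hat d_k\}\cap B_{M_k/L_k}$, contradicting their distinctness; hence $\{|\hat{\uu}|>0\}\cap B_{2R}$ is disconnected for every $R>0$, and Proposition \ref{connected-coincidence-set} now applies to $\hat{\uu}$. With this modification (and the variant of Lemma \ref{lem:blow-up-minimizer} for the two-scale limit, which you correctly flag and which the paper itself uses in the proof of Proposition \ref{connected-coincidence-set}), your argument becomes a correct compression of the paper's blow-up/blow-down proof.
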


\begin{proof}
Fix constant $p$ and assume the contrary there are sequences $\partial\{|\uu|>0\}\ni y_i\to x_0$, $x_1^i,x_2^i \in B_{r_i}(y_i)$, $r_i\to0$ and $M_i\to\infty$ such that $x_1^i$ and $x_2^i$ are not connected in $\{|\uu|>d_i\}\cap B_{M_ir_i}(y_i)$ where  $d_i=\frac12\min(|\uu(x_1^i)|_\infty, |\uu(x_2^i)|_\infty)$.
Consider the blowup sequence 
\[
\uu(y_i+r_ix)/r_i\to \uu_0(x), \qquad d_i/r_i\to a,
\] 
Lemma \ref{lem:nondegeneracy-for-components} yields that $\{|\uu_0|>a\}\cap B_R$ has at least two connected components for any $R>0$. 
Note that $a<\infty$ due to the Lipschitz regularity. Now consider a blowdown of $\uu_0$
\[
\uu_0(R_i x)/R_i\to \uu_\infty(x), \qquad R_i\to\infty,
\]
then $\{|\uu_\infty|>0\}\cap B_R$ must have at least two connected components for any  $R>0$.
On the other hand, $\uu_\infty$ is a minimizer of $J_0$; Lemma \ref{lem:blow-up-minimizer}.
This contradicts Proposition \ref{connected-coincidence-set}.
\end{proof}

\begin{proposition}\label{lem:NTA-domain}
Let $\uu$ be a minimizer when  $p\in (2-\e_0,2+\e_0)$ and $\e_0$ is the constant defined in Proposition \ref{connected-coincidence-set}.
Suppose $x_0\in\partial\{|\uu|>0\}$. Then $\{|\uu|>0\}$ is NTA in a neighborhood of $x_0$.
\end{proposition}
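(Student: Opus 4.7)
The plan is to verify, in a small ball $B_{R_0}(x_0)$ where all the preceding estimates apply uniformly, the three conditions of Definition \ref{def:NTA}, with (iii) replaced by the equivalent Harnack chain condition (iii)'. The essential new ingredient, Lemma \ref{lem:uniform-connectivity}, is already the NTA connectivity statement in disguise; what remains is to combine it with nondegeneracy, porosity, and Lipschitz estimates in order to produce proper corkscrews and a quantitative chain of balls.

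For the corkscrew conditions I proceed as follows. The interior corkscrew is immediate: given $x\in\partial\{|\uu|>0\}$ and small $r$, Lemma \ref{lem:nondegeneracy} produces $y\in B_{r/2}(x)$ with $|\uu(y)|\ge cr$, and the Lipschitz bound of Theorem \ref{thm:Liptschitz-regularity} then forces $\dist(y,\partial\{|\uu|>0\})\ge cr/L$. For the exterior corkscrew, I would combine the lower density bound $|B_{r/2}(x)\cap\{|\uu|=0\}|\ge c_1 r^n$ from Theorem \ref{thm:porosity} with the upper bound $\cH^{n-1}(\partial\{|\uu|>0\}\cap B_r(x))\le C_1 r^{n-1}$ from Theorem \ref{thm:Hausdorf-measure-fb}(iii). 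A standard covering estimate controls the Lebesgue measure of the $(r/M)$-tubular neighborhood of $\partial\{|\uu|>0\}$ inside $B_{r/2}(x)$ by $C_2 r^n/M$, so for $M$ large enough (depending only on $c_1,C_2$) a zero-set point $y$ with $\dist(y,\partial\{|\uu|>0\})\ge r/M$ must exist, producing the required ball $B_{r/M}(y)\subset\{|\uu|=0\}$.

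For the Harnack chain (iii)', take $x_1,x_2$ with $\dist(x_i,\partial\{|\uu|>0\})\ge\epsilon$ and $|x_1-x_2|\ge\tilde C\epsilon$. I first pass from each $x_i$ to a point $w_i\in B_{C\epsilon}(x_i)$ with $|\uu(w_i)|_\infty\gtrsim\epsilon$ by iterating Lemma \ref{lem:growing-value} a universally bounded number of times, generating a short chain of $\epsilon$-balls in $\{|\uu|>0\}$ along the way. Next, picking $y\in\partial\{|\uu|>0\}$ near the $w_i$ with $r\sim|x_1-x_2|$, Lemma \ref{lem:uniform-connectivity} places $w_1,w_2$ inside a single connected component $A$ of $\{|\uu|>d\}\cap B_{Mr}(y)$ with $d\gtrsim\epsilon$. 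By Lipschitz continuity of $\uu$, every $z\in A$ satisfies $\dist(z,\partial\{|\uu|>0\})\ge d/L\gtrsim\epsilon$, so $A$ is an open set of geometric thickness $\gtrsim\epsilon$ that can be exhausted by a chain of $\epsilon$-balls of cardinality at most $|B_{Mr}|/\epsilon^n=O((|x_1-x_2|/\epsilon)^n)$, which is exactly what (iii)' allows.

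The main obstacle is condition (iii). Lemma \ref{lem:uniform-connectivity} is only a qualitative connectivity statement, and turning it into a quantitative chain of $\epsilon$-balls forces one to reconcile the a priori mismatched scales $|\uu(x_i)|_\infty$ and $\dist(x_i,\partial\{|\uu|>0\})$ through the growing-value lemma, and then convert the positivity of $|\uu|$ into geometric thickness via the Lipschitz bound. The hypothesis $p\approx 2$ is crucial precisely here, since Lemma \ref{lem:uniform-connectivity} rests on Proposition \ref{connected-coincidence-set}, whose proof in turn uses the ACF monotonicity formula at $p=2$ together with a compactness extension to nearby $p$.
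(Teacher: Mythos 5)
Your proposal is correct in outline and coincides with the paper's proof on two of the three conditions: the interior corkscrew is obtained exactly as in the paper (nondegeneracy from Lemma \ref{lem:nondegeneracy} against the Lipschitz bound of Theorem \ref{thm:Liptschitz-regularity}), and the Harnack chain is the paper's argument as well -- reduce to a nearby free boundary point, invoke Lemma \ref{lem:uniform-connectivity} to place $x_1,x_2$ in one component of $\{|\uu|>d\}\cap B_{Mr}$ with $d\gtrsim\e$, convert the lower bound on $|\uu|$ into geometric thickness $\gtrsim\e$, and count the covering balls by volume, which gives a chain length depending only on $n$, $M$, $\tilde C$. Your preliminary detour through Lemma \ref{lem:growing-value} to manufacture points $w_i$ with $|\uu(w_i)|_\infty\gtrsim\e$ is superfluous: since $\dist(x_i,\partial\{|\uu|>0\})\ge\e$, relation \eqref{corol:nondegeneracy} already gives $|\uu(x_i)|\ge c_0\e$, hence $d\ge c_0\e/(2\sqrt m)$ directly, which is how the paper proceeds (Lemma \ref{lem:growing-value} is needed upstream, in the proof of Lemma \ref{lem:nondegeneracy-for-components}, not here).

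Where you genuinely diverge is the exterior corkscrew. The paper argues by compactness: if corkscrew balls in $\{|\uu|=0\}$ fail at scales $r_j\to0$, a blow-up limit is again a minimizer whose zero set has empty interior in $B_1$, contradicting Theorem \ref{thm:porosity}. You instead propose a direct quantitative argument: the zero set occupies measure $\ge c_1r^n$ in $B_{r/2}(x)$ by porosity, while the $(r/M)$-neighborhood of the free boundary has measure $\le Cr^n/M$, so for large $M$ a zero-set point far from the free boundary exists. This is a legitimate and arguably cleaner route (it yields an explicit $M$), but as written it has a small gap: an upper bound on $\cH^{n-1}(\partial\{|\uu|>0\}\cap B_r)$ alone does not control the Lebesgue measure of a tubular neighborhood (finite Hausdorff measure does not bound Minkowski content). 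You must also use the lower Ahlfors-type bound $\cH^{n-1}(B_\rho(y)\cap\partial\{|\uu|>0\})\ge c\rho^{n-1}$ from Theorem \ref{thm:Hausdorf-measure-fb}(iii): take a maximal family of disjoint balls $B_{r/(2M)}(y_j)$ centered on the free boundary in $B_r(x)$; the lower and upper bounds together limit their number to $CM^{n-1}$, and the dilated balls $B_{2r/M}(y_j)$ cover the $(r/M)$-neighborhood, giving the claimed $Cr^n/M$. With that repair your argument is complete, and both approaches correctly isolate the role of $p\in(2-\e_0,2+\e_0)$: it enters only through Lemma \ref{lem:uniform-connectivity}, i.e.\ through Proposition \ref{connected-coincidence-set} and the ACF monotonicity formula at $p=2$ plus compactness in $p$.
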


\begin{proof}
{\bf Step 1:} (Property  $(i)$, the  corkscrew condition for $\{|\uu|>0\}$.) 
\\
Assume that $M>\norm{\nabla\uu}_{L^\infty(B_1)}/c$ where $c$ is the nondegeneracy constant defined in Lemma \ref{lem:nondegeneracy}. 
If the condition fails at a  point $x\in \partial\{|\uu|>0\}$, then for any $y\in B_r(x)\cap\{|\uu|>0\}$ we must have  $\dist(y,\partial\{|\uu|>0\})\le M^{-1}r$. Thus 
$$|\uu(y)|\le M^{-1}r\norm{\nabla\uu}_{L^\infty(B_1)}\le cr$$
 and by nondegeneracy,  Lemma \ref{lem:nondegeneracy}, $\uu=0$ in $B_{\kp r}(x)$. It contradicts that $x\in \partial\{|\uu|>0\}$.

\medskip

\noindent
{\bf Step 2:} (Property  $(ii)$, the corkscrew condition for $\{|\uu|=0\}$.)\\
Assume the contrary, $\{|\uu|=0\}$ does not satisfy the corkscrew condition in any neighborhood of $x_0$ for any constant $M$. 
Thus there is a sequence $x_j\ra x_0$ and $r_j\ra0$ such that we can not find point $a_{r_j}(x_j)$ with the desired property. 
On the other hand, Theorem \ref{thm:porosity} infer that the interior of $\{|\uu|=0\}$ is nonempty. 
Let $B_{\tau_j}(y_j)$ be the biggest ball inside  $B_{r_j}(x_j)\cap\{|\uu|=0\}$, then we must have $\tau_j/r_j\ra0$.
Now consider the blowup $\uu_j(x):=\uu(x_j+r_j x)/r_j\ra \uu_0(x)$, it will be a minimizer whose coincidence set has no interior in $B_1$. 
This contradicts Theorem \ref{thm:porosity}.

\medskip

\noindent
{\bf Step 3:} (Harnack chain condition.)\\
Suppose that $x_1$ and $x_2$ are such that for some $\tilde C>0$ and $\e>0$ we have 
\[
|x_1-x_2| <\tilde C\e, \qquad B_\e(x_i)\subset\{|\uu|>0\}, \ i=1,2.
\]
We may assume without loss of generality, $\dist(x_1,\partial\{|\uu|>0\})\le \dist(x_2,\partial\{|\uu|>0\})=\delta_0$. 
If $\delta_0\ge \tilde C\e$, then $x_1\in B_{\tilde C\e}(x_2)\subset \{|\uu|>0\}$ and we can easily find the Harnack chain. 
So, consider the case $\delta_0< \tilde C\e$ and choose $x\in \partial\{|\uu|>0\}$ such that $|x-x_2| =\delta_0$. 
Then $x_1,x_2\in B_r(x)$ for $r= 2\tilde C\e$.

By Lemma \ref{lem:uniform-connectivity}, $\{|\uu|>d\}\cap B_{Mr}(x)$ has a connected component containing $x_1$ and $x_2$, where $d=\frac12\min(|\uu(x_1)|_\infty, |\uu(x_2)|_\infty)$.
Now we have a curve $\gamma:[0,1]\ra \{|\uu|>d\}\cap B_{Mr}(x)$ having $x_1$ and $x_2$ as end point. 
For every $t\in[0,1]$ we know that
\[
|\uu(\gamma(t))| \ge d \ge \frac{c_0\e}{2\sqrt{m}},
\]
where $c_0$ comes from \eqref{corol:nondegeneracy}. Hence,
\[
\dist(\uu(\gamma(t)),  \partial\{|\uu|>0\})\ge \sigma\e,
\]
where $\sigma=\frac{c_0}{2C_0\sqrt{m}}$.
Now we can find a sequence $y_1,\cdots, y_\ell$ on the image of $\gamma$ such that 
\[
\gamma[0,1]\subset\bigcup_{i=1}^\ell B_{\sigma\e}(y_i)\subset \{|\uu|>0\}\cap B_{Mr+\sigma\e}(x).
\]
Since $Mr+\sigma\e=(2M\tilde C+\sigma)\e$, the number of balls in covering, $\ell$ can be bounded by a constant depending only on the dimension and $(2M\tilde C+\sigma)/\sigma$, but not on $x_1, x_2$ or $\e$.
\end{proof}



\section{ An approximation lemma}\label{sec:App-approx}

Here we  prove a  lemma, which we  used in Lemma \ref{lem:bndry-viscosity}, and is   generalization of Lemma A.1 in \cite{caffarelli1989harnack} to any $1<p<\infty$ (see also Lemma A.1 in \cite{danielli2003singular}).

\begin{lemma}\label{blow-up:linear-estimate-p-harmonic}
Let $u$ be a nonnegative Lipschitz function in $B_1^+$ and assume that it is $p$-harmonic in $\{u>0\}$ and $u(0)=0$ ($1<p<\infty$). Then it has the asymptotic development 
\[
u(x)=\alpha x_n+o(|x|),\ \ \text{ as }x\ra0,
\] 
for some $\alpha\ge0$, if either
\begin{enumerate}[(i)]
\item
$u$ vanishes on $\{x_n = 0\}$, or

\item $\{x_n >0\} \subset \{u>0\}$.
\end{enumerate}
\end{lemma}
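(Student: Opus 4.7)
The plan is to analyze all subsequential blow-up limits of $u$ at $0$, show that each such limit is linear of the form $\beta x_n$, and then deduce that the slope $\beta$ does not depend on the subsequence.

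\emph{Blow-up compactness and PDE for the limit.} For $r\in(0,1)$ set $u_r(x):=u(rx)/r$. Since $u$ is Lipschitz with $u(0)=0$, the family $\{u_r\}$ is uniformly Lipschitz on compact subsets of $\overline{\{x_n\geq 0\}}$, with $u_r(0)=0$, $u_r\geq 0$ and $u_r(x)\leq L|x|$. Arzel\`a--Ascoli extracts, along any $r_k\downarrow 0$, a subsequential uniform limit $u_0$ with the same properties. By the standard stability of the $p$-Laplacian under uniform convergence of equi-Lipschitz sequences (Caccioppoli estimates together with the interior $C^{1,\alpha}$ theory of \cite{lewis1983regularity} give that $|\nabla u_{r_k}|^{p-2}\nabla u_{r_k}$ passes to $|\nabla u_0|^{p-2}\nabla u_0$ weakly on compact subsets of the positivity set): in case (i), $u_0$ vanishes on $\{x_n=0\}$ and is $p$-harmonic in $\{u_0>0\}$; in case (ii), $\{x_n>0\}\subset\{u_r>0\}$ for every $r$, hence $u_0$ is $p$-harmonic in the full open half-space $\{x_n>0\}$ and nonnegative on $\overline{\{x_n\geq 0\}}$.

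\emph{Liouville-type identification of the blow-up.} I claim that in both cases $u_0(x)=\beta x_n$ on $\{x_n\geq 0\}$ for some $\beta=\beta(\{r_k\})\geq 0$. In case (i), extending $u_0$ by $0$ across $\{x_n=0\}$ yields a globally Lipschitz, nonnegative, $p$-subharmonic function on $\mathbb{R}^n$ with linear growth that is $p$-harmonic on its positivity set and vanishes on $\{x_n\leq 0\}$. The strong minimum principle gives either $u_0\equiv 0$ (take $\beta=0$) or $u_0>0$ in $\{x_n>0\}$; in the latter case $u_0$ solves a zero-Dirichlet problem on the half-space, and boundary $C^{1,\alpha}$ regularity up to a flat boundary together with the linear growth bound force $u_0=\beta x_n$ (every blow-down of $u_0$ satisfies the same hypotheses, giving uniqueness of the normal derivative). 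In case (ii), the boundary trace $g(x'):=u_0(x',0)\geq 0$ is Lipschitz with $g(0)=0$, so $x'=0$ is a global minimum of $g$; performing an iterated blow-up at $0$ of $u_0$ drives $g$ to $0$ on $\{x_n=0\}$ (since any nonnegative Lipschitz function with minimum $0$ at the origin has $g(\rho x')/\rho\to 0$ along any convergent rescaling), while preserving nonnegativity and $p$-harmonicity in $\{x_n>0\}$. This reduces us to case (i).

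\emph{Uniqueness of the slope.} The previous step shows that every subsequential blow-up is of the form $\beta x_n$, so it only remains to prove that $\beta$ is independent of the subsequence; this is what converts subsequential convergence into the asymptotic expansion. If two sequences produced slopes $\beta_1<\beta_2$, then $u(r e_n)/r$ would accumulate at both $\beta_1$ and $\beta_2$; however, interior Harnack chains within $\{u>0\}$ combined with the Lipschitz bound (cf.\ the boundary estimate in Lemma~\ref{lem:bndry-viscosity}) show that the set of accumulation points of $r\mapsto u(re_n)/r$ as $r\downarrow 0$ is connected, and the identification above shows it consists of isolated values $\beta_i$. Hence $\alpha:=\lim_{r\to 0} u(re_n)/r$ exists, $\alpha\geq 0$, and we obtain $u(x)=\alpha x_n+o(|x|)$.

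\emph{Main obstacle.} The principal difficulty is the Liouville step in case (ii): the boundary data on $\{x_n=0\}$ is a priori only Lipschitz and nonnegative, not zero. The reduction to case (i) via an iterated blow-up — exploiting that the trace $g$ attains its minimum at $0$ — is what makes the argument nontrivial. Justifying this step cleanly requires either an explicit boundary regularity theorem for nonnegative $p$-harmonic functions with Lipschitz Dirichlet data vanishing at a point, or a boundary Hopf / boundary Harnack principle for the $p$-Laplacian controlling $u_0/x_n$ near $0$. Once in hand, the remainder of the proof is routine compactness and scaling.
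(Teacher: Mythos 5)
Your proposal has genuine gaps at three points, and the first two are not just missing citations but steps that are false as stated. In case (i), the dichotomy ``either $u_0\equiv 0$ or $u_0>0$ in $\{x_n>0\}$'' does not follow from the strong minimum principle: after extending by zero, $u_0$ is only $p$-\emph{sub}harmonic (it is $p$-harmonic merely on its positivity set), and subharmonic functions satisfy no strong minimum principle. Nothing in your argument excludes blow-up limits whose positivity set is a proper subdomain of the half-space (e.g.\ configurations of the type $\beta(x_n-a)_+$ or positivity sets that are proper subcones), since without a monotonicity formula blow-up limits need not be homogeneous. In case (ii), the reduction to case (i) rests on the claim that a nonnegative Lipschitz trace $g$ with $g(0)=0$ satisfies $g(\rho x')/\rho\to 0$; this is false ($g(x')=|x'|$ is a counterexample). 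The true reason the trace cannot behave like $|x'|$ is the interplay between the Lipschitz bound on $u$ and $p$-harmonicity (such boundary data would force the gradient to blow up logarithmically), and your sketch supplies no argument of that kind, so the case (ii) reduction is broken.

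The third gap is the uniqueness of the slope. Even granting that every subsequential blow-up equals $\beta x_n$, the accumulation set of $r\mapsto u(re_n)/r$ being connected does not help: it could a priori be a whole interval $[\beta_1,\beta_2]$, with each blow-up limit linear of a different slope; your assertion that the possible slopes are ``isolated values'' is unsubstantiated, and this is exactly the point where some monotone or improving quantity is indispensable. The paper proceeds quite differently: part (i) is cited to Lemma A.1 of \cite{danielli2003singular}, and for part (ii) it introduces the nondecreasing maximal minorant slopes $\ell_k:=\sup\{l:\ l x_n\le u \text{ in } B^+_{2^{-k}}\}$, sets $\alpha=\lim_k\ell_k$ (which gives the lower bound for free), and, assuming the upper bound fails along some $x^k\to 0$, compares the rescalings $u(r_kx)/r_k$ with $p$-harmonic barriers $w_k$ in $B_1^+$ vanishing on $\{x_n=0\}$ and carrying a boundary bump near $x^0=\lim x^k/r_k$; the Hopf principle then yields $u\ge(\alpha+\mu/2)x_n$ on a small half-ball, contradicting $\ell_k\le\alpha$. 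Some mechanism of this kind (or a Weiss/ACF-type monotonicity) is what your proposal is missing; without it, subsequential linearity cannot be upgraded to the asserted expansion.
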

\begin{proof}
Part $(i)$ is Lemma A.1 in \cite{danielli2003singular}. The proof of $(ii)$ is also similar by a slight modification.
Let $\ell_k:=\sup\{l: lx_n\leq u(x)\ \text{ in }\ B_{2^{-k}}^+\}$. 
Since $\ell_k$ is a nondecreasing sequence and bounded by the Lipschitz constant of $u$.
Suppose $\alpha=\lim_{k\ra\infty}\ell_k$, then
\[
u(x)\ge \alpha x_n+o(|x|).
\]
If the claim fails there exists a sequence $x^k\ra0$ such that
\[
u(x^k)\ge \alpha x_n^k+\delta_0|x^k|,
\]
for some $\delta_0>0$. Define $u_k(x):=u(r_kx)/r_k$ where $r_k=|x^k|\ra0$. 
We may also assume that $r_k\le 2^{-k}$.
Since $u_k$ are uniformly Lipschitz, we may consider the blowup $u_0=\lim_{k\ra\infty}u_k$, as well as $x^k/r_k\ra x^0$, $|x^0|=1$. 
From the construction we will have $\alpha x_n\leq u_0(x)$ in $B_1^+$, and 
\[
\frac{\delta_0}2+\alpha x_n \le u_0(x)\quad\text{ and }\quad \frac{\delta_0}2+\ell_k x_n \le u_k(x)\quad\text{ in }B_\e(x^0),
\]
for a sufficiently small $\e>0$ and large $k$. 
Let now $w_k$ be a $p$-harmonic function in $B_1^+$ with smooth boundary values 
\[\begin{array}{cl}
w_k=\ell_k x_n&\ \text{ on }\partial B_1^+\setminus B_{\e/2}(x^0),\\[8pt]
w_k=\ell_k x_n + \frac{\delta_0}{4} &\ \text{ on }\partial B_1^+\cap B_{\e/4}(x^0),\\[8pt]
\ell_k x_n\le w_k \le \ell_k x_n + \frac{\delta_0}{4} &\ \text{ on }\partial B_1^+\cap B_{\e/2}(x^0),\\[8pt]
w_k=0&\ \text{ on }\{x_n=0\}\cap B_1.
\end{array}\]
From the comparison principle we will have $ w_k\le u_k$ in $B_1^+$. (Note that $u_k(x)\ge \ell_kx_n$, since $r_k\le 2^{-k}$).
Furthermore, $w_k\ra w_0$  in $C^{1,\sigma}(B_{1/2}^+)$ where $w_0$ is $p$-harmonic with boundary data $w_0=0$ on $\{x_n=0\}$ and $w_0\ge \alpha x_n$ on $\partial B_1^+$. 
By Hopf boundary principle,  
\[
w_0(x) \ge (\alpha+\mu)x_n\quad\text{ in }B_\gamma^+,
\]
for some small $\mu$ and $\gamma$. Thus for $x\in B_\gamma^+$,
\[\begin{split}
u_k(x)\ge w_k(x) &\ge w_0(x) - x_n \norm{\nabla(w_k-w_0)}_{L^\infty(B_{1/2}^+)}\\
 &\ge  \left(\alpha+\mu- \norm{\nabla(w_k-w_0)}_{L^\infty(B_{1/2}^+)}\right)x_n \\
 &\ge \left(\alpha+\mu/2\right)x_n 
\end{split}\]
Now returning to $u$ we get
$u(x)\ge (\alpha+\mu/2) x_n$ in $B_{\gamma/r_k}^+$. 
This is a contradiction with the definition of $\ell_k$ when $k$ is sufficiently large.
\end{proof}

\section*{Declarations}

\noindent {\bf  Data availability statement:} All data needed are contained in the manuscript.

\medskip
\noindent {\bf  Funding and/or Conflicts of interests/Competing interests:} The authors declare that there are no financial, competing or conflict of interests.

%
%
%

\end{document}